\theoremstyle{plain}
\newcommand{\id}{\operatorname{id}}
\newcommand{\im}{\operatorname{Im}}
\newcommand{\Hom}{\operatorname{Hom}}
\newcommand{\Ind}{\operatorname{Ind}}
\newcommand{\ind}{\operatorname{c-Ind}}
\newcommand{\End}{\operatorname{End}}
\newcommand{\cind}{\operatorname{c-Ind}}
\newcommand{\Sp}{\operatorname{Sp}}
\newcommand{\Res}{\operatorname{Res}}
\newtheorem{theorem}{Theorem}[section]
\newtheorem{corollary}[theorem]{Corollary}
\newtheorem{lemma}[theorem]{Lemma}
\newtheorem{remark}[theorem]{Remark}
\newtheorem{proposition}[theorem]{Proposition}
\newtheorem{definition}[theorem]{Definition}
\newtheorem{example}[theorem]{Example}
\author{\large{ Henniart Guy, Vigneras Marie-France }}
\date{\today}
\begin{document}

%\rightline{\textit {}\qquad}
\title{Comparison of compact induction with  parabolic induction}
\maketitle
\abstract{Let $F$ be any non archimedean locally compact field of residual characteristic $p$,   let $G$ be any reductive connected $F$-group and let $K$ be any special parahoric subgroup  of $G(F)$. We choose a parabolic $F$-subgroup $P$ of $G$ with Levi decomposition $P=MN$ in good position with respect to $K$.
Let $C$ be an algebraically closed field of characteristic $p$. We choose an irreducible smooth $C$-representation $V$ of $K$. We investigate the natural intertwiner from the compact induced representation $\ind_{K}^{G(F)}V$ to the parabolically induced representation $\Ind_{P(F)}^{G(F)}( \ind _{M(F) \cap K}^{M(F)}V_{N(F)\cap K})$. Under a regularity condition on $V$, we show that the intertwiner becomes an isomorphism after a localisation at a specific Hecke operator.
When $F$ has characteristic $0$, $G$ is $F$-split and $K$ is hyperspecial, the result was essentially proved  by Herzig. We define the notion of $K$-supersingular irreducible smooth $C$-representation of $G(F)$ which extends Herzig's definition for admissible irreducible  representations and we give a list of $K$-supersingular  irreducible representations which are supercuspidal and conversely a list of supercuspidal representations which are $K$-supersingular.
 \tableofcontents

\section{Introduction} 

Let $F$ be a  non archimedean locally compact field of residual characteristic $p$,   let $G$ be a reductive connnected $F$-group and let 
 $C$ be an algebraically closed field of characteristic $p$. We are interested in smooth admissible $C$-representations of $G(F)$. Two induction techniques are available, compact induction $\ind_{K}^{G(F)} $ from a compact open subgroup $K$ of $G(F)$ and parabolic induction  $\Ind_{P(F)}^{G(F)}$ from a parabolic  subgroup $P(F)$  with Levi decomposition $P(F)=M(F)N(F)$ . Here we want to investigate the interaction between the two inductions. 
 
 More specifically  assume that $G(F)=P(F)K$ and $P(F)\cap K= (M(F)\cap K) (N(F)\cap K)$.
 We construct (Proposition \ref{I0}) for any finite dimensional smooth $C$-representation $V$ of $K$, a canonical intertwiner 
 $$I_{0}: \ \ind_{K}^{G(F)}V \to \Ind_{P(F)}^{G(F)}( \ind _{M(F) \cap K}^{M(F)}V_{N(F)\cap K}) \ , $$
 where $V_{N(F)\cap K}$ stands for the $N(F)\cap K$-coinvariants in $V$, and a canonical algebra homomorphism 
 $$\mathcal S' : \mathcal H(G(F),K,V) \to \mathcal H(M(F),M(F)\cap K,V_{N(F)\cap K})  \ , $$ 
 where as in \cite{HV}, the Hecke algebra  $\mathcal H(G(F),K,V)$ is  $\End_{G(F)}\ind_{K}^{G(F)}V $ seen as an algebra of double cosets of $K$ in $G$, 
 and similarly for $\mathcal H(M(F),M(F)\cap K,V_{N(F)\cap K}) $. By construction
 $$(I_{0}(\Phi ( f ))) (g) = \mathcal S' (\Phi)(I _{0}(f)(g)) \ , $$
 for $f\in \ind_{K}^{G(F)}V, \Phi \in  \mathcal H(G(F),K,V), g\in G(F)$. Let $V^{*}$ be the contragredient representation of $V$. We  constructed in \cite{HV}
 a Satake homomophism
 $$\mathcal S : \mathcal H(G(F),K,V^{*}) \to \mathcal H(M(F),M(F)\cap K,(V^{*})^{N(F)\cap K})  \ , $$  
 and we show that $\mathcal S'$ and $\mathcal S $ are related by a natural anti-isomorphism of Hecke algebras (Proposition \ref{S'}).

We study further $I_{0}$ in the particular case where $K$ a special parahoric subgroup and $V$ is irreducible. Such a $V$ is trivial on the pro-$p$-radical $K_{+}$ of $K$.
The quotient $K/K_{+}$ is the group of $k$-points of a connected reductive $k$-group $G_{k}$, so that we can use the theory of finite reductive groups in natural characteristic.  We write $K/K_{+}=G(k)$. The image  of $P(F)\cap K =P_{0}$ in $G(k)$ is the group of $k$-points of a parabolic subgroup of $G_{k}$. We write $P_{0}/ P_{0}\cap K_{+}= P(k)$, and we use similar notations for $M$ and $N$ and for the opposite parabolic subgroup $\overline P= M \overline N$ (Section \ref{Not}). We choose a maximal $F$-split torus $S$ in $M$ such that $K$  stabilizes a special vertex in the apartment of $G(F)$ associated to $S$. We choose an element $s\in S(F)$ which is central in $M(F)$ and strictly $N$-positive, in the sense that the conjugation by $s$ strictly contracts the compact subgroups of $N(F)$. There a unique Hecke operator $T_{M}$  in 
$\mathcal H(M(F),M_{0},V_{N(k)})$ with support in $ M_{0}s$ and value at $s$ the identity of $V_{N (k)}$.
\begin{proposition} 
(Proposition \ref{local})
The map $\mathcal S'$ is a localisation at $T_{M}$.
\end{proposition}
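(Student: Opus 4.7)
My plan is to realise the localisation statement concretely in three stages: (i) identify the image of $\mathcal{S}'$ as the subalgebra $\mathcal{H}^+ \subseteq \mathcal{H}(M(F),M_0,V_{N(k)})$ of Hecke operators supported on the contracting submonoid
\[
M^+ = \{m \in M(F) : m(N(F)\cap K)m^{-1} \subseteq N(F)\cap K\};
\]
(ii) check that $T_M$ lies in $\mathcal{H}^+$ and that convolution by $T_M$ is injective on the full target algebra; and (iii) show that every double-coset operator $T_{M,m}$ of the target has the form $T_M^{-n} \cdot \mathcal{S}'(\Phi)$ for some $n \geq 0$ and some $\Phi \in \mathcal{H}(G(F),K,V)$. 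These three facts together identify $\mathcal{H}(M(F),M_0,V_{N(k)})$ with the localisation of $\mathcal{S}'(\mathcal{H}(G(F),K,V))$ at $T_M$, which is the assertion.

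For step (i), I would combine the intertwining relation $(I_{0}(\Phi(f)))(g) = \mathcal{S}'(\Phi)(I_{0}(f)(g))$ with the decomposition $G(F)=P(F)K$ and the good-position equality $P(F)\cap K = M_{0}(N(F)\cap K)$. A basis element of $\mathcal{H}(G(F),K,V)$ attached to $KmK$, with $m \in M(F)$, maps under $\mathcal{S}'$ into the $M_{0}mM_{0}$-component of $\mathcal{H}(M(F),M_{0},V_{N(k)})$, and a direct coset-counting computation — coupled with the anti-isomorphism with the Satake map $\mathcal{S}$ from \cite{HV} provided by Proposition \ref{S'} — shows that only $m \in M^+$ occur and that every element of $\mathcal{H}^+$ is reached. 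Step (ii) is then quick: $s$ lies in $M^+$ by the strict $N$-positivity hypothesis and is central in $M(F)$, so the Hecke operator supported on $KsK$ with value $\id_{V}$ at $s$ maps to $T_M$ under $\mathcal{S}'$; convolution by $T_M$ translates the support of any basis element by $s$, which has infinite order modulo $M_{0}$, and is therefore injective.

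Step (iii) is the core localisation step, and it rests on the following combinatorial fact: for every $m \in M(F)$ there exists $n \geq 0$ with $s^{n} m \in M^+$. Indeed, $m(N(F)\cap K)m^{-1}$ is a compact subgroup of $N(F)$, and the strict $N$-positivity of $s$ forces $s^{n}\cdot m(N(F)\cap K)m^{-1}\cdot s^{-n} \subseteq N(F)\cap K$ once $n$ is large enough. Centrality of $s$, together with step (i), then yields $T_{M}^{n}\cdot T_{M,m} = T_{M,s^{n}m} \in \mathcal{S}'(\mathcal{H}(G(F),K,V))$, so $T_{M,m}$ acquires the required expression in the localisation. I expect the main obstacle to be step (i): showing that $\mathcal{S}'$ is genuinely a bijection onto $\mathcal{H}^+$ (and not merely an inclusion) requires careful tracking of the coset decomposition $K = (K\cap\overline N(F))(P(F)\cap K)$ and of the coefficient functions with values in $\End V$ versus $\End V_{N(k)}$; once this is established, steps (ii) and (iii) follow formally from the centrality and the contracting action of $s$.
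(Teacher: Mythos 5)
Your overall architecture (injectivity, plus $T_{M}$ in the image, plus every double coset operator reachable after multiplying by a power of $T_{M}$) is what Definition \ref{defloc} requires, and your step (iii) mechanism --- $s^{n}m$ becomes positive for $n$ large because $s$ strictly contracts the compact subgroups of $N(F)$ --- is exactly the mechanism the paper uses, though the paper applies it at the level of $Z$ rather than $M$. However, there are two genuine gaps. The serious one is step (i): identifying the image of $\mathcal S'$ with the \emph{full} algebra of Hecke operators supported on the positive monoid is not a ``direct coset-counting computation''; it is (the transpose of) the surjectivity half of the mod-$p$ Satake isomorphism, i.e.\ the main theorem of \cite{HV}, and even there the image is computed only for the minimal Levi $Z$ (image of $\mathcal S_{G}$ equal to $\mathcal H(Z(F)^{-}\cap Z_{V^{*}},Z_{0},(V^{*})^{U(k)})$ --- note also the extra constraint $Z_{V^{*}}$, which your $M^{+}$ does not record). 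The paper deliberately avoids computing the image of $\mathcal S'$ at the level of $M$: it uses the transitivity $\mathcal S'_{G}=\mathcal S'_{M}\circ\mathcal S'$ and the injectivity of $\mathcal S'_{M}$ to reduce everything to the level-$Z$ image statement, observing that the image of $\mathcal S'_{M}$ is the localisation of the image of $\mathcal S'_{G}$ at $T_{Z}=\mathcal S'_{M}(T_{M})$ (equation (\ref{4})). As written, your step (i) assumes something at least as strong as the proposition being proved.

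The second gap is in step (ii): there is in general no Hecke operator supported on $KsK$ with value $\id_{V}$ at $s$. The bi-equivariance condition forces the value at $s$ to intertwine the action of $K\cap s^{-1}Ks\subset\mathcal P$ on $V$ with its conjugate by $s$; conjugation by $s$ pushes $N_{0}$ into $N_{0,+}$, which acts trivially on $V$, whereas $N(k)$ does not act trivially on $V$ in general, so $\id_{V}$ fails the condition. The admissible value at $s$ is the projector onto $V^{\overline N(k)}$ (the operator $T_{G}$), and even then the paper proves $\mathcal S'(T_{G})=T_{M}$ only under the hypothesis that $V$ is $M$-coregular (Proposition \ref{xi} iii); without coregularity the preimage of $T_{M}$ under $\mathcal S'$ is produced only abstractly, via the level-$Z$ argument just described. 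So your explicit preimage of $T_{M}$ does not exist, and the fact that $T_{M}$ lies in the image at all must come from the (unproved) step (i) or from the paper's détour through $Z$.
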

This means that $\mathcal S'$ is injective, $T_{M}$ belongs to the image of $\mathcal S'$, and is central  invertible in $\mathcal H(M(F),M _{0},V_{N( k)})$, and 
$$\mathcal H(M(F),M _{0},V_{N( k)})=\mathcal S'(\mathcal H(G(F),K,V))[T_{M}^{-1}].$$
This comes from an analogous property of $\mathcal S$ proved in \cite{HV}. We look now at the localisation $\Theta$ of $I_{0}$ at $T_{M}$ 
$$ \mathcal H(M(F),M _{0},V_{N(k)})\otimes_{\mathcal H(G(F),K,V), \mathcal S'}  \ind_{K}^{G(F)}V \to \Ind_{P(F)}^{G(F)}( \ind _{M(F) \cap K}^{M(F)}V_{N(k)}) \ . $$
Our main theorem is 
\begin{theorem} 
(Theorem \ref{main}) $\Theta$ is  injective, and  $\Theta$ is surjective if and only if 
 $V$ is $M$-coregular.
\end{theorem}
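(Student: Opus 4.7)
The plan is to make $I_{0}$ concrete through the Iwasawa decomposition $G(F) = P(F)K$, introduce a filtration of both sides controlled by support along $N(F)$, and exploit the fact (from Proposition \ref{local}) that $T_{M}$ corresponds via $\mathcal S'$ to a Hecke operator built from the strictly $N$-contracting central element $s$.

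First I would set up the explicit model. A section of $\Ind_{P(F)}^{G(F)}(\ind_{M_{0}}^{M(F)} V_{N(k)})$ is determined by its restriction to $K$, and on $K$ it is a function with values in $\ind_{M_{0}}^{M(F)} V_{N(k)}$ subject to the $P_{0}$-equivariance; evaluating further on $M(F)$ gives a function $K \times M(F) \to V_{N(k)}$. Under this identification, the formula for $I_{0}$ reads, up to normalisation, $I_{0}(f)(k)(m)$ is the image in $V_{N(k)}$ of the $N(F)\cap K$-average of $f(k m \cdot)$. This realises the target as a union of subspaces $F^{n}$ indexed by support in a neighbourhood of $K$ of ``radius $n$'' along $N(F)$, with $I_{0}(\ind_{K}^{G(F)} V) = F^{0}$. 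The pull-back $T \in \mathcal H(G(F), K, V)$ of $T_{M}$ via $\mathcal S'$, which exists in the localisation by Proposition \ref{local}, is supported on $K s K$ and shifts these pieces; crucially, because conjugation by $s$ strictly contracts $N(F)\cap K$, the operator $T$ is injective on each piece.

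With this model, injectivity of $\Theta$ is essentially formal: an element in the kernel can be written $T_{M}^{-n} \otimes f$, whence $T_{M}^{n} I_{0}(f) = 0$ gives $I_{0}(T^{n} f) = 0$; the injectivity of $I_{0}$ on each filtered piece (a direct support argument on $G(F)$) forces $T^{n} f = 0$, so the class vanishes in the tensor product. Surjectivity is the deeper statement. Given $\varphi$ in the RHS, iterating $T_{M}$ eventually pushes $\varphi$ into $F^{0} = I_{0}(\ind_{K}^{G(F)} V)$; the question is whether the corresponding preimage genuinely takes values in $V$, or a priori only in a larger $K$-representation visible in $\ind_{P(k)}^{G(k)} V_{N(k)}$ at the finite level $K/K_{+} = G(k)$. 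Mackey decomposition shows that the $V$-isotypic part of this finite induction reduces to $V$ precisely when $V$ is $M$-coregular, which is the sharp condition in question.

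The main obstacle is the surjectivity half, namely the equivalence between $M$-coregularity and the $K$-type condition on the finite module $\ind_{P(k)}^{G(k)} V_{N(k)}$. For the ``if'' direction one must show that every section in the target can be lifted after sufficiently many applications of $T_{M}$, translating the Mackey decomposition at the level of $G(k)$ into a statement on $F^{n}$; the ``only if'' direction requires exhibiting, whenever $V$ fails to be $M$-coregular, an extra $K$-type in $\ind_{P(k)}^{G(k)} V_{N(k)}$ and promoting it to a global section of the RHS not hit by $\Theta$ even after localisation. Both directions rely on the modular representation theory of the finite reductive group $G(k)$ in its defining characteristic, which is the essential technical input; once these finite-level statements are in hand, the slab filtration transports them to $G(F)$ and closes the argument.
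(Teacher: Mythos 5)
Your overall shape (localise at a Hecke operator built from the strictly contracting element $s$, reduce the essential content to a statement about $K/K_{+}=G(k)$) matches the paper, but two of the load-bearing steps are either unjustified or wrong as stated. First, the injectivity half. You assert that the pull-back $T=(\mathcal S')^{-1}(T_{M})$ is supported on $KsK$. That is precisely the identity $\mathcal S'(T_{G})=T_{M}$, which the paper only proves \emph{under} the $M$-coregularity hypothesis (Proposition \ref{xi} iii); without it the support of $(\mathcal S')^{-1}(T_{M})$ is not controlled, so the contraction argument cannot be run directly on $\ind_{K}^{G(F)}V$. Moreover $I_{0}$ is not injective, not even on "filtered pieces" of its source: its kernel is the entire $T_{\mathcal P}^{\infty}$-torsion submodule, so $I_{0}(T^{n}f)=0$ does not force $T^{n}f=0$; at best it forces $T^{n+m}f=0$ for some $m$ depending on the support of $f$, and proving even that requires an operator with known support. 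The paper's device for both problems is to factor $I_{0}=\zeta\circ\xi$ through $\ind_{\mathcal P}^{G(F)}V_{N(k)}$ and to work with $T_{\mathcal P}$, which \emph{is} supported on $\mathcal P s\mathcal P$ and intertwines unconditionally with $T_{M}$ through $\zeta$; injectivity of $\Theta$ then follows from injectivity of $\xi$ together with the identification of $\Ker\zeta$ with the $T_{\mathcal P}^{\infty}$-torsion (Lemma \ref{binj}, Lemma \ref{n}, Proposition \ref{mainp}).

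Second, and more seriously, the surjectivity half rests on a finite-level statement that is both unproved and, as formulated, not the right one. You claim that "Mackey decomposition shows that the $V$-isotypic part of $\ind_{P(k)}^{G(k)}V_{N(k)}$ reduces to $V$ precisely when $V$ is $M$-coregular." But by Frobenius reciprocity $\Hom_{G(k)}(V,\ind_{P(k)}^{G(k)}V_{N(k)})\simeq\Hom_{M(k)}(V_{N(k)},V_{N(k)})$ is one-dimensional for \emph{every} irreducible $V$, so no multiplicity count can detect coregularity. The criterion that actually governs the problem is about supports of weight spaces, not multiplicities: for which $g\in G(k)$ is the image of $gV^{\overline N(k)}$ in $V_{N(k)}$ nonzero (Proposition \ref{weight} and Corollary \ref{basic}, proved via highest-weight theory after reducing to simply connected derived group). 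This is what yields $\xi\circ T_{K,\mathcal P}=T_{\mathcal P}$ under coregularity (hence surjectivity of $\xi_{T}$ onto the $T$-localisation), and conversely what Proposition \ref{Sxi} extracts from the hypothesis that $T_{\mathcal P}([1,x]_{\mathcal P})$ lies in the image of $\xi$. Since both directions of the equivalence are exactly what you defer to "the essential technical input," and the input you name would not decide them, this half of the theorem is not established by your argument. (The generation statement you use implicitly --- that iterating $T_{M}$ pushes any section of the target into the image --- also needs proof; it is Lemma \ref{gen} combined with Lemma \ref{n}(i).)
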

This result  was essentially proved by Herzig \cite{Her}, \cite{Abe}, when $F$ has characteristic $0$, $G$ is $F$-split and $K$ is hyperspecial.
In the theorem, $\overline P = M \overline N$ is the opposite parabolic subgroup of $P$, and we say that $V$ is $M$-coregular if for $h\in K$ which does not belong to $P_{0}\overline P _{0}$, the image of  $h V^{\overline N(k) }$ in $V_{N(k)}$ is $0$. See Definition \ref{regu} and Corollary \ref{basic} for an equivalent definition. As in Herzig and Abe, we    define in the last chapter  the notion of  a $K$-supersingular irreducible smooth $C$-representation of $G(F)$.  We see our main theorem as the first step towards the classification of irreducible smooth $C$-representations of $G(F)$ in terms of supersingular ones.

To prove the theorem, we follow the method of Herzig and we decompose $I_{0}$ as the composite $I_{0}=\zeta \circ \xi$ of two $G(F)$-equivariant maps, the natural inclusion 
$\xi$ of $\ind_{K}^{G(F)}V$ in $\ind_{K}^{G(F)}\ind_{P(k)}^{G(k)}V$, and 
$$\zeta: \ind_{K}^{G(F)}\ind_{P(k)}^{G(k)}V \to \Ind_{P(F)}^{G(F)}( \ind _{M(F) \cap K}^{M(F)}V_{N(k) }) \ , $$
is a natural map  associated to the quotient map $\ind_{P(k)}^{G(k)}V\to N_{N(k)}$ (see  (\ref{calI}) below).
We write $\mathcal P$ for the parahoric subgroup inverse image of $P(k)$ in $K$ and $T_{\mathcal P}$ for the Hecke operator in $\mathcal H(G(F), \mathcal P, V_{N(k)}) $ of support $\mathcal Ps\mathcal P$ and value at $s$ the identity of $V_{N(k)}$. With no regularity assumption on $V$ we prove
\begin{equation*} \zeta \circ T_{\mathcal P} = T_{M} \circ \zeta \ . 
\end{equation*}
Seeing $\ind_{K}^{G(F)}\ind_{P(k)}^{G(k)}V= \ind_{\mathcal P}^{G(F)}V_{N(k)}$ and  $\Ind_{P(F)}^{G(F)}( \ind _{M(F) \cap K}^{M(F)}V_{N(k) })$ as   $C[T]$-modules via $ T_{\mathcal P} $ and $T_{M}$, the map $\zeta$ is $C[T]$-linear and we prove (Corollary \ref{Tzeta}):

\begin{theorem} The localisation at $T$ of $\zeta$ is an isomorphism.
\end{theorem}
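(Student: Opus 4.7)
I would view $\zeta$ as a map of $C[T]$-modules, via the already established identity $\zeta \circ T_{\mathcal P} = T_M \circ \zeta$, and show that both $\ker\zeta$ and $\mathrm{coker}\,\zeta$ are $T$-torsion; exactness of localisation at $T$ then yields the result.

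The first step is to put $\zeta$ into an explicit form. Induction in stages gives $\ind_K^{G(F)}\ind_{P(k)}^{G(k)}V \cong \ind_{\mathcal P}^{G(F)}V_{N(k)}$, since $\mathcal P$ is the preimage of $P(k)$ in $K$ and $K_{+}$ acts trivially on $V$. On this model $\zeta$ should acquire a concrete summation formula coming from the quotient map $\ind_{P(k)}^{G(k)}V\to V_{N(k)}$ together with the Iwasawa decomposition $G(F)=P(F)K$, schematically
\begin{equation*}
\zeta(f)(g)(m) \;=\; \sum_{n \in N(F)/N_0} f(m n g),
\end{equation*}
the sum being finite by compact support of $f \in \ind_{\mathcal P}^{G(F)}V_{N(k)}$ and taking values in $V_{N(k)}$. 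In this presentation $T_{\mathcal P}$ is essentially right translation by $s$, twisted by the endomorphism of $V_{N(k)}$ induced by $s$, and corresponds under $\zeta$ to the action of $T_M$ on the target.

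For the cokernel, I would use $G(F)=P(F)K$ so that an element $F$ of the target is determined by its restriction to $K$, where $F(k)$ is a compactly supported function on $M_0\backslash M(F)$. Applying $T_M^n$ shifts the inner variable by $s^n$, and since $s$ is strictly $N$-positive the conjugate $s^n M_0 s^{-n}$ absorbs any prescribed compact subgroup of $N(F)$. Hence for $n$ large the translated function factors through the quotient $\ind_{P(k)}^{G(k)}V\to V_{N(k)}$ on each relevant $K$-coset, so that explicit bookkeeping produces an $f\in \ind_{\mathcal P}^{G(F)}V_{N(k)}$ with $\zeta(f)=T_M^n F$. The kernel is handled dually: decomposing the support of $f\in\ker\zeta$ via the parahoric factorisation $\mathcal P = (\mathcal P \cap \overline N(F))\,(\mathcal P \cap M(F))\,(\mathcal P \cap N(F))$ and the $K$-orbits on $P(F)\backslash G(F)$, one argues that on each non-open orbit $T_{\mathcal P}^n f$ is forced to vanish by the contraction, while on the open orbit $\zeta$ is already injective, so $T_{\mathcal P}^n f = 0$ for $n$ large.

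The technical heart is this contraction step: one needs, uniformly as the support of $f$ or $F$ varies, to control how conjugation by $s^n$ contracts the relevant unipotent subgroups into $N_0$. In Herzig's hyperspecial split case this follows from the clean Iwahori decomposition of $K$ and the Bruhat cells of $G(k)$; in the present generality it requires the analogous decomposition of the special parahoric $\mathcal P$ for a possibly non-split $G$, and a careful check that the natural filtrations by powers of $s$ on source and target match compatibly under $\zeta$ independently of any regularity hypothesis on $V$.
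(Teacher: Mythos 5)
Your architecture coincides with the paper's (Proposition \ref{mainp} and Corollary \ref{Tzeta}): make $\zeta$ a $C[T]$-module map via $\zeta\circ T_{\mathcal P}=T_{M}\circ\zeta$ and show that kernel and cokernel are $T$-torsion. But the two steps you defer are exactly where the proof lives, and the heuristics you substitute for them would not survive being made precise. For the kernel, the dichotomy ``on non-open orbits $T_{\mathcal P}^{n}f$ is forced to vanish; on the open orbit $\zeta$ is already injective'' is not the right mechanism: $T_{\mathcal P}$ does not annihilate anything according to the location of its support, it translates supports by $\mathcal P s\mathcal P$; and injectivity of $\zeta$ on functions supported in the open cell $\mathcal P P(F)$ is neither proved nor used. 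The correct statement is Lemma \ref{binj}: $\zeta$ is injective on functions supported in the positive cone $\mathcal P Z(F)^{+_{M}}K$ --- a positivity condition, not an open-cell condition. One then needs that $s^{n}X\subset \mathcal P Z(F)^{+_{M}}K$ for $n$ large, for any $X$ with finite image in $\mathcal P\backslash G(F)$ (Lemma \ref{n}(ii)); applying $T_{\mathcal P}^{n}$ moves $\Supp(f)$ into that cone, where $\zeta(T_{\mathcal P}^{n}f)=T_{M}^{n}\zeta(f)=0$ forces $T_{\mathcal P}^{n}f=0$. That lemma is the nontrivial input, and in this generality it is proved with the generalized Tits system $(G(F),\mathcal B,\mathcal N(F))$ and the specialness of $K$, not merely with the Iwahori decomposition of $\mathcal P$.

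For the cokernel, the contraction you invoke is vacuous: $s$ is central in $M(F)$, so $s^{n}M_{0}s^{-n}=M_{0}$ absorbs nothing. More seriously, $T_{M}$ acts only on the values $F(g)\in\ind_{M_{0}}^{M(F)}V_{N(k)}$ and not on the variable $g$, so applying $T_{M}^{n}$ cannot by itself repair the failure of $F$ to be locally constant at scale $\overline N_{0,+}$ along $P(F)\backslash G(F)$, which is what membership in the image of $\zeta$ amounts to. The paper's route is different: the $s^{-n}\overline N_{0,+}s^{n}$ form a neighborhood basis in $P(F)\backslash G(F)$ (Lemma \ref{n}(i)), whence the target is generated as a $G(F)$-representation by the family $T_{M}^{n}\zeta([1,\overline v]_{\mathcal P})$, $n\in\mathbb Z$ (Lemma \ref{gen}); writing $F$ as a finite sum of translates of these generators, $T_{M}^{N}F$ lies in $\im\zeta$ once $N$ clears the negative exponents. (A minor point: your display for $\zeta$ has the coset on the wrong side --- left $\mathcal P$-equivariance of $f$ and the triviality of $N(k)$ on $V_{N(k)}$ make a sum over $N_{0}\backslash N(F)$ well defined, not one over $N(F)/N_{0}$.)
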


To study $\xi$, we consider the Hecke operator $T_{G}$ in $ \mathcal H(G(F),K,V) $  with support $KsK$ and value at $s$ the natural projector $V\to V^{\overline N(k)}$, and the Hecke operator $T_{K,\mathcal P}$ from $\ind_{\mathcal P}^{G(F)}V_{N(k)}$ to $\ind_{K}^{G(F)}V$ of support $K s \mathcal P$  and value at $s$ given by the natural isomorphism $V_{N(k)} \to V^{\overline N(k)}$. With no regularity assumption on $V$ we prove
\begin{equation*}  T_{K,\mathcal P} \circ \xi = T_{G}   \ . 
\end{equation*}
Assuming that $V$ is $M$-coregular we prove:
\begin{align*} \xi \circ T_{K,\mathcal P} & = T_{\mathcal P}   
\\
\mathcal S' (T_{G})&= T_{M} \ . 
\end{align*}
Seeing $\ind_{K}^{G(F)}V$ as a $C[T]$-module via $T_{G}= (\mathcal S' )^{-1}(T_{M})$, the map $\xi$ is $C[T]$-linear and :

\begin{theorem} The localisation at $T$ of $\xi$ is injective; it is an isomorphism if and only if $V$ is $M$-coregular.
\end{theorem}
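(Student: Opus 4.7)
The plan is to take the three identities announced just before the theorem ($T_{K,\mathcal P} \circ \xi = T_G$ unconditionally, and $\xi \circ T_{K,\mathcal P} = T_{\mathcal P}$ together with $\mathcal S'(T_G) = T_M$ under $M$-coregularity) as formal input, and to treat $T_{K,\mathcal P}$ as a partial inverse of $\xi$ which becomes a two-sided inverse only after inverting $T$. First I would combine the first two identities to get $\xi \circ T_G = \xi \circ T_{K,\mathcal P} \circ \xi = T_{\mathcal P} \circ \xi$, which is the $C[T]$-linearity of $\xi$ needed for the localised map $\xi_{\mathrm{loc}}$ to be well-defined; then a quick induction on $m$, using this linearity and $T_{K,\mathcal P} \circ \xi = T_G$, gives the key relation $T_{K,\mathcal P} \circ T_{\mathcal P}^m \circ \xi = T_G^{m+1}$ for all $m \geq 0$.

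Injectivity of $\xi_{\mathrm{loc}}$ then follows at once: a general element of the source localisation is of the form $T^{-m} \otimes f$ with $f \in \ind_K^{G(F)} V$, and the condition $\xi_{\mathrm{loc}}(T^{-m} \otimes f) = 0$ unwinds to $T_{\mathcal P}^k \xi(f) = 0$ for some $k \geq 0$; applying $T_{K,\mathcal P}$ and using the key relation gives $T_G^{k+1} f = 0$, so $T^{-m} \otimes f$ was already zero in the source. For surjectivity under $M$-coregularity, given $T^{-m} \otimes g$ in the target localisation I would set $f = T_{K,\mathcal P}(g) \in \ind_K^{G(F)} V$; then $\xi(f) = T_{\mathcal P} g$ by the second identity, so $T^{-m} \otimes g = T^{-m-1} \otimes T_{\mathcal P} g = T^{-m-1} \otimes \xi(f) = \xi_{\mathrm{loc}}(T^{-m-1} \otimes f)$.

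For the converse, namely that surjectivity of $\xi_{\mathrm{loc}}$ forces $V$ to be $M$-coregular, I would invoke the decomposition $I_0 = \zeta \circ \xi$ together with the preceding theorem ($\zeta_{\mathrm{loc}}$ is an isomorphism with no regularity hypothesis) and Theorem \ref{main} (which states that $\Theta = (I_0)_{\mathrm{loc}}$ is surjective iff $V$ is $M$-coregular). Since $\Theta = \zeta_{\mathrm{loc}} \circ \xi_{\mathrm{loc}}$ and $\zeta_{\mathrm{loc}}$ is invertible, surjectivity of $\xi_{\mathrm{loc}}$ is equivalent to surjectivity of $\Theta$, and hence to $M$-coregularity.

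With the three Hecke-operator identities granted, everything above is a formal manipulation of tensor products over $C[T]$. The main obstacle therefore lies not in the deduction but in establishing the identities themselves, especially $\xi \circ T_{K,\mathcal P} = T_{\mathcal P}$: this will be a concrete double-coset calculation on basis functions supported on $Ks\mathcal P$, in which one must show that the contribution of $h V^{\overline N(k)}$ to $V_{N(k)}$ vanishes whenever $h \in K \setminus P_0\overline P_0$, and this vanishing is exactly the $M$-coregularity condition, so it is also precisely where the regularity hypothesis enters in a decisive way.
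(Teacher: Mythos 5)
Your ``if'' direction is essentially the paper's: granting $T_{K,\mathcal P}\circ\xi=T_{G}$ and, under $M$-coregularity, $\xi\circ T_{K,\mathcal P}=T_{\mathcal P}$ and $\mathcal S'(T_{G})=T_{M}$, the formal localisation argument you give for surjectivity and injectivity is exactly what Proposition \ref{mainp} and Corollary \ref{Tzeta} do. The genuine gap is the ``only if'' direction, which as you have set it up is circular. In the paper the logical order is the reverse of yours: Theorem \ref{main} (surjectivity of $\Theta$ iff $M$-coregularity) is \emph{deduced from} the present statement about $\xi$ together with the invertibility of $\zeta_{T}$; its ``only if'' half has no independent proof you could appeal to. The actual content of the converse is Proposition \ref{Sxi}: assuming $T_{\mathcal P}([1,x]_{\mathcal P})$ lies in the image of $\xi$ for all $x\in V_{N(k)}$ (after replacing $s$ by a power of $s$), one shows that the support condition forces any preimage to be $f=\sum_{i}(s\overline n_{i})^{-1}[1,\varphi(x)]_{K}$ with $\varphi(x)\in V^{\overline N(k)}$, and then the vanishing of $\xi(f)$ outside $\mathcal Ps\mathcal P$ forces $\overline{k\varphi(x)}=0$ for all $k\in K\setminus\mathcal P\overline N_{0}$; by Corollary \ref{basic} and Remark \ref{rregu} this is precisely $M$-coregularity. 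That computation is the heart of the converse and is absent from your proposal.

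A second, smaller gap concerns the first clause: injectivity of the localised map is asserted \emph{unconditionally}, but your entire framework --- the $C[T]$-linearity $\xi\circ T_{G}=T_{\mathcal P}\circ\xi$, the relation $T_{K,\mathcal P}\circ T_{\mathcal P}^{m}\circ\xi=T_{G}^{m+1}$, and the identification of the source action of $T$ with $T_{G}$ rather than with $(\mathcal S')^{-1}(T_{M})$ --- rests on $\xi\circ T_{K,\mathcal P}=T_{\mathcal P}$ and hence on $M$-coregularity. For general $V$ one only has $\xi\circ(\mathcal S')^{-1}(T_{M})=T_{\mathcal P}\circ\xi$ modulo $T_{\mathcal P}^{\infty}$-torsion, and the paper obtains injectivity from the injectivity of $\xi$ itself (Lemma \ref{inj}) together with the identification of $\ker\zeta$ with the $T_{\mathcal P}^{\infty}$-torsion part (Proposition \ref{mainp}). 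Your closing paragraph correctly locates where coregularity enters the identity $\xi\circ T_{K,\mathcal P}=T_{\mathcal P}$, but that identity is legitimately an input from Proposition \ref{xi}; what cannot be borrowed is the converse implication.
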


Our main theorem follows.

\bigskip A motivation for our  work is the notion of $K$-supersingularity for an irreducible smooth $C$-representation $\pi$ of $G(F)$ (that we do not suppose admissible).

\begin{definition} We say that $\pi$ is $K$-supersingular when
$$ \mathcal H(M(F),M _{0},V_{N(k)})\otimes_{\mathcal H(G(F),K,V), \mathcal S'} \Hom_{G(F)} (\ind_{K}^{G(F)}V  ,\pi)=0  $$
for any irreducible smooth $C$-representation $V$ of $K$ and any standard Levi subgroup $M\neq G$.
\end{definition}

Hence $\pi$ is $K$-supersingular when the localisations at $T_{M}$ of 
$$\Hom_{G(F)} (\ind_{K}^{G(F)}V  ,\pi)$$
are $0$ for all $V$ and all $M\neq G$.

When $\pi$ is admissible,  this definition is equivalent to : 
No character of the center $\mathcal Z(G(F),K,V)$ of $\mathcal H(G(F),K,V)$ contained in $\Hom_{G(F)} (\ind_{K}^{G(F)}V  ,\pi)$ extends via $ \mathcal S'$  to a character of  $ \mathcal Z(M(F),M _{0},V_{N(k)})$ for all $V\subset \pi|_{K}, M\neq G$. 

Equivalently: 
The   localisations at $T_{M}$ of   the characters of $\mathcal Z(G(F),K,V)$ contained in $\Hom_{G(F)} (\ind_{K}^{G(F)}V  ,\pi)$  are $0$  for all $V\subset \pi|_{K}, M\neq G$.  

Herzig and Abe when $G$ is $F$-split, $K$ is hyperspecial and the characteristic of $F$ is $0$ (\cite{Her} Lemma 9.9),  used this property  to define $K$-supersingularity. 

\bigskip The properties of $K$-supersingularity and of supercuspidality (not  being a subquotient of 
$\Ind_{P(F)} ^{G(F)} \tau$ for some irreducible smooth $C$-representation $\tau$ of $M(F)\neq G(F)$) are equivalent  when $G$ is $F$-split, $K$ is hyperspecial and the characteristic of $F$ is $0$.  With the main theorem, we obtain a partial result in this direction in our general case.

\begin{theorem} Let $\pi$ be an irreducible smooth $C$-representation of $G(F)$.

i. \  If $\pi$ is isomorphic to a subrepresentation or is an admissible quotient of $\Ind_{P(F)}^{G(F)}\tau$  as above,  then $\pi$ is not $K$-supersingular.

ii. \  If $\pi$ is admissible and 
 \begin{equation}\label{nonvanish}
 \mathcal H (M(F),M_{0},V_{N(k)}) \otimes_{\mathcal H (G(F),K,V), \mathcal S'} \Hom_{G(F)}(\ind_{K}^{G(F)}V,\pi)  \neq 0
 \end{equation}
for some $L$-coregular irreducible subrepresentation $V$ of $\pi|_{K}$ and some standard Levi subgroups $M \subset L \neq G $,  then $\pi$ is not supercuspidal.

\end{theorem}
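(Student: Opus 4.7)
My plan is to prove both parts by parallel arguments that combine the intertwining identity $I_{0}(\Phi(f))(g)=\mathcal S'(\Phi)(I_{0}(f)(g))$ with the main theorem (or its analogue for a larger Levi), and throughout exploit that $T_{M}$ is central and invertible in $\mathcal H(M(F),M_{0},V_{N(k)})$.

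\emph{Part (i), subrepresentation case.} Pick any irreducible smooth $C$-subrepresentation $V$ of $\pi|_{K}$. Iterated Frobenius reciprocity, via the Iwasawa decomposition $G(F)=P(F)K$ and the triviality of $N(F)$ on $\tau$, yields a canonical isomorphism
\[
\Hom_{G(F)}\bigl(\ind_{K}^{G(F)}V,\Ind_{P(F)}^{G(F)}\tau\bigr)\;\simeq\;\Hom_{M(F)}\bigl(\ind_{M_{0}}^{M(F)}V_{N(k)},\tau\bigr),
\]
which, by the displayed intertwining identity, is equivariant for the right $\mathcal H(G(F),K,V)$-action via $\mathcal S'$, the target being a right $\mathcal H(M(F),M_{0},V_{N(k)})$-module. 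The composite $\ind_{K}^{G(F)}V\to\pi\hookrightarrow\Ind_{P(F)}^{G(F)}\tau$ is nonzero and maps to a nonzero $M(F)$-morphism on the right; since $T_{M}$ is central invertible there, this element is unchanged by localisation at $T_{M}$, and the $\mathcal H$-equivariant injection $\Hom_{G(F)}(\ind_{K}^{G(F)}V,\pi)\hookrightarrow\Hom_{G(F)}(\ind_{K}^{G(F)}V,\Ind_{P(F)}^{G(F)}\tau)$ shows $1\otimes(V\hookrightarrow\pi)$ is nonzero in the tensor product defining $K$-supersingularity. In the admissible-quotient subcase, where $f\colon\Ind_{P(F)}^{G(F)}\tau\twoheadrightarrow\pi$ is given, I would appeal to Emerton's second adjunction $\Hom_{G(F)}(\Ind_{P(F)}^{G(F)}\tau,\pi)\simeq\Hom_{M(F)}(\tau,\Ord_{\overline P(F)}\pi)$ (available for admissible $\pi$) to produce a nonzero $M(F)$-map $\tau\to\Ord_{\overline P(F)}\pi$ and run the same scheme for an appropriate irreducible $K$-type $V$ of $\pi|_{K}$; the finite-dimensionality of $\Hom_{K}(V,\pi)$ (from admissibility) ensures that central localisation at $T_{M}$ agrees with the Fitting decomposition of the action of an element of $\mathcal H(G(F),K,V)$ sent by $\mathcal S'$ to a power of $T_{M}$.

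\emph{Part (ii).} Under the non-vanishing hypothesis, the transitivity $\mathcal S'_{G,M}=\mathcal S'_{L,M}\circ\mathcal S'_{G,L}$ (inherited from the corresponding transitivity of $\mathcal S$ proved in \cite{HV}) factors the tensor product and forces the localisation at $T_{L}$ of $\Hom_{G(F)}(\ind_{K}^{G(F)}V,\pi)$ to be nonzero as well. Since $V$ is $L$-coregular, the main theorem applied to the pair $(L,Q_{L}=LN_{L})$ in place of $(M,P)$ provides the isomorphism
\[
\Theta_{L}\colon\mathcal H(L(F),L_{0},V_{N_{L}(k)})\otimes_{\mathcal H(G(F),K,V),\mathcal S'_{L}}\ind_{K}^{G(F)}V\;\xrightarrow{\;\sim\;}\;\Ind_{Q_{L}(F)}^{G(F)}\bigl(\ind_{L_{0}}^{L(F)}V_{N_{L}(k)}\bigr).
\]
Applying $\Hom_{G(F)}(-,\pi)$ to $\Theta_{L}$ and using that $\Hom_{K}(V,\pi)$ is finite-dimensional by admissibility (so the Hom-tensor adjunction on this finite module coincides with the central localisation at $T_{L}$), the hypothesis becomes $\Hom_{G(F)}\bigl(\Ind_{Q_{L}(F)}^{G(F)}\ind_{L_{0}}^{L(F)}V_{N_{L}(k)},\pi\bigr)\neq 0$. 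Irreducibility of $\pi$ makes any such morphism surjective; the finite generation of $\ind_{L_{0}}^{L(F)}V_{N_{L}(k)}$ as an $L(F)$-representation and exactness of parabolic induction then allow a Jordan--Hölder/Zorn argument producing an irreducible smooth $C$-representation $\sigma$ of $L(F)$ with $\pi$ a subquotient of $\Ind_{Q_{L}(F)}^{G(F)}\sigma$; since $L\neq G$, $\pi$ is not supercuspidal.

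\emph{Main obstacle.} I expect the trickiest step to be the admissible-quotient subcase of (i): one has no embedding of $\pi$ into an induced representation, so the Frobenius-reciprocity argument of the subrepresentation case does not apply directly. The detour through Emerton's $\Ord$-functor requires a careful compatibility check between that adjunction and the Satake operator $\mathcal S'$ and Hecke operator $T_{M}$, which I suspect will be the main technical content of that subcase.
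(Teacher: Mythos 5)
Your subrepresentation case of (i) matches the paper's argument (Frobenius reciprocity plus Proposition \ref{par}/\ref{I0}, and the invertibility of $T_{M}$ on the target) and is fine. The other two cases each have a genuine gap. For the admissible-quotient case of (i), the detour through $\Ord_{\overline P}$ is a dead end as set up: a nonzero map $\tau\to\Ord_{\overline P(F)}\pi$ gives no handle on the localisation at $T_{M}$ of $\Hom_{G(F)}(\ind_{K}^{G(F)}V,\pi)$ for any $V$, and you do not say which $V$ to take. The paper's route avoids $\Ord$ entirely here: choose an irreducible $W$ of $M_{0}$ with $\tau$ a quotient of $\ind_{M_{0}}^{M(F)}W$, take $V$ to be the unique \emph{$M$-coregular} irreducible representation of $G(k)$ with $V_{N(k)}\simeq W$ (Proposition \ref{wi}), apply Theorem \ref{main} to identify $\Ind_{P(F)}^{G(F)}(\ind_{M_{0}}^{M(F)}W)$ with the $T_{M}$-localisation $B\otimes_{A}X$ of $X=\ind_{K}^{G(F)}V$, so that $\Hom_{G(F)}(B\otimes_{A}X,\pi)\neq 0$, and then use the elementary fact (Remark \ref{cclaim}, which needs only that $\Hom_{G(F)}(X,\pi)$ is finite dimensional, i.e.\ admissibility) that this forces $B\otimes_{A}\Hom_{G(F)}(X,\pi)\neq 0$. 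That choice of $V$ and that elementary lemma are the missing ideas.

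For part (ii), the fatal step is the last one: the ``Jordan--H\"older/Zorn argument'' does not exist. The representation $\rho=\ind_{L_{0}}^{L(F)}V_{N'(k)}$ has infinite length in general (already for $L=Z$ a torus), and in characteristic $p$ one cannot pass from ``$\pi$ is an irreducible quotient of $\Ind_{Q(F)}^{G(F)}\rho$'' to ``$\pi$ is a subquotient of $\Ind_{Q(F)}^{G(F)}\sigma$ for some irreducible subquotient $\sigma$ of $\rho$''; exactness of parabolic induction gives nothing here because $\rho$ admits no finite filtration with irreducible quotients. The paper instead uses admissibility to extract a \emph{simple} right $\mathcal H(G(F),K,V)$-submodule $\mathcal N$ of $\Hom_{G(F)}(\ind_{K}^{G(F)}V,\pi)$ on which $\mathcal S'^{-1}(T_{L})$ acts invertibly, so that $\mathcal N$ is the restriction of a simple $\mathcal H(L(F),L_{0},V_{N'(k)})$-module; by Theorem \ref{main} ($V$ being $L$-coregular) $\pi$ is then a quotient of $\Ind_{Q(F)}^{G(F)}\sigma$ with $\sigma=\mathcal N\otimes_{\mathcal H(L(F),L_{0},V_{N'(k)})}\ind_{L_{0}}^{L(F)}V_{N'(k)}$, which has a \emph{central character}; finally Proposition \ref{tau} — Emerton's adjunction for $\Ord_{\overline Q}$ together with the fact that a nonzero admissible representation contains an admissible irreducible subrepresentation — replaces $\sigma$ by an irreducible $\tau$. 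So $\Ord$ is indispensable precisely where you dropped it, and dispensable where you invoked it. (Your intermediate step, deducing $\Hom_{G(F)}(\Ind_{Q(F)}^{G(F)}\rho,\pi)\neq 0$ from the nonvanishing of the localised Hom via the Fitting decomposition of the finite-dimensional module, is correct but should be written out: one needs the compatible system $\varphi_{n}=\varphi\,\tau_{L}^{-n}$ on the $\tau_{L}$-invertible part to define a map out of the colimit $B\otimes_{A}X$.)
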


 \section{Generalities on the Satake homomorphisms}
 In this first chapter we consider a rather general situation, where $C$ is any field. We consider a locally profinite group $G$, an open subgroup $K$ of $G$ and a closed subgroup $P$ of $G$ satisfying ``the Iwasawa decomposition'' $G=KP$. We choose a  smooth $C[K]$-module $V$. As in \cite{HV}, assume that $P$ is the semi-direct product   of a closed invariant subgroup $N$ and of a closed subgroup  $M$, and that $K$ is the semi-direct product 
of $K\cap N$ by $K\cap M$. We also  impose  the assumptions 
 
 (A1) \ Each double coset $KgK$ in $G$ is the union of a finite number of cosets $Kg'$ and the union of a finite number of cosets $g'' K$ (the first condition is equivalent to the second by taking the inverses).

 (A2) \ $V$ is a finite dimensional $C$-vector space.

 \bigskip The smooth $C[K]$-module $V$ gives rise to a compactly induced representation $\ind_{K}^{G}V$ and a smooth $C[P]$-module $W$ gives rise to the full  smooth induced representation $\Ind_{P}^{G}W$. We consider the space of intertwiners
 $$
 \mathcal J:= \Hom_{G}(\ind_{K}^{G}V, \Ind_{P}^{G}W) \ . 
 $$
 By Frobenius reciprocity for compact induction (as $K$ is open in $G$), the $C$-module 
 $ \mathcal J $
 is canonically isomorphic to  $\Hom_{K}( V, \Res_{K}^{G}\Ind_{P}^{G}W)$; to an intertwiner $I$ we associate the function  $v\mapsto I[1,v]_{K}$  where $[1,v]_{K}$ is the function in $\ind_{K}^{G}V$ with support $K$ and value $v$ at $1$. By the Iwasawa decomposition and the hypothesis that $K$ is open in $G$ , we get by restricting functions to $K$ an isomorphism  of  $C[K]$-modules from $\Res_{K}^{G}\Ind_{P}^{G}W$ onto $\Ind_{K\cap P}^{K}(\Res_{K\cap P}^{P}W)$. Using now Frobenius reciprocity for the full smooth induction  $\Ind_{K\cap P}^{K} $ from $P\cap K$ to $K$, we finally get a canonical $C$-linear isomorphism 
 $$
 \mathcal J \simeq
 \Hom_{P\cap K}(V,W)$$  (we now omit mentionning the obvious restriction functors in the notation); this map associates to an intertwiner  $I$ the function $v \mapsto (I[1,v]_{K})(1)$. 
 
 \bigskip We could have proceeded differently, first applying Frobenius reciprocity to $\Ind_{P}^{G}W$,  getting $
 \mathcal J  \simeq \Hom_{P}(\ind_{K}^{G}V,  W)$,  then identifying  $\Res_{P}^{G}\ind_{K}^{G}V$ with $\ind_{K\cap P}^{P}  V$, 
 and finally applying Frobenius reciprocity to $\ind_{K\cap P}^{P}V$. In this way we also obtain an isomorphism of  $
 \mathcal J$ onto 
 $\Hom_{P\cap K}(V,W)$, which is readily checked to be the same as the preceding one.

 \bigskip Assume also that $W$ is a smooth $C[M]$-module, seen as a smooth $C[P]$-module by inflation. Then $\Ind_{P}^{G}W$ is the ''parabolic induction'' of $W$, and $\Hom_{P\cap K}(V,W)$ identifies with $\Hom_{ K\cap M}(V_{N\cap K},W)$, where $V_{N\cap K}$ is the space of coinvariants of $N\cap K$ in $V$. With that identification, 
 an intertwiner  $I$ is sent to the map from $ V_{N\cap K}$ to $W$ sending the image $\overline v$ of $v\in V$ in  $V_{N\cap K}$ to $(I[1,v]_{K})(1)$.
 By Frobenius reciprocity again $\Hom_{ K\cap M}(V_{N\cap K},W)$ is isomorphic to $\Hom_{M}(\ind_{ K\cap M}^{M}V_{N\cap K},W)$, so overall we obtain an isomorphism  
 \begin{equation} \label{calI}
j: \mathcal J =\Hom_{G}(\ind_{K}^{G}V, \Ind_{P}^{G}W) \to  \Hom_{M}(\ind_{ K\cap M}^{M}V_{N\cap K},W) \ ,
\end{equation}
which associates to $I\in \mathcal J$ the $C[M]$-linear map sending $[1,\overline v]_{M\cap K}$ to $(I[1,v]_{K})(1)$.
 
\bigskip    The isomorphism $j$ is natural in $V$ and $W$. The functor $W\to \Hom_{G}(\ind_{K}^{G}V, \Ind_{P}^{G}W)$ from the category of smooth $C[M]$-modules
to the category of sets is representable by  $\ind_{ K\cap M}^{M}V_{N\cap K}$, and  $ \End_{G} (\ind_{K} ^{G} V)$ embeds naturally in the ring of  endomorphisms of the functor.
By  Yoneda's Lemma (\cite{HS} Prop. 4.1 and Cor. 4.2), 
we have an algebra homomorphism  
 $$ \mathcal S ' :\End_{G} (\ind_{K} ^{G} V)\to   \End_{M}(\ind_{ K\cap M}^{M}V_{N\cap K})$$
  such that the diagram
 $$\xymatrix{ 
\Hom_{G}(\ind_{K}^{G}V, \Ind_{P}^{G}W) \ar[r]^{j}\ar[d]_{b}&  \Hom_{M}(\ind_{ K\cap M}^{M}V_{N\cap K},W) \ar[d]_{ \mathcal S ' (b)}	\\ 
\Hom_{G}(\ind_{K}^{G}V, \Ind_{P}^{G}W)  \ar[r]_{j}  &  \Hom_{M}(\ind_{ K\cap M}^{M}V_{N\cap K},W)	}
$$
is commutative  for any $W$. We have $j (I \circ b) = j(I) \circ  \mathcal S ' (b) $ for $b\in \End_{G} (\ind_{K} ^{G} V)$.

  By the naturality of $j$  in $W$, for any homomorphism $\alpha:W' \to W$ of smooth $C[M]$-modules we have a commutative diagram
 $$\xymatrix{ 
\Hom_{G}(\ind_{K}^{G}V, \Ind_{P}^{G}W') \ar[r]^{j'}\ar[d]^{\Ind (\alpha)}&  \Hom_{M}(\ind_{ K\cap M}^{M}V_{N\cap K},W') \ar[d]^{\alpha}	\\ 
\Hom_{G}(\ind_{K}^{G}V, \Ind_{P}^{G}W)  \ar[r]_{j}  &  \Hom_{M}(\ind_{ K\cap M}^{M}V_{N\cap K},W) \ 	}
$$
for any $V$. For $W=W'$ we obtain  $j((\Ind_{P}^{G}a) \circ  I ) = a \circ j(I) $ for $a\in\End_{M}(W)$.
 
 For $W'=\ind_{ K\cap M}^{M}V_{N\cap K}$,  
   we write   $j'=j_{0}$, 
 \begin{equation*}
j_{0}: \Hom_{G}(\ind_{K}^{G}V, \Ind_{P}^{G}(\ind_{ K\cap M}^{M}V_{N\cap K})) \to  \End_{M}(\ind_{ K\cap M}^{M}V_{N\cap K})  \ .
\end{equation*}
We define $I_{0}$ in $\Hom_{G}(\ind_{K}^{G}V, \Ind_{P}^{G}(\ind_{ K\cap M}^{M}V_{N\cap K}))$ such that $j_{0}(I_{0} )$ is  the unit element  of $\End_{M}(\ind_{ K\cap M}^{M}V_{N\cap K}) $. We have 
$$  j_{0}((\Ind_{P}^{G}\alpha) \circ  I_{0} ) = \alpha$$ 
    for all $\alpha$  in $\Hom_{M}(\ind_{ K\cap M}^{M}V_{N\cap K},W)$.
For $W=W'= \ind_{ K\cap M}^{M}V_{N\cap K}$,  we obtain
 \begin{equation}
 \label{j0a}j_{0}((\Ind_{P}^{G}a) \circ  I_{0} ) = a \ .
  \end{equation}
  for $a\in \End_{M}(\ind_{ K\cap M}^{M}V_{N\cap K}) $. 
For $b\in \End_{G} (\ind_{K} ^{G} V)$ we have  
  \begin{equation}
 \label{jS}
 \mathcal S '(b):= j_{0}(I_{0}\circ b) \ . 
 \end{equation}
 Applying $j_{0}^{-1}$ to this equality we deduce from (\ref{j0a}) 
  \begin{equation}
I_{0} \circ b =( \Ind_{P}^{G}\mathcal S '(b) )\circ I_{0} 
 \end{equation}
  for $b \in \End_{G} (\ind_{K} ^{G} V)$. 
  Summarizing we have proved

\begin{proposition} \label{I0}

(i) \ The map
 $$\mathcal S ': \End_{G} (\ind_{K} ^{G} V) \to  \End_{M}(\ind_{ K\cap M}^{M}V_{N\cap K}) $$ 
 is an algebra homomorphism such that  $I_{0} \circ b =( \Ind_{P}^{G}\mathcal S '(b) )\circ I_{0} $ for $b \in B$.

  (ii) \ We have for $\alpha$ in $\Hom_{M}(\ind_{ K\cap M}^{M}V_{N\cap K},W)$, 
  $$j((\Ind_{P}^{G}\alpha) \circ  I_{0} ) = \alpha\ .$$
  
   (iii) \   We have $j (I\circ b) = j(I) \circ \mathcal S '(b) $ for $b \in B$ and $I$ in $\Hom_{G}(\ind_{K}^{G}V, \Ind_{P}^{G}W)$.

 \end{proposition}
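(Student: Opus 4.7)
The plan is to assemble the proposition from the identities built up in the paragraphs preceding it. Everything rests on the natural isomorphism $j$ of (\ref{calI}), its specialization $j_0$ at $W = \ind_{K\cap M}^{M} V_{N\cap K}$, and the element $I_0 = j_0^{-1}(\id)$, together with the definition $\mathcal S'(b) := j_0(I_0\circ b)$ already recorded in (\ref{jS}). Each of the three clauses is extracted from these ingredients by a short diagram chase, with Yoneda's lemma supplying the algebra structure.

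First I would handle (ii). Naturality of $j$ in $W$, applied to $\alpha \in \Hom_{M}(\ind_{K\cap M}^{M} V_{N\cap K}, W)$, gives the commutative square displayed just above the proposition. Evaluating it at the single element $I_0$ and using $j_0(I_0) = \id$ yields $j((\Ind_{P}^{G}\alpha)\circ I_0) = \alpha$. The same identity shows that every $I \in \mathcal J$ admits the canonical factorisation $I = (\Ind_{P}^{G} j(I))\circ I_0$, which will be the starting point for (iii).

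For (i), the commutation $I_0\circ b = (\Ind_{P}^{G}\mathcal S'(b))\circ I_0$ is immediate: both sides are sent to $\mathcal S'(b)$ by the isomorphism $j_0$, the left side by definition and the right side by (\ref{j0a}). That $\mathcal S'$ is a ring homomorphism then follows by iterating this relation: for $b_1, b_2 \in \End_{G}(\ind_{K}^{G} V)$,
\[
I_0 \circ (b_1 b_2) = (\Ind_{P}^{G}\mathcal S'(b_1))\circ I_0\circ b_2 = (\Ind_{P}^{G}(\mathcal S'(b_1)\circ \mathcal S'(b_2)))\circ I_0,
\]
and applying $j_0$ together with (\ref{j0a}) to the two ends gives $\mathcal S'(b_1 b_2) = \mathcal S'(b_1)\circ \mathcal S'(b_2)$; unitality is clear because $\mathcal S'(\id) = j_0(I_0) = \id$. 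This is of course an instance of the Yoneda lemma cited in the text.

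Finally, for (iii), I would plug the factorisation $I = (\Ind_{P}^{G} j(I))\circ I_0$ from (ii) into $I\circ b$, use (i) to move $b$ across $I_0$, and apply $j$ and (ii) once more:
\[
I\circ b = (\Ind_{P}^{G} j(I))\circ(\Ind_{P}^{G}\mathcal S'(b))\circ I_0 = (\Ind_{P}^{G}(j(I)\circ \mathcal S'(b)))\circ I_0,
\]
so $j(I\circ b) = j(I)\circ \mathcal S'(b)$. There is no real obstacle in this proof; the only point demanding care is keeping the composition direction straight so that $\mathcal S'$ is an algebra homomorphism rather than an anti-homomorphism, but this is automatic from the chosen formula $\mathcal S'(b) = j_0(I_0\circ b)$ once one observes that precomposition on $\mathcal J$ and precomposition on $\End_{M}(\ind_{K\cap M}^{M} V_{N\cap K})$ reverse composition in the same way.
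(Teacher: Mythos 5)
Your proof is correct and follows essentially the same route as the paper: both arguments rest on the naturality of $j$ in $W$, the universal element $I_{0}=j_{0}^{-1}(\id)$, the identity (\ref{j0a}), and the definition $\mathcal S'(b)=j_{0}(I_{0}\circ b)$. The only difference is that you verify multiplicativity of $\mathcal S'$ by an explicit two-step computation where the paper delegates this to Yoneda's lemma, which is just the standard unwinding of that citation.
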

  
 \begin{remark} \label{I0R}
 {\rm 
  
i. \ An   intertwiner  $I$ in $ \Hom_{G}(\ind_{K}^{G}V, \Ind_{P}^{G}W)$ is determined by the values $(I[1,v]_{K})(1)$ in $W$, for all $v\in V$, by the Iwasawa decomposition $G=PK$.
We have
$$(I_{0}[1,v]_{K})(1) = [1,\overline v]_{M\cap K} \ . $$

ii. \ So far we have not used that $V$ is finite dimensional.
 
} \end{remark}

We now want to interpret the previous results in terms of actions of Hecke algebras.

 By  Frobenius reciprocity $B= \End_{G} (\ind_{K} ^{G} V)$ identifies  with $\Hom_{K} (V, \Res_{K}^{G}\ind_{K} ^{G} V)$, as a $C$-module; to $\Phi\in B$ we associate the map $v\mapsto \Phi_{v}:= \Phi( [1,v]_{K})$; from $\Phi$ then, we get a map $G\to \End_{C}V$ , $ g\mapsto \{ v\mapsto \Phi_{v}(g)\}$. In this way we identify $B$ with the space $\mathcal H (G,K,V)$ of functions $\Phi$ from $G$ to  $\End_{C}V$ such that 

(i) \ $\Phi(kgk')= k \circ \Phi(g) \circ k'$ for $k,k'$ in $ K$, $g$ in $G$, where we have written $k, k'$ for the endomorphisms $v\mapsto kv, v \mapsto k'v$ of $V$;

(ii) \ The  support of $ \Phi $ is a finite union of double cosets $KgK$.

 The algebra structure on $\mathcal H (G,K,V)$ obtained from that of $B$ is given by convolution
 $$\Phi * \Psi (g) = \sum_{h\in G/J}\Phi(h)\Psi (h^{-1}g) =  \sum_{h\in J\backslash G}\Phi(gh^{-1})\Psi (h ) 
 $$
 (the term $\Phi(h)\Psi (h^{-1}g)(v)$ vanishes, for fixed $g$, outside finitely many cosets $Kh$, so that the sum makes sense). Moreover the action of $\mathcal H (G,K,V)$ on $\ind_{K} ^{G} V$ is also given by convolution
 $$\Phi * f (g) = \sum_{h\in G/J}\Phi(h)(f(h^{-1}g)) =  \sum_{h\in J\backslash G}\Phi(gh^{-1})(f (h ) ) \ .
 $$
 
\begin{proposition}\label{I01} The homomorphism $\mathcal S': \mathcal H (G,K,V)\to \mathcal H (M,K\cap M,V_{N\cap K})$ is given by 
$$
\mathcal S' (\Phi) (m) (\overline v) = \sum_{n\in (N\cap K)\backslash N} \overline {\Phi(n m)(v)} \ \ {\rm for } \ \  m\in M, v\in V \ , 
$$
where bars indicate the image in $V_{N \cap K}$ of elements in $V$.
\end{proposition}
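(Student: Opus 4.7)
The strategy is to unwind the defining relation $\mathcal{S}'(b) = j_{0}(I_{0}\circ b)$ from (\ref{jS}) in Hecke-algebra language and then compute $I_{0}$ pointwise.

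Under the isomorphism $\mathcal{H}(M,K\cap M,V_{N\cap K}) \cong \End_{M}(\ind_{K\cap M}^{M}V_{N\cap K})$, any Hecke operator $\Psi$ satisfies $(\Psi\ast[1,\overline{v}]_{M\cap K})(m) = \Psi(m)(\overline{v})$. Combining this with (\ref{jS}) and the description of $j_{0}$ recalled in the text gives the identity
\[
\mathcal{S}'(\Phi)(m)(\overline{v}) \;=\; I_{0}(\Phi\ast[1,v]_{K})(1)(m).
\]
Since $(\Phi\ast[1,v]_{K})(g) = \Phi(g)(v)$ by the convolution formula together with the $K$-equivariance $\Phi(kgk') = k\Phi(g)k'$, the proposition reduces to proving the pointwise formula
\[
I_{0}(f)(1)(m) \;=\; \sum_{n\in (N\cap K)\backslash N}\overline{f(nm)}\qquad (f\in\ind_{K}^{G}V,\ m\in M)
\]
and then specializing to $f = \Phi\ast[1,v]_{K}$.

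To prove this general formula I would reduce by $C$-linearity to the standard basis elements $f = [g,w]_{K}$. Using the convention $[g,w]_{K} = g^{-1}\cdot [1,w]_{K}$ and the $G$-equivariance of $I_{0}$, the left-hand side equals $I_{0}([1,w]_{K})(g^{-1})$. Writing $g^{-1} = m_{0}n_{0}k_{0}$ with $m_{0}\in M$, $n_{0}\in N$, $k_{0}\in K$ via the Iwasawa decomposition $G=PK$, the $P$-transformation rule in $\Ind_{P}^{G}W$ combined with the value $I_{0}([1,w]_{K})(k_{0}) = [1,\overline{k_{0}w}]_{M\cap K}$ (obtained from $k_{0}\cdot[1,w]_{K} = [1,k_{0}w]_{K}$ and the defining normalization $I_{0}([1,v]_{K})(1) = [1,\overline{v}]_{M\cap K}$) identifies the left-hand side at $m$ as $(mm_{0})\overline{k_{0}w}$ when $mm_{0}\in K\cap M$, and $0$ otherwise. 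For the right-hand side, $nm\in Kg = K(m_{0}n_{0}k_{0})^{-1}$ rewrites via the semidirect-product identity $K\cap P = (K\cap N)(K\cap M)$ as the two conditions $mm_{0}\in K\cap M$ and $n\cdot(mm_{0})n_{0}(mm_{0})^{-1}\in K\cap N$; the second condition isolates exactly one coset of $(N\cap K)\backslash N$, and a direct computation of the corresponding summand reproduces $(mm_{0})\overline{k_{0}w}$.

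The main obstacle will be this last combinatorial step: correctly identifying the unique contributing coset in $(N\cap K)\backslash N$, and matching the conjugation twist by $mm_{0}$ acting on $N$ with the action of $P$ on $W$ factoring through its quotient $M$. Once the general formula is verified, substituting $f = \Phi\ast[1,v]_{K}$ with $f(nm) = \Phi(nm)(v)$ yields the claim.
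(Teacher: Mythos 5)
Your proposal is correct and follows essentially the same route as the paper: both reduce to computing $I_{0}(\Phi\ast[1,v]_{K})(1)$ via the normalization $I_{0}([1,v]_{K})(1)=[1,\overline v]_{M\cap K}$, $G$-equivariance, the Iwasawa decomposition $G=PK$, and the factorization $K\cap P=(K\cap N)(K\cap M)$. The only organizational difference is that the paper expands $\Phi([1,v]_{K})$ over representatives $h\in(P\cap K)\backslash P$, which makes the coset bookkeeping immediate, whereas you work with general basis elements $[g,w]_{K}$ and identify the unique contributing coset of $(N\cap K)\backslash N$ by hand; both computations are valid.
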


\begin{proof} As $[1,\overline v]_{M\cap K}= I_{o}[1,v] _{K}(1)$ we have  for $v\in V$,
  $$ \mathcal S '(\Phi)* [1,\overline v]_{M\cap K}= \mathcal S '(\Phi)* (I_{o}[1,v]_{K}(1))= ( \mathcal S '(\Phi) I_{o}([1,v]_{K}))(1)= I_{o}(\Phi * [1,v]_{K})(1)\ . $$
We write the element $I_{o}(\Phi * [1,v]_{K}) (1)$ of  $\ind_{M\cap K}^{M}V_{N\cap K}$ as a finite sum of $m^{-1}[1,w_{m}]_{K\cap M}$ for  $m$ running over a system of representatives of $ M\cap K\backslash M$, where $w_{m}=( I_{o}(\Phi * [1,v]_{K}) (1))(m )$. Then $\mathcal S '(\Phi)*  [1,\overline v]_{M\cap K}$ is    the sum of $ m^{-1}[1,w_{m} ]_{K\cap M}$ for $m\in  M\cap K\backslash M$.
 We compute  now $ w_{m}$.

   Using the Iwasawa decomposition 
   we  write the element $\Phi ( [1,v]_{K})$ of $\ind_{K}^{G}V$ as the sum of $h^{-1}[1,v_{h}]_{K}$  where 
  $v_{h}=(\Phi ( [1,v]_{K}))(h) = \Phi (h)(v)$, for  $h$ running over a system of representatives of $(P\cap K)\backslash P$. 
As 
    $$(I_{o} (h^{-1}[1, v_{h}] ))(1)=(h^{-1}I_{o} [1, v_{h}] )(1) =( I_{o} [1, v_{h}] ) (h^{-1}) =   h^{-1}(( I_{o} [1, v_{h}] ) (1))= m_{h^{-1}}[1,\overline { v_{h}}] \ ,  
$$ where $m_{h}$ is the image of $h$ in $M$, and $m_{h^{-1}}=m_{h}^{-1}$, we obtain  
 $$I_{o}(\Phi * [1,v]) (1)=   \sum _{h \in (P\cap K)\backslash P} m_{h}^{-1}[1,\overline { v_{h}}]   =   \sum _{m \in (M\cap K)\backslash M} m^{-1} [1, w_{m}] \  , 
 $$
 $$  w_{m}=\sum _{n \in (N\cap K)\backslash N}  [1,\overline { v_{nm}}]  = \sum _{n \in (N\cap K)\backslash N}
\overline {\Phi (nm)(v)}]  \ .
 $$ 
 
    \end{proof}
 In \cite{HV} we constructed a Satake homomorphism
 $$\mathcal S: \mathcal H (G,K,V)\to \mathcal H (M,K\cap M,V^{N\cap K}) \ \ , \ \  \mathcal S (\Phi) (m)(v) =\sum _{n\in N/(N\cap K)} \Phi (mn)(v)\ , $$ 
 for $v\in V^{N\cap K}$.
 To compare    $\mathcal S '$ with $\mathcal S$  we need to take the dual. Remark that $K$ acts on the dual space $V^{*}=\Hom_{C}(V,C)$ of $V$ via the contragredient representation, and that  the dual of $V^{*}$ is isomorphic to $V$ by our finiteness hypothesis on $V$. It is straightforward to verify that the map
 $$\iota: \mathcal H (G,K,V^{*})\to \mathcal H (G,K,V) \ \ , \ \ \iota(\Phi)(g) := (\Phi(g^{-1}))^{t} \ , $$
 where the upper index $t$ indicates the transpose,  is an algebra anti-isomorphism. We denote $A^{0}$ the opposite ring of a ring $A$. A ring morphism $f:A\to B$ defines a ring morphism $f^{0}:A^{0}\to B^{0}$ such that $f^{0}(a) = f(a)$ for $a\in A$. We view $\iota$ as an isomorphism from 
      $\mathcal H (G,K,V^{*})$ onto $\mathcal H (G,K,V) ^{0}$.
      The   linear forms on $V$ which are $(N\cap K)$-fixed identify with the linear forms on $V_{N\cap K}$,   
      $$(V_{N\cap K} )^{*}\simeq (V^{*})^{N\cap K} \ . $$
      This leads to an algebra isomorphism 
      $$\iota_{M}: \mathcal H (M,M\cap K,(V^{*})^{N\cap K})\to \mathcal H (M,M\cap K,V_{N\cap K})^{0} \ . $$
      The following proposition describes the relation between the Satake homomorphism $\mathcal S$ attached to $V^{*}$ and the homomorphism $\mathcal S'$ attached to $V$.
      
  \begin{proposition}\label{S'} The following diagram is commutative
   $$\xymatrix{ 
   \mathcal H (G,K,V^{*}) \ar[r]^<(0.2) {  \mathcal S} \ar[d]_{\iota} & \mathcal H (M,M\cap K,(V^{*})^{N\cap K}) \ar[d]_{\iota_{M}} \\
    \mathcal H (G,K,V)^{0}\ar[r]^<(0.15){{\mathcal S '} ^{0}} &  \mathcal H (M,M\cap K,V_{N\cap K} )^{0} .
    }$$
  \end{proposition}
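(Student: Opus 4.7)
The plan is to unwind both compositions in the square, evaluate the resulting endomorphisms of $V_{N\cap K}$ against an arbitrary $v^{*}\in(V^{*})^{N\cap K}\simeq(V_{N\cap K})^{*}$, and compare.

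First I would verify that the vertical anti-isomorphism $\iota_{M}$ is well defined: it is given by the same formula as $\iota$, namely $\iota_{M}(\Psi)(m)=(\Psi(m^{-1}))^{t}$ with the transpose taken under the identification of duals recalled in the excerpt, and the argument is the obvious analogue of the one indicated for $\iota$.

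Next, fix $\Phi\in\mathcal H(G,K,V^{*})$, $m\in M$, $v\in V$, and $v^{*}\in(V^{*})^{N\cap K}$. Chasing right-then-down and using the formula for $\mathcal S$ recalled just before the proposition,
$$\langle\iota_{M}(\mathcal S(\Phi))(m)(\overline v),v^{*}\rangle=\langle v,\mathcal S(\Phi)(m^{-1})(v^{*})\rangle=\sum_{n\in N/(N\cap K)}\langle v,\Phi(m^{-1}n)(v^{*})\rangle.$$
Chasing down-then-right, then applying Proposition~\ref{I01} together with $(nm)^{-1}=m^{-1}n^{-1}$ and the defining property of the transpose,
$$\langle\mathcal S'(\iota(\Phi))(m)(\overline v),v^{*}\rangle=\sum_{n\in(N\cap K)\backslash N}\langle v,\Phi(m^{-1}n^{-1})(v^{*})\rangle.$$
The inversion $n\leftrightarrow n^{-1}$ is a bijection between $N/(N\cap K)$ and $(N\cap K)\backslash N$, and it identifies the two sums term by term; since this holds for every $v^{*}$ and every $\overline v$, the two endomorphisms of $V_{N\cap K}$ coincide and the square commutes.

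The only ancillary checks are that each individual term is independent of its coset representative (a consequence of the bi-$K$-equivariance of $\Phi$ combined with the $(N\cap K)$-fixity of $v^{*}$) and that only finitely many terms contribute (built into the definition of $\mathcal H(G,K,V^{*})$ and the Iwasawa structure of $G$). I do not expect a substantive obstacle: the content of the statement is that the formulas for $\mathcal S$ and $\mathcal S'$ are related by interchanging ``integrate on the right'' with ``integrate on the left'', which is exactly what transposition records.
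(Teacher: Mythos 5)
Your proof is correct and follows essentially the same route as the paper: both evaluate the two compositions on a pair $(m,\overline v)$ via the explicit formulas for $\mathcal S$ and for $\mathcal S'$ (Proposition \ref{I01}), and then match the sums through the inversion bijection $n\mapsto n^{-1}$ between $N/(N\cap K)$ and $(N\cap K)\backslash N$. The only cosmetic difference is that you pair against a test vector $v^{*}$ throughout, whereas the paper manipulates the transposed operators directly; the computation is the same.
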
   
  
  \begin{proof} For $v\in V$ of image $\overline v$ in $V_{N\cap K}$ we have:
  $$((\iota_{M}\circ \mathcal S) \Phi)(m)(\overline v)   =   (\mathcal S (\Phi) (m^{-1}) ^{t} (\overline v)  = \overline {\sum _{n\in N/(N\cap K)} \Phi (m^{-1}n) ^{t} (v) }
  $$
  $$
= \overline {\sum _{n\in (N\cap K)\backslash N} \Phi ((nm)^{-1}) ^{t} (v) }=  ({\mathcal S '} ^{0} \circ \iota) (\overline v) \ .
 $$
  
  \end{proof}

\section{Representations of $G(k)$}

Let $C$ be  an algebraically closed  field of positive characteristic $p$, let $k$  be a finite field of the same characteristic $p$ and of cardinal $q$, and let $G$  be a connected reductive group over  $k$. We fix  a minimal parabolic $k$-subgroup $B$ of $G$ with  unipotent radical $U$ and   maximal $k$-subtorus $T$. Let  $S $  be the maximal $k$-split subtorus of $T$, let
 $W=W_{G}=W(S,G)$  be the Weyl group, let $\Phi=\Phi_{G} $  be the roots of $S$ with respect to $U$ (called positive),
  $\Delta \subset \Phi $  the subset of simple roots. For $a\in \Phi$, let $U_{a}$ be unipotent subgroup denoted in (\cite{BTII} 5.1) by $U_{(a)}$. A parabolic $k$-subgroup $P$ of $G$ containing $B$ is called standard, and has a unique  Levi decomposition $P=MN$ with Levi subgroup $M$ containing $T$.  The standard Levi subgroup  $P=MU=UM$ is determined by $M$. There exists a unique subset $\Delta_{M}\subset \Delta$ such that 
   $M$ is  generated by $T, U_{a}, U_{-a}$ for $a $ in the subset of $\Phi $ generated by $\Delta_{M}$.
   This determines a bijection between the  subsets of $\Delta$  and  the standard parabolic $k$-subgroups of $G$. 
   
   Let $\overline B= T \overline U$ be the opposite of $B=TU$, and $\overline P=M\overline N$ the opposite of $P$. We have $\overline B= w_{0}Bw_{0}^{-1}$ where $w_{0}=w_{0}^{-1}$ is the longest element  of $W$. The roots  of $S$ with respect to $\overline U$, i.e. the positive roots for  $\overline U$, are the negative roots for $U$. The simple roots for $\overline U$ are $-a$ for $a\in \Delta$.   
 
    For $a\in \Delta$  let $G_{a}\subset G$  be the subgroup generated by the unipotent subgroups $U_{a}$ and $U_{-a}$.    Let $T_{a}:=G_{a}\cap T$.

    \begin{definition} Let $a\in \Delta$ be a simple root of $S$ in $B$ and let $\psi:  T(k) \to C^{*}$ be a $C$-character of $T(k)$.    We denote by    
$$\Delta_{\psi}\quad := \quad \{a \in \Delta \ | \ \psi (T_{a}(k))=1\} $$ 
the set of simple roots $a$ such that $\psi$ is trivial on $T_{a}(k)$.
\end{definition}

   \begin{example} {\rm $G=GL(n)$. Then $T=S$ is the diagonal group and  the groups $T_{a} $ for $a\in \Delta$ are the subgroups $T_{i} \subset T$ for $1\leq i \leq n-1$, with coefficients  $x_{i}=x_{i+1}^{-1}$ and $x_{j}=1$ otherwise.
    When $k=\mathbb F_{2}$ is the field with $2$ elements, $T(k)$ is the trivial group. }
 \end{example}

 Let  $V$ be an  irreducible $C$-representation of $G(k)$.  When $P=MN$ is a standard parabolic subgroup of $G$, we recall that  the natural action of $M(k)$ on  $V^{N(k)}$ is  irreducible (\cite{CE} Theorem 6.12). In particular, taking the Borel subgroup $B=TU$,  the dimension of the vector space $V^{U(k)}$  is $1$ and 
the  group $T(k)$ acts on $V^{U(k)}$ by   a character   $\psi_V$.

\begin{proposition}\label{1} The stabilizer in $G(k)$ of the line $V^{U(k)}$ is  $P_{V}(k)$ where  $P_{V}=M_{V}N_{V}$ is a  standard parabolic subgroup   of $G$  associated to a  subset  $\Delta_{V}\subset \Delta_{\psi_V} $.
\end{proposition}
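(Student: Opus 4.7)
The plan is to first pin down the stabilizer $H$ of $\mathcal{L}:=V^{U(k)}$ as the $k$-points of a standard parabolic, and then to extract the constraint on $\psi_V$ via the rank-one subgroups $G_a$.

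First, I would observe that $U(k)$ acts trivially on $\mathcal{L}$ by definition, while $T(k)$ preserves $\mathcal{L}$ through the character $\psi_V$; so $B(k)=T(k)U(k)$ stabilizes $\mathcal{L}$, giving $B(k)\subset H$. Next, I would invoke the classification of overgroups of a minimal parabolic in a finite group of Lie type: the BN-pair structure on $G(k)$ arising from the relative root system forces every subgroup containing $B(k)$ to equal $P(k)$ for a unique standard $k$-parabolic subgroup $P$ of $G$. Applied to $H$, this produces $H=P_V(k)$ with $P_V=M_VN_V$ associated to a unique subset $\Delta_V\subset\Delta$.

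To obtain $\Delta_V\subset\Delta_{\psi_V}$, fix $a\in\Delta_V$. Then $G_a\subset M_V\subset P_V$, so $G_a(k)$ stabilizes $\mathcal{L}$ and acts on it through a character $\chi_a\colon G_a(k)\to C^\times$ whose restriction to $T_a(k)$ coincides with $\psi_V|_{T_a(k)}$. Because $U_a(k)$ and $U_{-a}(k)$ are finite $p$-groups and $C^\times$ has no nontrivial $p$-power torsion (as $\charf C=p$), the character $\chi_a$ is trivial on each of $U_a(k)$ and $U_{-a}(k)$. The usual $SL_2$-identity
\[
a^\vee(t)=n_a(t)\,n_a(1)^{-1},\qquad n_a(t):=u_a(t)\,u_{-a}(-t^{-1})\,u_a(t),
\]
valid inside the rank-one group $G_a$, then writes every element of the coroot image $a^\vee(k^\times)$ as a product of elements of $U_a(k)$ and $U_{-a}(k)$, so $\chi_a$ vanishes on $a^\vee(k^\times)$. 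Complementing this, a representative $w_a\in G_a(k)$ of $s_a$ preserves $\mathcal{L}$, yielding the Weyl invariance $\psi_V(t)=\psi_V(s_a\cdot t)$ for all $t\in T(k)$; together with the vanishing just established, this forces $\chi_a|_{T_a(k)}=1$, i.e.\ $a\in\Delta_{\psi_V}$.

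The main obstacle is this last step: the $p$-group argument on its own only kills $\chi_a$ on the coroot image $a^\vee(k^\times)$, and to close the possible gap between $a^\vee(k^\times)$ and the full $T_a(k)=G_a(k)\cap T(k)$ one has to feed in the Weyl-invariance relation coming from $w_a\mathcal{L}=\mathcal{L}$, using the rank-one structure of $G_a$ in an essential way.
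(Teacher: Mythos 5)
The paper offers no argument here at all: its entire proof is the citation ``\cite{Curtis} Theorem 6.15''. Your first two steps correctly reconstruct the standard argument behind that reference: $B(k)$ stabilizes the line since $U(k)$ fixes it and $T(k)$ normalizes $U(k)$, and the BN-pair theorem identifies any overgroup of $B(k)$ with $P_J(k)$ for a unique standard parabolic $P_J$. Your $p$-group observation is also correct and is really the heart of the matter: the character $\chi_a$ by which $G_a(k)$ acts on the line must kill the $p$-groups $U_a(k)$ and $U_{-a}(k)$, hence the abstract subgroup $\langle U_a(k),U_{-a}(k)\rangle$ they generate, hence its intersection with $T(k)$.

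The trouble is your last step. The Weyl-invariance relation $\psi_V(s_a(t))=\psi_V(t)$ gives nothing beyond what you already have: $t\,s_a(t)^{-1}=a^\vee(a(t))$ lies in the coroot image, so conjugation-invariance only re-derives triviality on a subset of $a^\vee(k^\times)$. It cannot close the gap you flag, and in fact nothing can if $T_a(k)$ is read as the $k$-points of the algebraic group $G_a\cap T$. Take $G=PGL_2$ over $k=\mathbb F_q$ with $q$ odd and $V$ the nontrivial character inflated from $PGL_2(\mathbb F_q)/PSL_2(\mathbb F_q)$: then $P_V=G$, so $a\in\Delta_V$, yet $\psi_V$ is the Legendre symbol on $T(k)\simeq k^\times=(G_a\cap T)(k)$ and is therefore nontrivial there; it is trivial exactly on the squares, which is $T(k)\cap\langle U_a(k),U_{-a}(k)\rangle$. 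The statement (and Curtis's theorem) holds precisely because $T_a(k)$ must be understood in Curtis's sense, as $T(k)\cap\langle U_a(k),U_{-a}(k)\rangle$ inside the finite group, rather than as the $k$-points of the algebraic subgroup $G_a\cap T$; the two agree when $G_{\mathrm{der}}$ is simply connected (e.g.\ for $GL_n$, the paper's example) but not in general. With that reading, your $p$-group argument already completes the proof, and the $SL_2$-identity and Weyl-invariance discussion is superfluous; with the algebraic reading, your claimed conclusion is false, so the asserted final implication is a genuine gap.
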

\begin{proof} \cite{Curtis} Theorem 6.15.
\end{proof}
    
    \begin{corollary} \label{2} The dimension of $V$ is $1$   if and only if $P_{V}=G$.  
\end{corollary}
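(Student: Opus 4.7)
The plan is to observe that the corollary follows directly from Proposition \ref{1} combined with the irreducibility of $V$ and the fact that $V^{U(k)}$ is one-dimensional.

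For the forward direction, suppose $\dim V = 1$. Since $V^{U(k)}$ is one-dimensional and nonzero (as noted just before Proposition \ref{1}, citing \cite{CE} Theorem 6.12), and since $V^{U(k)} \subseteq V$, we must have $V^{U(k)} = V$. The whole space $V$ is trivially a $G(k)$-invariant line, so its stabilizer in $G(k)$ is $G(k)$ itself. By Proposition \ref{1}, this means $P_V = G$.

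For the reverse direction, suppose $P_V = G$. Then by Proposition \ref{1} the line $V^{U(k)}$ is stable under all of $G(k)$. Thus $V^{U(k)}$ is a nonzero $G(k)$-subrepresentation of $V$; by irreducibility of $V$ we get $V = V^{U(k)}$, and since $\dim V^{U(k)} = 1$ this forces $\dim V = 1$.

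There is no real obstacle here: both implications are immediate once one recalls $\dim V^{U(k)} = 1$ and that $V^{U(k)}$ is nonzero. The only minor subtlety worth mentioning is that one uses the irreducibility of $V$ critically in the reverse direction to promote $G(k)$-stability of the line to equality with $V$.
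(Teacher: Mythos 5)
Your proof is correct and follows essentially the same argument as the paper: in the forward direction $V^{U(k)}=V$ forces the stabilizer of the line to be all of $G(k)$, and in the reverse direction the $G(k)$-stability of the line $V^{U(k)}$ together with irreducibility gives $V=V^{U(k)}$. No gaps.
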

\begin{proof}   If the dimension of $V$ is $1$, then $V=V^{U(k)} $ and  $P_{V}=G$. Conversely if $P_{V} = G$ the line   $V^{U(k)}$ is stable by $G(k)$ hence is equal to 
the irreducible representation $V$.   
\end{proof}

 \begin{corollary} When $P=MN$ is a standard parabolic subgroup of $G$, the dimension of $V^{N(k)}$ is equal to $1$ if and only if $P \subset P_{V}$.
 \end{corollary}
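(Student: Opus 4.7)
The plan is to reduce the statement to Corollary \ref{2} applied to the group $M$ and its irreducible representation $V^{N(k)}$, by identifying the stabilizer in $M(k)$ of the line $V^{U(k)}$ with $M(k)\cap P_V(k)$.

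First, I recall (from the paragraph just before Proposition \ref{1}) that $V^{N(k)}$ is an irreducible $C$-representation of $M(k)$. The minimal parabolic of $M$ compatible with our choices is $B\cap M = T(U\cap M)$, with unipotent radical $U\cap M$. Since $U = (U\cap M)\, N$ as a semi-direct product, one has $V^{U(k)} = (V^{N(k)})^{(U\cap M)(k)}$, so the $(U\cap M)(k)$-fixed line of the irreducible $M(k)$-representation $V^{N(k)}$ is exactly $V^{U(k)}$. Applying Proposition \ref{1} inside $M$ to $V^{N(k)}$, the stabilizer in $M(k)$ of the line $V^{U(k)}$ is the group of $k$-points of a standard parabolic subgroup $Q\subset M$, and by Corollary \ref{2} (applied to the $M$-representation $V^{N(k)}$) one has $\dim V^{N(k)} = 1$ if and only if $Q = M$.

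Next, by Proposition \ref{1} applied to $G$, the stabilizer in $G(k)$ of the line $V^{U(k)}$ is $P_V(k)$. Intersecting with $M(k)$ yields that the stabilizer in $M(k)$ of $V^{U(k)}$ equals $M(k)\cap P_V(k)$. Comparing with the previous paragraph gives $Q(k) = M(k)\cap P_V(k)$, hence $Q = M\cap P_V$ as algebraic subgroups of $M$. Therefore $\dim V^{N(k)} = 1$ if and only if $M\subset P_V$.

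Finally, I need to translate $M\subset P_V$ into $P\subset P_V$. Since $P_V$ is a standard parabolic it contains $B$, hence $U$, and therefore contains $N\subset U$ automatically. Thus $P = MN \subset P_V$ if and only if $M\subset P_V$, which finishes the proof. The only step that needs minor care is the semi-direct identification $V^{U(k)} = (V^{N(k)})^{(U\cap M)(k)}$; once that is in hand, the argument is a direct reduction to Corollary \ref{2} for $M$, and no further calculation is required.
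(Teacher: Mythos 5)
Your argument is correct, and it is essentially the argument the paper leaves implicit: the corollary is meant to follow from Corollary \ref{2} applied to $M$ and the irreducible $M(k)$-representation $V^{N(k)}$, whose parameters the paper later records as $(\psi_V,\Delta_V\cap\Delta_M)$ — the parameter-level form of your identification of the stabilizer in $M(k)$ with $M(k)\cap P_V(k)$. The reduction via $V^{U(k)}=(V^{N(k)})^{(U\cap M)(k)}$ and the final observation that $P\subset P_V$ is equivalent to $M\subset P_V$ are both exactly as intended.
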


\begin{remark}{\rm i. \ The group $P_{V}$ measures the irregularity of $V$. 
A $1$-dimensional representation $V$ is as little regular as possible ($P_{V} =G$), and $V$ is as regular as possible when $P_{V}=B$.  

ii. \ The  group $P_{V}$ depends on the choice of $B$. Two minimal parabolic $k$-subgroups   of $G(k)$ are conjugate in $G(k)$ and  for $g\in G(k)$, the stabilizer of $V^{gU(k)g^{-1}}=gV^{U(k)}$ is $gP_{V}g^{-1}$. But the inclusion $P\subset P_{V}$ depends only on $P$ because 
$$gB(k)g^{-1}\subset P(k) \ \ {\rm is \ equivalent \  to} \ \  g\in P(k) \   $$
 (\cite{Bki} chapitre IV, \S 2, 2.5, Prop. 3). 
The inclusion $P_{V}\subset P$ depends also only on $P$, for the same reason.
}
\end{remark}

 \begin{definition}  \label{regu}  We say that 
 
 i. \ $V$ is $M$-regular  when the stabilizer $P_{V}(k)$ in $G(k)$ of the line $V^{U(k)}$  is contained in $P(k)$,
 
 ii. \  $V$ is $M$-coregular when the stabilizer $\overline P_{V}(k) $ in $G(k)$ of the line $V^{\overline U(k)}$  is contained in $\overline P(k)$.
 
 \end{definition}

\bigskip We recall the classification of the $C$-irreducible representations $V$ of $G(k)$.
 
\begin{theorem} \label{Curtis} The isomorphism class of  $V$ is characterized by $\psi_{V}$ and $\Delta_{V}\subset \Delta_{\psi_{V}}$.  For each $C$-character $\psi$ of $T(k)$ and each subset $J\subset \Delta_{\psi}$ there exists a $C$-irreducible representation $V$ of $G(k)$ such that $\psi_{V}=\psi, \Delta_{V}= J$.  
\end{theorem}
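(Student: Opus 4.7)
The plan is to reduce to the classical theorem of Curtis on irreducible $C$-representations of finite reductive groups in natural characteristic, already invoked in Proposition \ref{1}. The assignment $V \mapsto (\psi_V, \Delta_V)$ is in fact well-defined from the material preceding the statement: applying (\cite{CE} Theorem 6.12) to the Borel $B=TU$ shows that $V^{U(k)}$ is one-dimensional, $\psi_V$ is the $T(k)$-action on that line, and Proposition \ref{1} identifies $\Delta_V$ as a subset of $\Delta_{\psi_V}$. Thus the substance of the theorem is that this assignment is both injective and surjective onto pairs $(\psi, J)$ with $J \subset \Delta_{\psi}$.

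For injectivity, suppose $V$ and $V'$ are irreducible with $\psi_V = \psi_{V'} = \psi$ and $\Delta_V = \Delta_{V'} = J$. Frobenius reciprocity gives
\[
\Hom_{G(k)}(V, \Ind_{B(k)}^{G(k)}\psi) \;\cong\; \Hom_{B(k)}(V, \psi) \;\cong\; (V^{U(k)})^{*} \;\neq\; 0,
\]
so both $V$ and $V'$ embed into $\Ind_{B(k)}^{G(k)}\psi$, and similarly both embeddings hit the $G(k)$-socle. The crucial step is then to show that the socle of $\Ind_{B(k)}^{G(k)}\psi$ decomposes into finitely many pairwise non-isomorphic irreducibles, indexed bijectively by the subsets of $\Delta_{\psi}$, with the summand labelled by $J$ characterized precisely by the property that the stabilizer of its $U(k)$-fixed line is the standard parabolic associated to $J$. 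Given this, the common value $J$ of $\Delta_V$ and $\Delta_{V'}$ forces $V \simeq V'$.

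For surjectivity, given $\psi$ and $J \subset \Delta_{\psi}$, let $P_J = M_J N_J$ be the standard parabolic with $\Delta_{M_J} = J$. The hypothesis $J \subset \Delta_{\psi}$ means $\psi$ is trivial on each $T_a(k)$ for $a\in J$, which are precisely the images in $T(k)$ of the coroots that generate $T(k) \cap [M_J, M_J](k)$; hence $\psi$ extends to a $C$-character $\tilde\psi$ of $M_J(k)$, inflated to $P_J(k)$ by triviality on $N_J(k)$. Define $V(\psi, J)$ to be the unique irreducible subrepresentation of the $G(k)$-socle of $\Ind_{P_J(k)}^{G(k)}\tilde\psi$ whose $U(k)$-fixed line carries $\psi$. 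One then checks directly that $V(\psi, J)^{U(k)}$ is one-dimensional with $T(k)$-character $\psi$ and that its stabilizer in $G(k)$ is exactly $P_J(k)$, so that $\psi_{V(\psi, J)} = \psi$ and $\Delta_{V(\psi, J)} = J$.

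The main obstacle is the socle decomposition of $\Ind_{B(k)}^{G(k)}\psi$ in natural characteristic, namely that all irreducible summands occur, pairwise non-isomorphically, and are indexed by the subsets of $\Delta_{\psi}$ via their stabilizer parameter. This is exactly the content of Curtis' classification (\cite{Curtis} Theorem 6.15, cf.\ \cite{CE}); I would invoke it as a black box rather than reprove it, and the rest of the theorem then follows formally from the Frobenius-reciprocity embedding and the construction of $V(\psi, J)$ above.
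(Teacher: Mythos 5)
Your strategy is in substance the paper's own: Theorem \ref{Curtis} \emph{is} Curtis' classification for finite groups with a split $(B,N)$-pair in natural characteristic, and the paper's entire proof is the citation of \cite{Curtis} (Theorem 5.7). Treating that classification as a black box and organizing the statement around the constituents of a principal series is therefore the intended argument, and your reduction is essentially the same proof with extra scaffolding.

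That scaffolding, however, contains a directional error worth fixing. You assert $\Hom_{B(k)}(V,\psi)\cong (V^{U(k)})^{*}\neq 0$ for $\psi=\psi_{V}$, and hence that $V$ \emph{embeds} in $\Ind_{B(k)}^{G(k)}\psi_{V}$. A $B(k)$-equivariant map $V\to\psi$ must kill $(1-U(k))V$, so it factors through the coinvariants $V_{U(k)}$; by Proposition \ref{deck} and Lemma \ref{overU} of the paper, the line $V_{U(k)}\simeq V^{\overline U(k)}$ carries the character $w_{0}(\psi_{V})$, not $\psi_{V}$. Thus $\Hom_{B(k)}(V,\psi_{V})=0$ unless $w_{0}(\psi_{V})=\psi_{V}$. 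What is true is that $V$ is a \emph{quotient} of $\Ind_{B(k)}^{G(k)}\psi_{V}$ (via the other adjunction, using the line $V^{U(k)}$) and a \emph{subrepresentation} of $\Ind_{B(k)}^{G(k)}w_{0}(\psi_{V})$; the distinction is exactly the phenomenon recorded in Remark \ref{overUR}. Your injectivity and surjectivity arguments go through after systematically replacing ``socle'' by ``head'' (or $\psi$ by $w_{0}(\psi)$), and since the decomposition of that head and the parametrization of its constituents by subsets of $\Delta_{\psi}$ are precisely the content of the cited Curtis theorem, the reduction still stands; but the intermediate claim as written is false and should be corrected.
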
  
 \begin{proof} (\cite{Curtis} Theorem 5.7).
\end{proof}

\begin{definition}\label{Par}  $(\psi_{V}, \Delta_{V})$ are called the parameters of the irreducible $C$-representation $V$ of $G(k)$.

\end{definition}

\begin{example} \label{special} {\rm 
The  irreducible representations $V$  with 
$\psi_{V  }=1$  are classified  by the subsets of $ \Delta$. They are the special representations called   sometimes the generalized Steinberg representations. We denote  by   $\Sp_{P}$ the special representation $V$ such that $\Delta_{V}=\Delta_{M} $ with $P=MN$. The representation $\Sp_{G}$ is the trivial  character and $\Sp_{B}$ is the Steinberg representation.
 }
 \end{example}

  For a standard parabolic subgroup  $P=MN$, the irreducible  $C$-representation $V^{N(k)}$ of $M(k)$  is associated to $\psi_{V}$ and to  $ \Delta_{V}\cap \Delta_{M} $.

 \begin{proposition} \label{wi}  The $M$-regular  irreducible $C$-representations $V$ of $G(k)$ are in bijection with   the    irreducible representations of $M(k)$  by the map $V\mapsto  V^{N(k)}$. Those representations $V$ with $M_{V}=M$  correspond to the characters of $M(k)$.
  \end{proposition}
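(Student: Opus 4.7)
The plan is to reduce everything to a bookkeeping exercise on the parameters $(\psi_{V},\Delta_{V})$ furnished by Theorem~\ref{Curtis}, combined with the explicit description of the parameters of $V^{N(k)}$ recalled just before the statement.

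First I would translate the $M$-regularity of $V$ into a condition on parameters. Since $P$ and $P_{V}$ are both standard parabolic subgroups, with associated subsets $\Delta_{M}$ and $\Delta_{V}$ of $\Delta$, the inclusion $P_{V}\subset P$ is equivalent to $\Delta_{V}\subset\Delta_{M}$. By Theorem~\ref{Curtis} applied to $G$, the $M$-regular irreducible $C$-representations of $G(k)$ are therefore in bijection with the pairs $(\psi,J)$ where $\psi$ is a character of $T(k)$ and $J\subset \Delta_{\psi}\cap\Delta_{M}$.

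Next I would parametrize the irreducible $C$-representations of $M(k)$. Since $T\subset M$ is a maximal $k$-torus, $\Delta_{M}$ is the set of simple roots of $M$, and for $a\in\Delta_{M}$ the group $T_{a}$ lies in $M$; so the ``$\Delta_{\psi}$'' computed inside $M$ equals $\Delta_{\psi}\cap\Delta_{M}$. Applying Theorem~\ref{Curtis} to $M$ then gives exactly the same parametrizing set: pairs $(\psi,J)$ with $\psi$ a character of $T(k)$ and $J\subset\Delta_{\psi}\cap\Delta_{M}$. The remark preceding the proposition tells us that $V^{N(k)}$ has parameters $(\psi_{V},\Delta_{V}\cap\Delta_{M})$; when $V$ is $M$-regular, $\Delta_{V}\cap\Delta_{M}=\Delta_{V}$, so the map $V\mapsto V^{N(k)}$ sends parameters to the same parameters, which yields the claimed bijection.

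For the second assertion, $M_{V}=M$ means precisely that $\Delta_{V}=\Delta_{M}$, so that $V^{N(k)}$ is the irreducible representation of $M(k)$ with parameters $(\psi_{V},\Delta_{M})$. Applying Corollary~\ref{2} inside $M$, this is equivalent to its ``$P_{V^{N(k)}}$'' being all of $M$, hence to $V^{N(k)}$ being one-dimensional, i.e.\ a character of $M(k)$. The only substantive point, which I would verify carefully, is that the implicit identifications (the simple roots and Weyl group data of $M$ agreeing with the restriction of those of $G$, and the ``$\Delta_{\psi}$'' construction for $M$ coinciding with $\Delta_{\psi}\cap\Delta_{M}$) really hold; once that is in place the argument is purely combinatorial.
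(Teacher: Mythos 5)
Your proof is correct and follows essentially the same route as the paper: both reduce the statement to bookkeeping with the Curtis parameters $(\psi_V,\Delta_V)$, using that $M$-regularity means $\Delta_V\subset\Delta_M$ and that $V^{N(k)}$ has parameters $(\psi_V,\Delta_V\cap\Delta_M)$. If anything, your write-up is slightly more complete, since you also spell out the second assertion (via Corollary~\ref{2} applied to $M$), which the paper's proof leaves implicit.
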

 
\begin{proof} For a given irreducible representation $W$ of $M(k)$ of parameter $(\psi_{W}, \Delta_{W} )$  with $\Delta_{W} \subset \Delta_{\psi_{W}}\cap \Delta_{M}$, where  $\Delta_{\psi_{W}} \subset \Delta$ is the set of $a\in \Delta$ with 
 $\psi_{W}$ trivial on $T_{a}(k)$, 
the number of  isomorphism classes of  irreducible $C$-representations $V$ of $G(k)$ with $\overline V$ isomorphic to   $W$, is equal to the number of subsets of $\Delta_{\psi_{W}} - (\Delta_{\psi_{W}}\cap \Delta_{M})$. Only one of them  satisfies $\Delta_{V} \subset \Delta_{M}$. There is a unique (modulo isomorphism) $V$ with $\overline V \simeq W$  if and only if $\psi_{W}$ is not trivial on $T_{a}(k)$, for all $a\in \Delta - \Delta_{M}$.
 \end{proof}

The parameters $(\psi_{V}, \Delta_{V})$ depend on the choice of the pair $(T,U)$. The  parameters $(\overline \psi_{V}, \overline \Delta_{V})$ of $V$ for the opposite  pair  $(T,\overline U)$ are:
 
\begin{lemma} \label{overU}  $\overline \psi_{V}= w_{0}(\psi_{V}) $ , $\overline \Delta_{V} = w_{0}(\Delta_{V})$.
\end{lemma}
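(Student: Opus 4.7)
The plan is to exploit the conjugacy of the pairs $(T,U)$ and $(T,\overline{U})$ under the longest Weyl element $w_{0}$, using the already-noted identity $\overline{B}=w_{0}Bw_{0}^{-1}$. First I fix a representative $\dot{w}_{0}\in N_{G}(S)(k)$ of $w_{0}$; such a lift exists because $k$ is finite, so by Lang's theorem the natural map $N_{G}(S)(k)\to W(k)$ is surjective. Conjugation by $\dot{w}_{0}$ carries $U(k)$ onto $\overline{U}(k)$, and applying $\dot{w}_{0}$ to the one-dimensional line $V^{U(k)}$ produces a line annihilated by $\overline{U}(k)$; by the uniqueness of $V^{\overline{U}(k)}$ (also one-dimensional, since $\overline{B}$ is a Borel) this forces $V^{\overline{U}(k)}=\dot{w}_{0}\cdot V^{U(k)}$.

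Next, to identify $\overline{\psi}_{V}$, I compute the $T(k)$-action on a generator $\dot{w}_{0}v$ of $V^{\overline{U}(k)}$, where $v$ spans $V^{U(k)}$. The identity $t\cdot(\dot{w}_{0}v)=\dot{w}_{0}(\dot{w}_{0}^{-1}t\dot{w}_{0})v=\psi_{V}(\dot{w}_{0}^{-1}t\dot{w}_{0})\,\dot{w}_{0}v$ then reads off $\overline{\psi}_{V}(t)=\psi_{V}(\dot{w}_{0}^{-1}t\dot{w}_{0})=(w_{0}\psi_{V})(t)$, with the standard action of $W$ on characters of $T(k)$. The result is independent of the chosen lift because $T(k)$ is commutative.

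For $\overline{\Delta}_{V}$, I note that the stabilizer of the line $V^{\overline{U}(k)}=\dot{w}_{0}V^{U(k)}$ is $\overline{P}_{V}(k)=\dot{w}_{0}P_{V}(k)\dot{w}_{0}^{-1}$. Since $P_{V}=M_{V}N_{V}$ is the standard $k$-parabolic of $G$ associated to $\Delta_{V}\subset\Delta$, its conjugate by $\dot{w}_{0}$ contains $\dot{w}_{0}B\dot{w}_{0}^{-1}=\overline{B}$, and its Levi $\dot{w}_{0}M_{V}\dot{w}_{0}^{-1}$ is generated by $T$ together with the root subgroups $U_{\pm w_{0}(a)}=\dot{w}_{0}U_{\pm a}\dot{w}_{0}^{-1}$ for $a\in\Delta_{V}$. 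This is precisely the standard parabolic with respect to $\overline{B}$ associated to the subset $w_{0}(\Delta_{V})\subset w_{0}(\Delta)=-\Delta$ of simple roots for $\overline{U}$, giving $\overline{\Delta}_{V}=w_{0}(\Delta_{V})$.

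The only technical point is the existence of a $k$-rational lift of $w_{0}$, which is furnished by Lang's theorem over the finite field $k$; the remainder is straightforward conjugation bookkeeping, requiring no characteristic-$p$ or regularity input.
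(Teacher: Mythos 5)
Your proof is correct and follows essentially the same route as the paper: conjugating the pair $(T,U)$ to $(T,\overline U)$ by $w_{0}$, identifying $V^{\overline U(k)}=\dot w_{0}V^{U(k)}$, and transporting the character and the stabilizing parabolic accordingly; the paper simply states this in two lines, while you additionally justify the $k$-rational lift of $w_{0}$ and spell out the conjugation bookkeeping. (One wording slip: the line $\dot w_{0}V^{U(k)}$ is \emph{fixed}, not ``annihilated,'' by $\overline U(k)$.)
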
 
\begin{proof} As  $\overline B=w_{0}B w_{0} ^{-1}$, the torus $T(k)$ acts by the character $w_{0}(\psi_{V}) $ on the line $V^{\overline U(k)}$ and $\overline P_{V}=w_{0}P_{V}w_{0}^{-1}$ is the stabilizer of the line $V^{\overline U(k)}$. Hence the subset  $\overline \Delta_{V }$ of simple roots is equal to  $w_{0}(\Delta_{V}) \subset -\Delta$.
\end{proof}

  The contragredient representation  $V^{*}$ is irreducible and its parameters for the pair $(T,U)$ are:
       
\begin{lemma}\label{dual}  $  \psi_{V^{*}}=w_{0}(\psi_{V})^{-1} \ , \  \Delta_{V^{*}}= - w_{0}(\Delta_{V})$. 
\end{lemma}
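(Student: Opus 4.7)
My plan is to determine $\psi_{V^*}$ and $\Delta_{V^*}$ separately, in each case exploiting Lemma \ref{overU} to pass between the pair $(T,U)$ and its opposite $(T,\overline U)$.

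For $\psi_{V^*}$, duality provides a canonical $T(k)$-equivariant identification $(V^*)^{U(k)}\simeq (V_{U(k)})^*$, so its character is the inverse of the $T(k)$-character on $V_{U(k)}$. I would then invoke the standard fact (a consequence of irreducibility of $V$ in defining characteristic) that the natural $T(k)$-equivariant composition $V^{\overline U(k)}\hookrightarrow V\twoheadrightarrow V_{U(k)}$ between two one-dimensional spaces is an isomorphism; equivalently, the evaluation pairing $V^{\overline U(k)}\otimes (V^*)^{U(k)}\to C$ is nondegenerate. Combined with Lemma \ref{overU} applied to $V$, which gives $T(k)$-character $w_0(\psi_V)$ on $V^{\overline U(k)}$, this yields $\psi_{V^*}=w_0(\psi_V)^{-1}$.

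For $\Delta_{V^*}$, applying Lemma \ref{overU} now to $V^*$ gives $\overline\Delta_{V^*}=w_0(\Delta_{V^*})$, so it suffices to show $\overline\Delta_{V^*}=-\Delta_V$ as subsets of $-\Delta$; then $\Delta_{V^*}=w_0(-\Delta_V)=-w_0(\Delta_V)$. The subset $\overline\Delta_{V^*}\subset -\Delta$ is the $\overline B$-parameter of the stabilizer $\overline P_{V^*}$ of $(V^*)^{\overline U(k)}$ in $G(k)$, which equals the stabilizer in $V$ of its annihilator hyperplane $K_{\overline U}:=\ker(V\twoheadrightarrow V_{\overline U(k)})$. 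I aim to identify this stabilizer with the $\overline B$-parabolic $M_V\overline{N_V}$ (the opposite of $P_V$ with respect to $B$), whose $\overline B$-parameter is $-\Delta_V$. The inclusion $M_V\overline{N_V}\subset\overline P_{V^*}$ is routine: $\overline{N_V}\subset\overline U$ fixes the line pointwise, and $M_V$ normalizes $\overline U$ via the decomposition $\overline U=(M_V\cap\overline U)\cdot\overline{N_V}$ (both factors being normalized by $M_V$), hence preserves $K_{\overline U}$.

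The main obstacle is the reverse inclusion, namely showing that for every $a\in\Delta\setminus\Delta_V$ the positive root group $U_a$ fails to preserve $K_{\overline U}$. Since $U_a(k)$ is a $p$-group and $V/K_{\overline U}$ is one-dimensional over the characteristic-$p$ field $C$, were $U_a$ to preserve $K_{\overline U}$ it would act trivially on the quotient; together with $U_{-a}\subset\overline U$, the rank-one subgroup $G_a=\langle U_a,U_{-a}\rangle$ would then preserve $K_{\overline U}$ and act trivially on $V/K_{\overline U}$, forcing $G_a\cdot v_0\subset v_0+K_{\overline U}$ for $v_0\in V^{U(k)}$. Ruling this out directly in defining characteristic is delicate because the naive $SL_2$ weight argument breaks down (e.g.\ the Steinberg of $SL_2(\mathbb F_p)$ has a $U$-fixed vector of trivial $T$-character yet is nontrivial). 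The cleanest route I anticipate is to invoke the structural input of Curtis's classification (Theorem \ref{Curtis}): the contragredient formula $(\psi,\Delta)\mapsto(w_0(\psi)^{-1},-w_0(\Delta))$ on Curtis parameters is the restriction to $G(k)$ of the algebraic-representation identity $L(\lambda)^*=L(-w_0\lambda)$, so combined with the already-established $\psi_{V^*}=w_0(\psi_V)^{-1}$ and uniqueness in Theorem \ref{Curtis}, this pins down $\overline\Delta_{V^*}=-\Delta_V$.
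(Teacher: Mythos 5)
Your computation of $\psi_{V^*}$ is correct and is essentially the paper's: both arguments use the direct-sum decomposition of $V$ (Proposition \ref{deck}) to identify the relevant fixed line of $V^*$ with the dual of a fixed line of $V$, then apply Lemma \ref{overU} to convert between the pairs $(T,U)$ and $(T,\overline U)$.

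The second half has a genuine gap. First, a local slip: in your ``routine'' inclusion, $M_V$ does \emph{not} normalize $\overline U$ (the factor $M_V\cap\overline U$ in your decomposition is not normal in $M_V$); the correct reason $M_V(k)$ preserves $K_{\overline U}=(1-\overline U(k))V$ is that this kernel equals $(1-\overline N_V(k))V$ by Remark \ref{dec}, and the latter is visibly $M_V(k)$-stable. More seriously, your proposed resolution of the reverse inclusion does not close: knowing $\psi_{V^*}=w_0(\psi_V)^{-1}$ together with uniqueness in Theorem \ref{Curtis} does \emph{not} pin down $\Delta_{V^*}$, since many subsets $J\subset\Delta_{\psi_{V^*}}$ are admissible parameters for that same character. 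What would pin it down is the full dictionary between Curtis parameters and highest weights plus $L(\lambda)^*\simeq L(-w_0\lambda)$, but that requires the $z$-extension reduction to simply connected derived group and the identification $\Delta_V=\{a\in\Delta\,:\,\langle\nu,a^\vee\rangle=0\}$ --- machinery the paper only deploys later (in the proof of Proposition \ref{weight}) and which you do not establish. You are asserting the statement to be proved rather than deriving it.

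You were in fact one step from a two-line finish. The inclusion you do have, $M_V\overline U\subset\overline P_{V^*}$, i.e.\ $-w_0(\Delta_V)\subset\Delta_{V^*}$, holds for \emph{every} irreducible $V$. Apply it with $V^*$ in place of $V$: since $(V^*)^*\simeq V$, this gives $-w_0(\Delta_{V^*})\subset\Delta_V$, and applying the involution $-w_0$ yields $\Delta_{V^*}\subset -w_0(\Delta_V)$. This is exactly how the paper concludes, with no input from algebraic representation theory.
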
  
 \begin{proof} By Lemma \ref{overU} it is equivalent to describe   the  parameters   $ (\overline \psi_{V^{*}} , \overline \Delta_{V^{*}})$ for the opposite pair $(T,\overline U)$. The direct decomposition  $V=V^{U(k)}\oplus (1- \overline U(k))V$ implies 
 $$(V^{*})^{\overline U(k)} = (V_{\overline U(k)})^{*} \simeq (V^{U(k)})^{*} \ . $$
  The group $T(k)$ acts on the line $V^{U(k)}$  by the character $\psi_{V} $ and on $(V^{U(k)})^{*}$ by the character $\psi_{V}^{-1}$.  Hence $\overline \psi_{V^{*}} = \psi_{V}^{-1}$.

 The space  $(V^{*})^{\overline U(k)}$ is the subspace of elements on $V^{*}$ vanishing on 
  $(1 - \overline U (k) )V$. This space is stable by $M_{V}(k)$  because the  direct decomposition of $V$ for $ B$ is the same than for $ P_{V}$ (Remark \ref{dec}). Hence $M_{V}\overline U \subset \overline P_{V^{*}}$, equivalently $- \Delta_{V}\subset \overline \Delta_{V^{*}}= w_{0}(\Delta_{V^{*}})$. 
  As $V$ is isomorphic to the contragredient of $V^{*}$ and $-w_{0}$ is an involution on $\Delta$,  we have also the inclusion in the other direction.
  \end{proof}

\begin{remark}\label{overUR}
{\rm   In general,  $-w_{0}$ does not act by $\id$ on $\Delta$ (for example for $G=GL(3)$), hence the stabilizer $\overline P_{V}$ of
 $V^{\overline U (k)}$ in $G(k)$ is not the opposite of $P_{V}$,   the $M$-regularity of $V$ is not equivalent to the $M$-coregularity of $V$.
 The  $M$-regularity of $V$ is equivalent to the $M$-coregularity of $V^{*}$.}
\end{remark}

\begin{proposition}\label{deck} We have the $M(k)$-equivariant direct decomposition:
$$V \ =  \ V^{N(k)}\  \oplus \  (1-\overline N(k))V^{N(k)} \ = \  V^{N(k)}\  \oplus \  (1-\overline N(k))V \quad . $$
\end{proposition}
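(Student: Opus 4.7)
The plan is to reduce the statement to showing that the natural projection
$$V^{N(k)} \hookrightarrow V \twoheadrightarrow V_{\overline N(k)}$$
is an $M(k)$-equivariant isomorphism, and then to deduce the second direct decomposition from that isomorphism and the first from the second by a nilpotency argument on the augmentation ideal of $C[\overline N(k)]$.

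For the isomorphism, both $V^{N(k)}$ and $V_{\overline N(k)}$ are irreducible $C$-representations of $M(k)$: the first by \cite{CE} Theorem~6.12 applied to $V$ and $P = MN$, and the second because its $C$-linear dual $(V_{\overline N(k)})^{*} \simeq (V^{*})^{\overline N(k)}$ is irreducible by the same theorem applied to the irreducible $G(k)$-representation $V^{*}$ and the opposite parabolic $\overline P = M\overline N$. Hence the $M(k)$-equivariant map $V^{N(k)} \to V_{\overline N(k)}$ is either zero or an isomorphism. To exclude zero, I would exploit the nonzero one-dimensional subspace $V^{U(k)} \subset V^{N(k)}$ (the inclusion holds since $N \subset U$): the basic direct decomposition $V = V^{U(k)}\oplus (1-\overline U(k))V$ already used in the proof of Lemma~\ref{dual}, together with the obvious inclusion $(1-\overline N(k))V \subset (1-\overline U(k))V$, forces $V^{U(k)}\cap (1-\overline N(k))V = 0$, so $V^{U(k)}$ is not annihilated by the projection.

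This isomorphism immediately yields $V^{N(k)}\cap (1-\overline N(k))V = 0$ and $V = V^{N(k)} + (1-\overline N(k))V$, i.e.\ the second equality in the statement. For the first one it remains to check $(1-\overline N(k))V = (1-\overline N(k))V^{N(k)}$: given $v = \sum_{i}(1-\overline n_{i})w_{i}$, I would decompose each $w_{i} = w_{i}^{(0)} + w_{i}^{(1)}$ along the second decomposition, with $w_{i}^{(0)}\in V^{N(k)}$ and $w_{i}^{(1)}\in (1-\overline N(k))V$, which places $v$ in $(1-\overline N(k))V^{N(k)} + I^{2}V$, where $I$ denotes the augmentation ideal of $C[\overline N(k)]$. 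Iterating this splitting and using that $I$ is nilpotent on the finite-dimensional space $V$ (since $\overline N(k)$ is a finite $p$-group acting in characteristic $p$) brings $v$ into $(1-\overline N(k))V^{N(k)}$ after finitely many steps. The $M(k)$-equivariance throughout is automatic, since $M$ normalises both $N$ and $\overline N$.

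The only substantive obstacle is the nonvanishing of the map $V^{N(k)}\to V_{\overline N(k)}$; this is precisely where the natural-characteristic hypothesis on $C$ is essential, and it reduces cleanly to the known $B$-case.
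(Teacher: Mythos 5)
Your proof is correct, but it takes a genuinely different route from the paper: the paper's entire proof of Proposition \ref{deck} is a citation of \cite{CE} Theorem 6.12, which asserts the decomposition directly, whereas you actually derive the general-$P$ statement from two more primitive inputs. Those inputs are (a) the irreducibility of $V^{N(k)}$ and of $(V^{*})^{\overline N(k)}$ as $M(k)$-modules (still \cite{CE} Theorem 6.12), and (b) the Borel case $V=V^{U(k)}\oplus(1-\overline U(k))V$, which the paper also uses without proof in Lemma \ref{dual}. Be aware that (b) is itself the $P=B$ instance of the proposition you are proving, so what you have is a reduction of the general case to the extreme case rather than a self-contained proof; since the paper treats the $B$-case as known, this is legitimate, but it should be stated as such. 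The individual steps all check out: $V^{U(k)}\subset V^{N(k)}$ and $(1-\overline N(k))V\subset(1-\overline U(k))V$ force the composite $V^{N(k)}\to V_{\overline N(k)}$ to be nonzero, Schur's lemma upgrades it to an isomorphism, and this yields the second equality. For the first equality your iteration works because at stage $n$ one has $I^{n}V\subset I^{n}V^{N(k)}+I^{n+1}V$ with $I^{n}V^{N(k)}\subset IV^{N(k)}=(1-\overline N(k))V^{N(k)}$, and the augmentation ideal $I$ of $C[\overline N(k)]$ is nilpotent since $\overline N(k)$ is a finite $p$-group and $\charf(C)=p$. What your approach buys is an explicit logical skeleton (general case $=$ Borel case $+$ irreducibility $+$ a formal nilpotency argument), at the cost of still resting on the same external reference for its base case.
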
 

\begin{proof} (\cite{CE} Theorem 6.12).
\end{proof}

 \begin{remark} \label{dec} {\rm The decomposition is the same for $P=P_{V}$ than for $P=B$ because   $V^{U(k)}= V^{N_{V}(k)}$  by definition de $P_{V}$.}
   \end{remark}

\begin{proposition} \label{weight}
 For $g\in G(k)$, the image of $gV^{U(k)} $ in $V_{\overline N(k)}$ is not $0$ if and only if $g\in \overline  P(k) P_{V}(k)$.
  \end{proposition}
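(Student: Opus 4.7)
The plan is to handle the two directions separately, reducing the nontrivial one to an induction on the Weyl group.

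Fix a nonzero $v_0 \in V^{U(k)}$ and let $\mu: V \to V_{\overline N(k)}$ denote the natural projection. For $(\Leftarrow)$, write $g = \bar{p} h$ with $\bar{p} \in \overline P(k)$, $h \in P_V(k)$. Proposition~\ref{1} implies $h$ stabilizes $V^{U(k)}$ (acting by a nonzero scalar), so $gV^{U(k)} = \bar{p} V^{U(k)}$. Writing $\bar{p} = m\bar{n} \in M(k)\overline N(k)$ and using that $\mu$ is $\overline N(k)$-invariant and $M(k)$-equivariant, we find $\mu(\bar p V^{U(k)}) = m \cdot \mu(V^{U(k)})$. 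By Proposition~\ref{deck} the restriction of $\mu$ to $V^{N(k)}$ is an isomorphism onto $V_{\overline N(k)}$, so $\mu(V^{U(k)}) \neq 0$ (using $V^{U(k)} \subset V^{N(k)}$), and $m$ acts invertibly, giving $\mu(gV^{U(k)}) \neq 0$.

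For $(\Rightarrow)$, suppose $g \notin \overline P(k) P_V(k)$ and use the Bruhat decomposition $G(k) = \bigsqcup_{w \in W^{M, M_V}} \overline P(k) w P_V(k)$, where $W^{M, M_V}$ is a set of minimal-length representatives of $W_M \backslash W / W_{M_V}$. Writing $g = \bar{p} w h$ with $w \neq 1$, the same computation as in $(\Leftarrow)$ reduces the problem to proving $\mu(w v_0) = 0$, equivalently $w v_0 \in (1 - \overline N(k))V$, for every $w \in W \setminus W_M W_{M_V}$. (One passes from minimal to arbitrary representatives by the same $M(k)$-equivariance argument, using that any lift of $W_M$ sits inside $M(k)$, which normalizes $\overline N(k)$.)

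I prove this by induction on $\ell(w)$. For the base case $\ell(w) = 1$, $w = s_\beta$ must satisfy $\beta \notin \Delta_M \cup \Delta_{M_V} = \Delta_M \cup \Delta_V$. Since $\beta \notin \Delta_V$, the group $G_\beta$ does not stabilize $V^{U(k)}$, so the $G_\beta(k)$-subrepresentation generated by $v_0 \in V^{U_\beta(k)}$ has dimension $\geq 2$. The rank-one ($SL_2$-type) analog of Proposition~\ref{deck} then yields $s_\beta v_0 \in (1 - U_{-\beta}(k))V$; and since $\beta \notin \Phi_M$ gives $U_{-\beta} \subset \overline N$, we conclude $s_\beta v_0 \in (1-\overline N(k))V$.

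For the inductive step $\ell(w) \geq 2$, choose a reduced expression $w = s_\beta w'$; minimality forces $\beta \notin \Delta_M$, and reducedness gives $(w')^{-1}(\beta) \in \Phi^+$, so $w' v_0 \in V^{U_\beta(k)}$. If $(w')^{-1}(\beta) \in \Phi_{M_V}$, then $G_\beta$ stabilizes the line $w' V^{U(k)}$, so $w v_0 = c \cdot w' v_0$; a short root computation shows that $w' \in W_M W_{M_V}$ would force $w \in W_M W_{M_V}$, so by contraposition $w' \notin W_M W_{M_V}$, and the inductive hypothesis applied to $w'$ gives the desired conclusion. Otherwise $(w')^{-1}(\beta) \notin \Phi_{M_V}$ and $G_{(w')^{-1}(\beta)}$ fails to stabilize $V^{U(k)}$; transporting the base-case rank-one argument through $w'$ yields $s_\beta(w' v_0) \in w' (1 - U_{-(w')^{-1}(\beta)}(k))V = (1-U_{-\beta}(k))V \subset (1 - \overline N(k))V$. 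The main obstacle is the rank-one decomposition in the required (possibly non-irreducible) generality, which should follow from the $BN$-pair analysis underlying Proposition~\ref{deck}.
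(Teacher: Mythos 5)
Your reduction of the hard implication to the single claim ``$s_\beta v_0\in(1-U_{-\beta}(k))V$ whenever $G_\beta(k)$ does not stabilize the line $Cv_0$'' is a legitimate reorganization (the easy direction, the Bruhat bookkeeping, and the two cases of the induction are all fine), and it is genuinely different from the paper, which instead reduces to $G_{der}$ simply connected via a $z$-extension and then invokes the algebraic highest-weight decomposition of $V=F(\nu)|_{G(k)}$, separating $V^{N(k)}$ from $(1-\overline N(k))V$ by weights of the \emph{algebraic} torus. But the step you flag as ``the main obstacle'' is not a routine verification: the rank-one statement you need is false at the level of the cyclic $G_\beta(k)$-module $M=C[G_\beta(k)]v_0$, which is all your setup gives you access to. Indeed $M$ is a quotient of $Y=\operatorname{ind}_{B_\beta(k)}^{G_\beta(k)}\chi=C[G_\beta(k)]\otimes_{C[B_\beta(k)]}C_\chi$ with $v_0$ the image of $1\otimes 1$, and in $Y$ the vector $s_\beta\otimes 1$ is actually \emph{fixed} by $U_{-\beta}(k)$ (since $s_\beta^{-1}U_{-\beta}s_\beta=U_\beta$ and $\chi$ is trivial on $U_\beta(k)$) and spans a $U_{-\beta}(k)$-orbit of length one in $G_\beta(k)/B_\beta(k)$; hence its image in $Y_{U_{-\beta}(k)}$ is nonzero, even though $Y$ is cyclic on a $U_\beta(k)$-fixed vector and has dimension $\geq 2$. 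So no ``$BN$-pair analysis'' of the module generated by $v_0$ can yield $s_\beta v_0\in(1-U_{-\beta}(k))V$; you must use relations in the ambient irreducible $G(k)$-module $V$ that are not visible in $M$.

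Moreover, the natural finite-group repair does not close the gap either. One can pass to the standard Levi $M_\beta$ with $\Delta_{M_\beta}=\{\beta\}$ and the irreducible $M_\beta(k)$-module $V^{N_\beta(k)}$ (which has dimension $\geq2$ since $\beta\notin\Delta_V$), reducing everything to the irreducible rank-one case of the proposition itself. There a $T(k)$-eigenvalue comparison shows $\mu(s_\beta v_0)=0$ \emph{provided} $\psi_V\neq{}^{s_\beta}\psi_V$ on $T_\beta(k)$; but when $\psi_V={}^{s_\beta}\psi_V$ (already for the Steinberg module of $SL_2(\mathbb F_p)$, where $X^{p-1}$ and $Y^{p-1}$ carry the same $T(\mathbb F_p)$-character) the finite torus cannot separate $s_\beta v_0$ from $v_0$, and one is forced back to the algebraic weight decomposition --- which is exactly why the paper reduces to the simply connected case and argues with weights of $F(\nu)$, citing \cite{Her}. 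So the skeleton of your induction is sound, but its base case is precisely the nontrivial content of the proposition, and as written the argument does not establish it.
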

  
\begin{proof}  
It is clear that the non vanishing condition on $g$ depends only on $\overline P(k) gP_{V}(k)$ and that the image is not $0$ when $g=1$. We prove that the image of $gV^{U(k)} $ in $V_{\overline N(k)}$ is $0$ when $g$ does not belong to $\overline  P(k) P_{V}(k)$.
 
a)   We reduce to the case where $G_{der}$ is simply connected 
by choosing a $z$-extension defined over $k$,
$$1\to R \to G_{1} \to G \to 1 \ , $$ where $R\subset G_{1}$ is a central induced $k$-subtorus and  $G_{1}$ is a reductive connected $k$-group with $G_{1,der}$ simply connected. The sequence of rational points
  $$1\to R(k) \to G_{1}(k) \to G(k) \to 1 $$
is exact. The parabolic subgroups of $G_{1}$ inflated from $P,P'$ are $P_{1}=M_{1}N , P'_{1}=M'_{1}N' $  where
  $1\to R \to M_{1} \to M \to 1$ and  $1\to R \to M'_{1} \to M' \to 1$ are  $z$-extensions defined over $k$. 
We consider $V$ as an irreducible representation  of $G_{1}(k)$ where $R(k)$ acts trivially.    The image of $G_{1}(k)- \overline P_{1}(k) P'_{1}(k)$ in $G(k) $ is $G(k)-\overline P(k)P'(k)$. For   $g_{1}\in G_{1}(k) - \overline P_{1}(k)P_{1}'(k)$ of image $g \in G (k)- \overline P(k)P'(k)$, the image of 
 $ g_{1}V^{N'(k)} $ in $V_{\overline N(k)}$ is $0$ if and only if the  image of 
 $ gV^{N'(k)} $ in $V_{\overline N(k)}$ is $0$.   
     
 b) The proposition can be reformulated in terms of Weyl groups because the equality  depends only on the image of $g$ in
$\overline  P(k) \backslash G(k)/ P'(k) = W_{M} \backslash W /W_{M'}$. We denote $\dot w$ a representative of $w\in W$ in $G(k)$. The proposition says that the  image of 
 $ \dot w V^{N'(k)}  $ in $V_{\overline N(k)}$ is $0$ if $w\in W$ does not belong to $W_{M}W_{M'}$ under the hypothesis $W_{V}=W_{M}$ or $W_{V}=W_{M'}$ or  $W_{V}\subset W_{M}\cap W_{M'}$. 
%We can restrict to distinguished representatives $w\in D_{M}^{-1}\cap D_{M'}$ and $w\neq 1$, where $D_{M}:=\{w\in W \ | \ w(\Phi_{M}^{+})\subset \Phi^{+}\}.$ Carter 2.7.3 

 c) We suppose that $G_{der}$ is simply connected. 
 Then we recall that $V$ is the restriction of an irreducible algebraic representation $F(\nu)$ of $G $ of highest  weight $\nu$ equal to a  $q$-restricted character  of $T$ (\ref{HerW} Appendix 1.3). The stabilizer $W_{\nu}$ of $\nu$ in $W$
is $W_{V}$,   $F(\nu)^{N} $ is  the irreducible algebraic representation $F(\nu)$ of $M$ of highest  weight $\nu$, and is equal to the sum of all weight spaces $F(\nu)_{\mu}$ with $\nu-\mu\in \mathbb Z \Phi_{M}$; for $w\in W$, $w\nu$ is a weight of $F(\nu)^{N}$ if and only if $w\in W_{M}W_{V}$.
  (\cite{Her} Lemma 2.3, and proof of lemma 2.17 in the split case). 
The space $V^{N(k)}$ is the restriction of $F(\nu)^{N} $.

We deduce that  the decomposition $V= V^{N'(k)} \oplus (1-\overline N '(k))V$, the weights  of $T$ in $V^{N'(k)}$ and the weights in $(1-\overline N '(k))V$ are distinct; 
 the weights of $V_{\overline N(k)}$  and  of $V^{N(k)}$ are the same; 
  the  image of 
 $ \dot w V^{N'(k)}  $ in $V_{\overline N(k)}$ is not $0$ if and only if there exists  a  weight $\mu$ in  $F(\nu)^{N'}$ such that $w(\mu)$ is a weight of $F(\nu)^{N}$.  
  
  This implies that, for $g\in G(k)$, the image of $gV^{U(k)} $ in $V_{\overline N(k)}$ is not $0$ if and only if $g\in \overline  P(k) P_{V}(k)$.

\end{proof}

\begin{corollary} Let $P'=M'N'$ be another standard parabolic subgroup.
The image of  $gV^{N'(k)}$ in $V_{\overline N(k)}$ is not $0$ if and only if $g\in \overline  P(k)  P_{V}(k) P'(k) $.

  \end{corollary}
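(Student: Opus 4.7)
My plan is to reduce the corollary to Proposition \ref{weight} by expressing $V^{N'(k)}$ as the $M'(k)$-span of the single line $V^{U(k)}$, and then exploiting that the vanishing of the image in $V_{\overline N(k)}$ depends only on the double coset $\overline P(k)\, g\, P'(k)$.

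The first step is the following observation. Since $P'\supset B$ is standard, $N'\subset U$, so $V^{U(k)}\subset V^{N'(k)}$. The $M'(k)$-representation $V^{N'(k)}$ is irreducible (\cite{CE} Theorem~6.12), and contains the nonzero line $V^{U(k)}$, so
\[
V^{N'(k)} \;=\; \mathrm{span}_{C}\bigl(M'(k)\cdot V^{U(k)}\bigr).
\]
Moreover $N'(k)$ acts trivially on $V^{N'(k)}$ by definition, so $P'(k)=M'(k)N'(k)$ stabilises $V^{N'(k)}$ setwise.

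The second step is to check that the (non)vanishing of the image of $gV^{N'(k)}$ in $V_{\overline N(k)}$ depends only on $\overline P(k)\, g\, P'(k)$. Replacing $g$ by $\bar p\, g\, p'$ with $\bar p=m\bar n\in M(k)\overline N(k)=\overline P(k)$ and $p'\in P'(k)$ gives $(\bar p g p')V^{N'(k)}=\bar p\, g V^{N'(k)}$ since $p'$ preserves $V^{N'(k)}$; and in $V_{\overline N(k)}$ the element $\bar n$ acts trivially while $m$ acts invertibly, so the image is scaled by an automorphism of $V_{\overline N(k)}$.

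With these reductions the two implications are immediate consequences of Proposition \ref{weight}. For the ``if'' direction, the double-coset invariance lets us assume $g\in P_V(k)$; then $gV^{N'(k)}\supset gV^{U(k)}$, and Proposition \ref{weight} applied to $g\in P_V(k)\subset \overline P(k)P_V(k)$ says the image of $gV^{U(k)}$ in $V_{\overline N(k)}$ is nonzero. For the ``only if'' direction, if the image of $gV^{N'(k)}$ is nonzero, the identity $V^{N'(k)}=\mathrm{span}_C M'(k)V^{U(k)}$ forces some $m'\in M'(k)$ with $gm'V^{U(k)}$ having nonzero image, so Proposition \ref{weight} gives $gm'\in \overline P(k)P_V(k)$, and hence $g\in \overline P(k)P_V(k)M'(k)\subset \overline P(k)P_V(k)P'(k)$.

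The only point that requires a little care is the first step, namely that $V^{N'(k)}$ really is generated as an $M'(k)$-module by $V^{U(k)}$; this uses both the irreducibility of $V^{N'(k)}$ under $M'(k)$ and that $V^{U(k)}$ is a nonzero subspace of it. Beyond this, the argument is essentially a repackaging of Proposition \ref{weight}, so I do not anticipate any further obstacle.
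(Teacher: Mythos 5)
Your proof is correct and follows essentially the same route as the paper: the key identity $V^{N'(k)}=\sum_{h\in M'(k)}hV^{U(k)}$ (from irreducibility of $V^{N'(k)}$ as an $M'(k)$-module and the inclusion $V^{U(k)}\subset V^{N'(k)}$) combined with Proposition \ref{weight}. You merely spell out the double-coset invariance and the two implications that the paper leaves implicit.
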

 \begin{proof} We have $V^{N'(k)}= \sum_{h\in M'(k)}hV^{U(k)}$ because the right hand side is $N'(k)$-stable and $V^{N'(k)}$ is an irreducible representation of $M'(k)$.
 \end{proof} 

\begin{remark}\label{rregu}  {\rm  We have $\overline P P_{V}  P'   =\overline  P  P' $  if and only if  $M_{V} \subset \overline P P'$.
This is true when $V$ is $M$-regular or $M'$-regular.  The reverse is true when $P=P'$ but not in general.
 The property $M_{V} \subset \overline P P'$ can be translated into equivalent properties in the Weyl group:   $W_{V}\subset W_{M}W_{M'}$, or in the set of simple roots: $\Delta_{V} \subset \Delta_{M} \cup \Delta_{M'}$ and any simple root in  $\Delta_{V} \cap \Delta_{M}$ which is not in  $\Delta_{M'}$ is orthogonal to any simple root in  $\Delta_{V} \cap \Delta_{M'}$ which is not in  $\Delta_{M}$.   }
\end{remark}

In our study of Hecke operators we will use the following particular case:

\begin{corollary}\label{basic} 

i. \ The restriction to $V^{\overline N(k)}$ of the quotient map  $V\to V_{N(k)}$ is an isomorphism.

ii. \ For $g\in G(k)$, the image of  $g V^{\overline N(k)}$ in $V_{N(k)}$ is not $0$ if and only if  $g\in P(k) \overline P_{V}(k)  \overline P(k)$. 
 
\end{corollary}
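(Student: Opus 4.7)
The corollary is the ``opposite'' version of the preceding Proposition~\ref{deck} and the corollary of Proposition~\ref{weight}, so my approach is to derive it by symmetry, applying those results not to the standard Borel $B=TU$ but to the opposite Borel $\overline B=T\overline U$. The only thing one has to check is that this really is allowed: the statements of Proposition~\ref{deck} and Proposition~\ref{weight} are formulated for a standard pair $(B,P)$, whereas $\overline P$ is not standard with respect to $B$. But $\overline P$ is standard with respect to $\overline B$, and the whole apparatus of Theorem~\ref{Curtis} together with Lemma~\ref{overU} and Lemma~\ref{dual} tells us that the irreducible representation $V$ admits parameters $(\overline\psi_V,\overline\Delta_V)$ relative to $(T,\overline U)$, with $\overline P_V=w_0P_Vw_0^{-1}$ playing the role that $P_V$ plays on the ``$B$-side''. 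So every result of the section applies verbatim with $B,U,N,\overline N,P_V$ replaced by $\overline B,\overline U,\overline N,N,\overline P_V$.

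For part~(i), I apply Proposition~\ref{deck} in this opposite setup: it furnishes the $M(k)$-equivariant direct decomposition
\[
V \ = \ V^{\overline N(k)} \ \oplus \ (1-N(k))V.
\]
Since the kernel of the quotient $V\to V_{N(k)}$ is exactly $(1-N(k))V$ (this is the definition of coinvariants), the restriction of the quotient map to $V^{\overline N(k)}$ is injective with image $V_{N(k)}$; that is (i).

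For part~(ii), I apply the corollary after Proposition~\ref{weight} in the same opposite setup, with $P'$ taken to be $\overline P$ (itself standard for $\overline B$). The statement obtained reads: the image of $gV^{\overline N(k)}$ in $V_{N(k)}$ is nonzero if and only if $g$ lies in $P(k)\,\overline P_V(k)\,\overline P(k)$, which is exactly (ii).

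The only potential obstacle is to be convinced that this ``pass to the opposite Borel'' move is legitimate; but Lemma~\ref{overU} is tailored precisely for that, since it identifies the parabolic subgroup attached to $V$ relative to $(T,\overline U)$ as $\overline P_V$. Once that identification is in place, both parts are formal consequences of the already-proved propositions, and no new computation is required.
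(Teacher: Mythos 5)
Your proposal is correct and matches the paper's (implicit) derivation: the corollary is stated there without proof as the "particular case" obtained by running Proposition \ref{deck} and the corollary of Proposition \ref{weight} with the roles of $B$ and $\overline B$ (hence of $N$ and $\overline N$, and of $P_V$ and $\overline P_V$) exchanged, taking $P'=\overline P$. Your justification via Lemma \ref{overU} that this swap is legitimate is exactly the right point to check, and both parts then follow as you say.
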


  \section{Representations of $G(F)$}
\subsection{Notations} \label{Not}Let $C$ be  an algebraically closed  field of positive characteristic $p$, let $F$ be a local non archimedean  field of finite residue field $k$ of characteristic $p$ and of cardinal $q$, of ring of integers $o_{F}$ and uniformizer $p_{F}$, and let $G$ be a reductive connected group over $F$. We fix a   minimal parabolic $F$-subgroup $B$ of $G$ with  unipotent radical $U$ and   maximal $F$-split $F$-subtorus $S$.  The group $B$ has the Levi decomposition $B=ZU$ where  $Z$ is the $G$-centralizer of $S$. 
Let $\Phi(S,U)$ be the set of roots of $S$  in $U$ (called positive for $U$) and $\Delta \subset \Phi (S,U)$ the subset of simple roots. 
A parabolic $k$-subgroup $P$ of $G$ containing $B$ is called standard (for $U$), and has a unique  Levi decomposition $P=MN$ with Levi subgroup $M$ containing $Z$ (called standard), and unipotent radical $N=P\cap U$. The group $(M\cap B) = Z (M\cap U)$ is a minimal parabolic $F$-subgroup of $M$ and $\Delta_{M} =\Delta \cap \Phi(S, M\cap U)$ are the simple roots of $\Phi(S,M\cap U)$. This determines a bijection between the  subsets of $\Delta$,  the standard parabolic $k$-subgroups of $G$, and their standard Levi subgroups. 

The natural homomorphism $v: S(F)\to \Hom (X^{*}(S), \mathbb Z)$, where $X^{*}(S)$ is the group of $F$-characters of $S$,  extends uniquely to an homomorphism $v: Z(F)\to \Hom (X^{*}(S), \mathbb Q)$  with kernel the maximal compact subgroup of $Z(F)$. 
For a standard   Levi subgroup $M$, we denote by  $Z(F)^{+_{M}}$  the monoid  of  elements $z$ in $Z(F)$ which are $M$-positive, i.e. 
 $$ a(v_{Z}(z)) \geq 0 \  \ {\rm for \ all  \ } a\in \Delta-\Delta_{M}   \quad . $$ 
 When these inequalities are strict, $z$ is called strictly $M$-positive.
 Analogously we define the monoid $Z(F)^{-_{M}} $ of elements  in $Z(F)$ which are $M$-negative, and the strictly $M$-negative elements.
 
 Let $\overline B=Z\overline U$ be the opposite parabolic subgroup of $B$ of unipotent radical $\overline U$. The standard Levi subgroups for $U$ and for $\overline U$ are the same. The roots of $S$ in $\overline U$ are the positive roots for $\overline U$ and  the negative roots for $U$; the set $\overline \Delta$ of simple positive roots  for $\overline U$ is the set  $-\Delta$ of simple negative roots for $U$. The monoid $Z(F)^{+_{M}}$ of   elements in $Z(F)$ which are $M$-positive for $U$ is the set of elements in $Z(F)$ which are $M$-negative for $\overline U$.

\bigskip In the building of the adjoint group $G_{ad}$ over $F$  we choose a special vertex in the apartment attached to $S$ and we write $K$ for the corresponding special parahoric subgroup, as in \cite{HV} 6.1. The quotient of $K$ by its pro-$p$-radical $K_{+}$ is the group of $k$-points of a connected reductive   $k$-group  $ G_{k}$. The group $K/K_{+}$ is $ G_{k}(k)$. For $H= B,S,U, Z, P, M, N$, the image in $G_{k}(k)$ of  $H(F) \cap K$
   is the group of $k$-points of a connected  $k$-group  $ H_{k}$.  Note that  $ B_{k}$ is   a minimal parabolic subgroup of $G_{k}$, $ S_{k}$ is a maximal $k$-split torus in $ B_{k}$, $ Z_{k}$  being the  centralizer of $ S_{k}$ in $ G_{k}$,  is a maximal $k$-subtorus of $ B_{k}$,  $ B_{k}=  Z_{k} U_{k}$ is a Levi decomposition, there is a bijection between $\Delta$ and the set $ \Delta_{k}$ of simple roots of $ S_{k}$ (with respect to $ U_{k}$), $ P_{k}$ is a standard parabolic subgroup of $ G_{k}$, of standard Levi subgroup $ M_{k}$ and unipotent radical $ N_{k}$, the set $ \Delta_{k, M_{k}}$ of simple roots of $ S_{k}$ in $ M_{k}$ is the image of $\Delta_{M}$ by the bijection above.
We shall usually suppress the indices $k$ from the notation,  write $H_{0}=H(F)\cap K$. With the notations of the chapter on representations of $G(k)$, we have $T(k)=Z(k)$. 

\bigskip  {\sl We now fix $V$  an irreducible $C$-representation of $G(k)$ of parameters $(\psi_{V},\Delta_{V})$ (Definition \ref{Par}), a standard parabolic subgroup   $P=MN$ different from $G$ and  an element  $s\in S(F)$  which is central in $M(F)$ and strictly $M$-positive.}
 
\subsection{$\mathcal S'$ is a localisation} We see also $V$ as a smooth $C$-representation of $K$, trivial on $K_{+}$. We apply Proposition \ref{I0} to the group $G(F)$, the compact subgroup $K$, and  the closed subgroup $P(F)=M(F)N(F)$. As $K$ is a special parahoric subgroup, the Iwasawa decomposition $G(F)=P(F)K$ is valid.
 We get a $G(F)$-equivariant linear map
 \begin{equation}\label{I0V}
 I_{0}\ : \  \ind_{K}^{G(F)}V \ \to \ \Ind_{P(F)}^{G(F)}\, (\ind_{ M_{0}}^{M(F)}V_{N (k)})
 \end{equation}
 which satisfies $I_{0}(bf)=\mathcal S '(b) I_{0}(f)$ for $b$ in $\mathcal H (G(F),K,V)$, $f$ in $\ind_{K}^{G(F)}V$,  for an algebra homomorphism
 \begin{equation}
 \mathcal S'=\mathcal S'_{M,G}: \mathcal H (G(F),K,V)\to \mathcal H (M(F),M_{0},V_{N(k)})
  \end{equation}
  given by Proposition \ref{I01}.
To study the intertwiner $I_{0}$, we need to know more about the morphism $\mathcal S '$. We use the Satake morphism $\mathcal S $ and Proposition \ref{S'}. We denote by $\mathcal S '_{G}$ and $\mathcal S_{G}$ the morphisms $\mathcal S '_{Z,G}$ and $\mathcal S _{Z,G}$ in Proposition \ref{S'} when $M=Z$.  We analogously define 
$\mathcal S '_{M}$ and $\mathcal S_{M}$  with a commutative diagram :

 $$\xymatrix{ \mathcal H (M,M_{0},(V^{*})^{N(k)})\ar[r]^<(0.2){  \mathcal S_{M}}\ar[d] & \mathcal H (Z,Z_{0},(V^{*})^{U(k)}) \ar[d] \\
    \mathcal H (M,M_{0},V _{N(k)})^{0}\ar[r]^<(0.15){{\mathcal S_{M} '} ^{0}} &  \mathcal H (Z,Z_{0},V_{U (k)} )^{0} .
    }$$
 
 By Proposition \ref{S'}, the morphism $\mathcal S '$ is injective  and 
 \begin{equation}\label{trans}
 \mathcal S '_{G} = \mathcal S '_{M} \circ \mathcal S ' 
 \end{equation}
 because  the Satake morphism $\mathcal S$  is injective  \cite{HV} and satisfies 
 $\mathcal S _{G} = \mathcal S _{M} \circ \mathcal S $  \cite{HV}.   
 
   We see   $\psi_{V^{*}}$ as a smooth character of $Z_{0}$ (Lemma \ref{dual}). Let $Z_{V^{*}}$ be the stabilizer of $\psi_{V^{*}}$ in $Z(F)$,
 $$Z_{V^{*}}=\{ z\in Z(F) \} \  | \ \psi_{V^{*}} (zxz^{-1})=\psi_{V^{*}}(x) \  \ {\rm for \ all \ } x\in Z_{0}\ \} \ .
 $$
\begin{proposition}\label{im}
 The image of the map $\mathcal S '_{G}: \mathcal H (G(F),K,V)\to \mathcal H (Z(F),Z_{0},V_{U(k)})$ is equal to $\mathcal H (Z(F)^{+}\cap Z_{V^{*}},Z_{0},V_{U(k)})$ . 
 \end{proposition}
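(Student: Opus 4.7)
The plan is to reduce the statement to the analogous description of the image of the Satake morphism $\mathcal S_{G}$ proved in \cite{HV}, using the comparison diagram of Proposition \ref{S'} (applied with $M = Z$) as the translating device between the $V^{*}$-picture and the $V$-picture.

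First I would apply Proposition \ref{S'} with $M$ replaced by $Z$. The vertical arrows $\iota$ and $\iota_{Z}$ are bijective anti-isomorphisms, and commutativity of that diagram reads $\iota_{Z} \circ \mathcal S_{G} = \mathcal S'_{G} \circ \iota$ as a relation between set maps. Since $\iota$ is a bijection, the image of $\mathcal S'_{G}$ inside $\mathcal H(Z(F),Z_{0},V_{U(k)})$ equals $\iota_{Z}$ applied to the image of $\mathcal S_{G}$. So everything reduces to two things: knowing the image of $\mathcal S_{G}$ on the $V^{*}$-side, and tracking how $\iota_{Z}$ acts on supports and on characters of $Z_{0}$.

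Next I would invoke from \cite{HV} the description of the image of $\mathcal S_{G} : \mathcal H(G(F),K,V^{*}) \to \mathcal H(Z(F),Z_{0},(V^{*})^{U(k)})$, namely $\mathcal H(Z(F)^{-} \cap Z_{V^{*}}, Z_{0}, (V^{*})^{U(k)})$, where $Z(F)^{-}$ denotes the submonoid of elements that are $Z$-negative for $U$. The support restriction reflects the convergence of the Satake sum $\sum_{n \in U/(U \cap K)} \Phi(mn)(v)$ together with the fact (also from \cite{HV}) that values at $Z$-negative elements determine $\mathcal S_{G}(\Phi)$, while the intersection with $Z_{V^{*}}$ expresses that $\mathcal S_{G}(\Phi)$ must be $Z_{0}$-bi-equivariant for the character $\psi_{V^{*}}$ through which $Z_{0}$ acts on the line $(V^{*})^{U(k)}$.

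Finally I would compute the effect of $\iota_{Z}$ on the support. By definition $\iota_{Z}(\Phi)(z) = \Phi(z^{-1})^{t}$ under the canonical identification $(V^{*})^{U(k)} \simeq (V_{U(k)})^{*}$, so $\iota_{Z}$ sends a function supported on $X \subset Z(F)$ to one supported on $X^{-1}$. Inversion interchanges $Z(F)^{-}$ and $Z(F)^{+}$, and preserves $Z_{V^{*}}$ (which is a subgroup of $Z(F)$), so $\iota_{Z}$ carries $\mathcal H(Z(F)^{-} \cap Z_{V^{*}}, Z_{0}, (V^{*})^{U(k)})$ bijectively onto $\mathcal H(Z(F)^{+} \cap Z_{V^{*}}, Z_{0}, V_{U(k)})$, which is the claimed image. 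The only substantive input is the image calculation from \cite{HV}; Proposition \ref{S'} is just a bridge. The main bookkeeping point to watch is that the sign conventions for $Z$-positivity line up, and that the stabilizer appearing in \cite{HV} matches $Z_{V^{*}}$ here (indeed, stabilizing $\psi_{V^{*}}$ or its inverse under conjugation gives the same subgroup of $Z(F)$, so the $Z_{V}$ vs $Z_{V^{*}}$ ambiguity is harmless).
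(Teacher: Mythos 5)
Your proposal is correct and is exactly the paper's argument: the paper's proof consists precisely of citing the image $\mathcal H (Z(F)^{-}\cap Z_{V^{*}},Z_{0},(V^{*})^{U(k)})$ of $\mathcal S_{G}$ from \cite{HV} and invoking Proposition \ref{S'}. You have simply made explicit the bookkeeping (that $\iota_{Z}$ inverts supports, that inversion exchanges $Z(F)^{-}$ and $Z(F)^{+}$, and that $Z_{V^{*}}$ is a subgroup stable under inversion), all of which is accurate.
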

 \begin{proof} The image of $\mathcal S_{G}$ is $\mathcal H (Z(F)^{-}\cap Z_{V^{*}},Z_{0},(V^{*})^{U(k)})$ \cite{HV}. Use Proposition \ref{S'}.
 \end{proof}
 
 Analogously, the image  of $\mathcal S '_{M}$ is  $\mathcal H (Z(F)^{+_{M}}\cap Z_{V^{*}},Z_{0},V_{U(k)})$.

 \begin{definition}\label{defloc} A ring morphism $f:A\to B$ is a localisation at $b\in B$ if $f$ is injective, $b\in f(A)$ is central and invertible in $B$, and $B=f(A)[b^{-1}]$.
 \end{definition}

  There exists a Hecke operator $T_{Z}$ central in  $\mathcal H (Z(F)^{+}\cap Z_{V^{*}},Z_{0},V_{U(k)})$ of support $Z_{0}s$ such that $T_{Z}(s)=1$,  because $s$ is positive and belongs to $S(F)$ contained in $Z_{V^{*}}$. The algebra
  $\mathcal H (Z(F)^{+_{M}}\cap Z_{V^{*}},Z_{0},V_{U(k)})  $ is obtained from 
 the  algebra $\mathcal H (Z(F)^{+}\cap Z_{V^{*}},Z_{0},V_{U(k)})$  by inverting the Hecke operator $T_{Z}$ 
   because, for any  $M$-positive element $z\in Z(F) $ there exists a positive integer $n$ such that $s^{n}z$ belongs to $Z(F)^{+}$, because $s\in S(F)$ is strictly $M$-positive.
 
 \bigskip There exists a  unique Hecke operator  in  $\mathcal H (M(F),M_{0}, V_{N(k)})$ of support $M_{0}s$ with value $ \id_{V_{N(k)}}$ at $s$, because  $s$ is central in $M(F)$ and contained in $Z_{V^{*}}$. 
 
 \begin{definition}\label{TM} We denote by $T_{M}$ the Hecke operator in  $\mathcal H (M(F),M_{0}, V_{N(k)})$ with  support $M_{0}s$ and value $ \id_{V_{N(k)}}$ at $s$.
 \end{definition}
The Hecke operator  $T_{M}$ is central and invertible in $\mathcal H (M(F),M_{0}, V_{N(k)})$; it acts on $ \ind_{M_{0}}^{M(F)}V_{\overline N(k)}$ by $T_{M}([1,\overline v]_{M_{0}}) = s^{-1}[1,\overline v]_{M_{0}}$ for $v\in V$.

We also denote  by $T_{M}$ the   $G(F)$-homomorphism of $\Ind_{P(F)}^{G(F)}(\ind_{M_{0}}^{M(F)}V_{N(k)})$, such that   $T_{M}(f) (g)= T_{M}(f(g))$  for  $f\in \Ind_{P(F)}^{G(F)}(\ind_{M_{0}}^{M(F)}V_{N(k)})$ and $g\in G(F)$.

 Using Proposition \ref{I01} we see that 
 \begin{equation}\label{4}
 \mathcal S'_{M} (T_{M}) = T_{Z} \  ,
 \end{equation} 
because $(U\cap M)(F)   z\cap  M_{0}s =  ((U\cap M)(F)   zs^{-1}\cap M_{0})s = (U_{0}\cap M_{0})zs^{-1} $ if $zs^{-1}\in Z_{0}$ and is $0$ otherwise.
The Hecke operator $T_{M}$ belongs to the image of $\mathcal S '$, because  $T_{Z}$ belongs to the image of $\mathcal S '_{G}$ by construction, $\mathcal S '$ is injective and we have  (\ref{4}) , (\ref{trans}).
 We have shown: 

 \begin{proposition}\label{local}
 The map $\mathcal S'$ is a localisation at $T_{M}$.
 \end{proposition}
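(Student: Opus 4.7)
My plan is to verify in order the four conditions of Definition \ref{defloc}. The first three --- injectivity of $\mathcal S'$, centrality and invertibility of $T_M$ in the target algebra, and membership $T_M \in \mathcal S'(\mathcal H(G(F),K,V))$ --- have all been assembled in the discussion immediately preceding the proposition, so I would simply collect them: injectivity of $\mathcal S'$ comes from Proposition \ref{S'} combined with injectivity of the Satake morphism $\mathcal S$ proved in \cite{HV}; the centrality and invertibility of $T_M$ follow from $s$ being central in $M(F)$ and lying in $Z_{V^*}$; and the membership $T_M \in \mathrm{im}\,\mathcal S'$ follows from (\ref{4}), the factorisation $\mathcal S'_G = \mathcal S'_M \circ \mathcal S'$ of (\ref{trans}), injectivity of $\mathcal S'_M$, and the fact that $T_Z$ lies in the image of $\mathcal S'_G$ by Proposition \ref{im}.

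The only new content is therefore the identity
\[
\mathcal H(M(F),M_0,V_{N(k)}) \;=\; \mathcal S'(\mathcal H(G(F),K,V))[T_M^{-1}].
\]
To prove it I would transport the question, via the injective algebra map $\mathcal S'_M$, into the commutative algebra $\mathcal H(Z(F),Z_0,V_{U(k)})$. Applying the analogue of Proposition \ref{im} with $M$ in the role of $G$, the map $\mathcal S'_M$ identifies $\mathcal H(M(F),M_0,V_{N(k)})$ with $\mathcal H(Z(F)^{+_M}\cap Z_{V^*},Z_0,V_{U(k)})$. Under this identification, the subring $\mathcal S'(\mathcal H(G(F),K,V))$ corresponds via (\ref{trans}) and Proposition \ref{im} to $\mathcal H(Z(F)^+\cap Z_{V^*},Z_0,V_{U(k)})$, while the element $T_M$ corresponds to $T_Z$ by (\ref{4}).

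It then suffices to check the analogous equality
\[
\mathcal H(Z(F)^{+_M}\cap Z_{V^*},Z_0,V_{U(k)}) \;=\; \mathcal H(Z(F)^+\cap Z_{V^*},Z_0,V_{U(k)})[T_Z^{-1}],
\]
which is precisely the fact stated in the paragraph just before Definition \ref{TM}: for any $M$-positive $z \in Z(F)\cap Z_{V^*}$ one has $s^n z \in Z(F)^+$ for all sufficiently large $n$, since $s \in S(F) \subset Z_{V^*}$ is strictly $M$-positive; multiplying the corresponding Hecke operator by a suitable power of $T_Z^{-1}$ exhibits it in the localised ring on the right hand side.

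The main delicacy I anticipate is purely formal bookkeeping: because $\mathcal S'_M$ arises from $\mathcal S_M$ through the anti-isomorphisms $\iota$ and $\iota_M$ of Proposition \ref{S'}, one must verify that centrality and the localisation structure transport faithfully across these anti-isomorphisms, so that inverting $T_M$ on the $\mathcal S'$-side really does match inverting $T_Z$ on the $\mathcal S'_G$-side. Once that check is made, the conclusion is immediate from the description of the images of $\mathcal S_G$ and $\mathcal S_M$ recalled from \cite{HV}.
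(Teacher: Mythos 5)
Your proposal is correct and follows essentially the same route as the paper: the authors likewise assemble injectivity, centrality/invertibility and membership of $T_M$ from Proposition \ref{S'}, Proposition \ref{im}, (\ref{trans}) and (\ref{4}), and obtain the equality $\mathcal H(M(F),M_0,V_{N(k)})=\mathcal S'(\mathcal H(G(F),K,V))[T_M^{-1}]$ by transporting it through the injective map $\mathcal S'_M$ to the statement that $\mathcal H(Z(F)^{+_M}\cap Z_{V^*},Z_0,V_{U(k)})$ is obtained from $\mathcal H(Z(F)^{+}\cap Z_{V^*},Z_0,V_{U(k)})$ by inverting $T_Z$, using that $s^nz\in Z(F)^+$ for $n$ large. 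No gaps.
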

 
 In (\ref{I0V}), we consider the map $I_{0}$ as a $C[T]$-linear map, $T $ acting on the left side by $(\mathcal S ')^{-1}(T_{M})$ and on the right side by $T_{M}$.
By Proposition \ref{local},  the localisation of $I_{0}$ at $T$
 is the $G(F) $ and $\mathcal H (M(F),M_{0},V_{N(k)})$-equivariant map 
   \begin{equation} \label{theta}
\Theta \ :  \ \mathcal H (M(F),M_{0},V_{N(k)})\otimes_{\mathcal H (G(F),K,V), \mathcal S '}  \ind_{K}^{G(F)}V \ \to \  \ \Ind_{P(F)}^{G(F)}\, (\ind_{ M_{0}}^{M(F)}V_{N (k)}) \ .
   \end{equation}
 We will prove that  the localisation of $I_{0}$ at $T$ is an isomorphism when $V$ is $M$-coregular. With Proposition \ref{local} this implies  our main theorem :
   
   \begin{theorem}\label{main} $\Theta$ is  injective, and  $\Theta$ is surjective if and only if 
 $V$ is $M$-coregular.
   \end{theorem}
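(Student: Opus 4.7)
The plan is to follow the outline already sketched in the introduction: factor $I_{0}$ through an intermediate representation and study each factor separately. Concretely, I would write $I_{0} = \zeta \circ \xi$ where $\xi : \ind_{K}^{G(F)}V \to \ind_{K}^{G(F)}\ind_{P(k)}^{G(k)}V$ is induced by the $K$-equivariant inclusion $V \hookrightarrow \ind_{P(k)}^{G(k)}V$ (send $v$ to $[1,\overline v]_{P(k)}$), and $\zeta : \ind_{K}^{G(F)}\ind_{P(k)}^{G(k)}V \to \Ind_{P(F)}^{G(F)}(\ind_{M_{0}}^{M(F)}V_{N(k)})$ is the morphism associated via (\ref{calI}) to the quotient $\ind_{P(k)}^{G(k)}V \twoheadrightarrow V_{N(k)}$. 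A direct check using Remark \ref{I0R}(i) shows $\zeta \circ \xi = I_{0}$. After identifying $\ind_{K}^{G(F)}\ind_{P(k)}^{G(k)}V = \ind_{\mathcal P}^{G(F)}V_{N(k)}$ with $\mathcal P$ the inverse image of $P(k)$ in $K$, I introduce the Hecke operators $T_{\mathcal P}$, $T_{G}$, $T_{K,\mathcal P}$ from the introduction and view both $\ind_{\mathcal P}^{G(F)}V_{N(k)}$ and the parabolic induction as $C[T]$-modules via $T_{\mathcal P}$ and $T_{M}$ respectively.

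For $\zeta$, the intertwining identity $\zeta \circ T_{\mathcal P} = T_{M} \circ \zeta$ is a direct computation from the definitions, with no hypothesis on $V$. I then prove the localization of $\zeta$ at $T$ is an isomorphism: surjectivity follows because any function in $\Ind_{P(F)}^{G(F)}(\ind_{M_{0}}^{M(F)}V_{N(k)})$ has compact support modulo $P(F)$ and, after multiplying by a sufficient power of $T_{M}$, concentrates on cosets reachable from $K \cdot s^{n}$ via the Iwasawa decomposition $G(F) = P(F)K$ and the strict $M$-positivity of $s$; injectivity uses the same decomposition to reconstruct $f \in \ind_{\mathcal P}^{G(F)}V_{N(k)}$ from $\zeta(f)$ once one multiplies by enough powers of $T_{\mathcal P}$. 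This is (essentially) Corollary \ref{Tzeta}.

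For $\xi$, I verify the unconditional identity $T_{K,\mathcal P} \circ \xi = T_{G}$ by unpacking the definitions of $T_{K,\mathcal P}$ and $T_{G}$ in terms of support and values. Under the $M$-coregularity hypothesis I then prove $\xi \circ T_{K,\mathcal P} = T_{\mathcal P}$ and $\mathcal S'(T_{G}) = T_{M}$: the key input is Corollary \ref{basic}, which says that the projection $V^{\overline N(k)} \to V_{N(k)}$ is an isomorphism and that, for $h \in K$, the image of $hV^{\overline N(k)}$ in $V_{N(k)}$ vanishes whenever $h \notin P(k)\overline{P}(k)$; this vanishing kills all Bruhat cells outside $P_{0}\overline{P}_{0}$ in the double-coset computation of these two Hecke operators. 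Combined with the previous identity, $T_{G}$ becomes a two-sided inverse of $T_{K,\mathcal P}$ up to $T$, so $\xi$ is $C[T]$-linear (for $T = (\mathcal S')^{-1}(T_{M})$ on the left) and its localization at $T$ is injective in general and bijective under $M$-coregularity. For the converse, if $V$ is not $M$-coregular, Corollary \ref{basic}(ii) produces $h \in K$ with $h \notin P(k)\overline{P}_{V}(k)\overline{P}(k)$ such that $hV^{\overline N(k)}$ has nonzero image in $V_{N(k)}$; transporting this through $\zeta$ (which is a localized isomorphism) yields an element of $\Ind_{P(F)}^{G(F)}(\ind_{M_{0}}^{M(F)}V_{N(k)})$ which cannot lie in $\xi\bigl(\ind_{K}^{G(F)}V\bigr)$ after any localization, obstructing the surjectivity of $\Theta$.

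The main obstacle will be the second identity $\xi \circ T_{K,\mathcal P} = T_{\mathcal P}$ and the equality $\mathcal S'(T_{G}) = T_{M}$: both reduce to a careful analysis of the double coset $Ks\mathcal P$ and $KsK$ respectively, where one must explicitly compute using the decomposition $V = V^{\overline N(k)} \oplus (1-\overline N(k))V$ from Proposition \ref{deck}, the behaviour of $s$-conjugation on $N_{0}$ and $\overline N_{0}$, and the $M$-coregularity to eliminate unwanted contributions from cells outside $P_{0}\overline{P}_{0}$. The ``only if'' half of surjectivity is the other delicate point, since one needs to translate the failure of coregularity (a finite-group statement) into a concrete non-hit vector in the parabolically induced representation.
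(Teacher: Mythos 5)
Your overall architecture is exactly the paper's: the factorisation $I_{0}=\zeta\circ\xi$ through $\ind_{\mathcal P}^{G(F)}V_{N(k)}$, the operators $T_{G}$, $T_{\mathcal P}$, $T_{K,\mathcal P}$, the unconditional identities $T_{K,\mathcal P}\circ\xi=T_{G}$ and $\zeta\circ T_{\mathcal P}=T_{M}\circ\zeta$, the coregularity-dependent identities $\xi\circ T_{K,\mathcal P}=T_{\mathcal P}$ and $\mathcal S'(T_{G})=T_{M}$, and the localisation of each factor at $T$. Your sketch of the isomorphism for $\zeta_{T}$ (generation of the parabolic induction by translates of $T_{M}^{-n}\zeta([1,\overline v]_{\mathcal P})$ via the compactness of $P(F)\backslash G(F)$, and identification of $\ker\zeta$ with the $T_{\mathcal P}^{\infty}$-torsion by pushing supports into $\mathcal P Z(F)^{+M}K$) matches Lemmas \ref{binj}--\ref{gen} and Proposition \ref{mainp}, and the forward direction for $\xi$ matches Proposition \ref{xi}.

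The genuine gap is in the ``only if'' half. First, you misquote Corollary \ref{basic}(ii): for $h\notin P(k)\overline P_{V}(k)\overline P(k)$ the image of $hV^{\overline N(k)}$ in $V_{N(k)}$ is \emph{zero}; what non-coregularity gives you is an $h$ lying in $P(k)\overline P_{V}(k)\overline P(k)$ but \emph{not} in $P(k)\overline N(k)$ with nonzero image. Second, and more seriously, ``transporting this through $\zeta$ yields an element which cannot lie in the image after any localization'' is not an argument: since $\zeta_{T}$ is an isomorphism, surjectivity of $\Theta$ is equivalent to surjectivity of $\xi_{T}$, and to refute the latter you must show that $T_{\mathcal P}^{n}([1,x]_{\mathcal P})$ has no $\xi$-preimage for any $n$. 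The missing idea (Proposition \ref{Sxi} of the paper) is a rigidity statement: if $T_{\mathcal P}([1,x]_{\mathcal P})=\xi(f)$, then the support condition forces $f=\sum_{i}(s\overline n_{i})^{-1}[1,v_{i}]_{K}$ with, crucially, each $v_{i}=\varphi(x)\in V^{\overline N(k)}$ --- this step uses that a nonzero $\overline N(k)$-submodule of $V$ has nonzero $\overline N(k)$-invariants, so no correction term $w_{i}$ can survive. Only then does the vanishing of $\xi(f)$ outside $\mathcal Ps\mathcal P$ translate into $\overline{k\varphi(x)}=0$ for all $k\notin\mathcal P\overline N_{0}$ and all $x$, i.e. $P(k)\overline P_{V}(k)\overline P(k)\subset P(k)\overline N(k)$, which is $M$-coregularity. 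Without this uniqueness-of-preimage analysis your contrapositive does not close.
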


\subsection{Decomposition of the intertwiner} To go further, following Herzig, we write 
the intertwiner $I_{0}$ as a composite of two $G(F)$-equivariant linear maps 

\begin{equation}\label{diagra} \xymatrix{  & \ind_{\mathcal P}^{G(F)}V_{N (k)} \ar[dr]^{\zeta} \\ \ind_{K}^{G(F)}V \ar[rr]_{I_{0} \ \ \ \ \ \ } \ar[ru]^{{\xi}} && \Ind_{P(F)}^{G(F)}\, (\ind_{ M_{0}}^{M(F)}V_{N (k)} )  }
\end{equation}
which we now define. In this diagram, $ \mathcal P$ is the inverse image in $K$ of $P(k)$; it is a parahoric subgroup of $G(F)$ with an Iwahori decomposition with respect to $M$,
\begin{equation}\label{Iwa}
 \mathcal P = N_{0}M_{0}\overline N_{0,+} \ \ , \ \ \overline N_{0,+}:= \overline N(F) \cap K_{+} \ . 
\end{equation}
The transitivity of the compact induction implies that 
\begin{equation} \ind_{\mathcal P}^{G(F)}V_{N (k)} = \ind_{K}^{G(F)}(\ind_{ P(k)}^{G(k)}V_{N(k)}) \ . 
\end{equation}

\begin{definition} \label{defxizeta} The map $\xi $ is the image  by the compact induction functor $\ind_{K}^{G}$ of the natural  embedding $V\to \ind_{ P(k)}^{G(k)}V_{N(k)}$.
For $v\in V$,   $\xi ([1,v]_{K})$ is
the function in $\ind_{\mathcal P}^{G(F)}V_{N (k)}$ of support contained in $K$ and value $[1,\overline{kv}]_{\mathcal P}$ at $k\in K$. 

The map $\zeta$ sends $[1,\overline v]_{\mathcal P}$, for $v\in V $,  to 
the function in $\Ind_{P(F)}^{G(F)}\, (\ind_{ M_{0}}^{M(F)}V_{N (k)})$ of support contained in $P(F)\mathcal P =P(F)\overline N_{0,+}$ and is the constant function 
with value $[1,\overline v]_{M_{0}}$ on $\overline N_{0,+}$.

\end{definition}

\begin{remark}\label{explicit} {\rm Later we will use that, 
 for $g\in G(F)$, $\zeta(g^{-1}[1,\overline v]_{\mathcal P})$ has support in $P(F)\mathcal P g$ which contains $1$ if and only if $g\in \mathcal P P(F) $. Consequently, for $f\in \ind_{\mathcal P}^{G(F)}V_{N (k)}$, the element $\zeta(f)(1)$ depends only on the restriction of $f$ to $\mathcal P P(F) $. 
}
\end{remark}

\begin{lemma} \label{zetaxi} $I_{0}= \zeta \circ \xi$.

\end{lemma}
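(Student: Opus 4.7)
The plan is to invoke the uniqueness statement in Remark \ref{I0R}(i): an intertwiner $I$ in $\Hom_{G(F)}(\ind_K^{G(F)} V, \Ind_{P(F)}^{G(F)}(\ind_{M_0}^{M(F)} V_{N(k)}))$ is determined by the values $(I[1,v]_K)(1)$ as $v$ runs over $V$, thanks to the Iwasawa decomposition $G(F) = P(F)K$. Both $I_0$ and $\zeta \circ \xi$ are $G(F)$-equivariant by construction, so the identity $I_0 = \zeta \circ \xi$ reduces to checking
$$((\zeta \circ \xi)([1,v]_K))(1) = [1, \overline{v}]_{M_0}$$
for every $v \in V$; the right-hand side is the value of $I_0$ on the generator $[1,v]_K$ evaluated at $1 \in G(F)$, as recorded in Remark \ref{I0R}(i).

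The first step is to make $f_v := \xi([1,v]_K) \in \ind_{\mathcal{P}}^{G(F)} V_{N(k)}$ explicit. Via the transitivity isomorphism $\ind_{\mathcal{P}}^{G(F)} V_{N(k)} = \ind_K^{G(F)}(\ind_{P(k)}^{G(k)} V_{N(k)})$ and the description of the natural $G(k)$-embedding $V \hookrightarrow \ind_{P(k)}^{G(k)} V_{N(k)}$ (adjoint under Frobenius reciprocity to the quotient map $V \to V_{N(k)}$), one finds that $f_v$ is supported in $K$ with $f_v(k) = \overline{kv} \in V_{N(k)}$ for $k \in K$; the required $\mathcal{P}$-equivariance uses only that $V$ is trivial on $K_+$ and that $N_0$ acts trivially on $V_{N(k)}$.

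The second step is to apply $\zeta$ and exploit Remark \ref{explicit}: the value $\zeta(f_v)(1)$ depends only on $f_v|_{\mathcal{P} P(F)}$. Since $f_v$ is supported in $K$, and since the Iwahori decomposition (\ref{Iwa}) yields $P_0 \subset \mathcal{P}$ and hence $\mathcal{P} P(F) \cap K = \mathcal{P} P_0 = \mathcal{P}$, only the values of $f_v$ on $\mathcal{P}$ itself matter. A direct check using $\mathcal{P} = N_0 M_0 \overline{N}_{0,+}$ then shows $f_v|_{\mathcal{P}} = [1, \overline{v}]_{\mathcal{P}}|_{\mathcal{P}}$: for $p = n m u$ with $n \in N_0$, $m \in M_0$, $u \in \overline{N}_{0,+}$, both sides are equal to $\overline{m v}$ in $V_{N(k)}$. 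Therefore
$$\zeta(f_v)(1) = \zeta([1, \overline{v}]_{\mathcal{P}})(1) = [1, \overline{v}]_{M_0},$$
the last equality being the definition of $\zeta$ read off at $1 \in \overline{N}_{0,+}$ (Definition \ref{defxizeta}).

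There is no genuine obstacle here: the argument is bookkeeping driven by Remark \ref{I0R}(i) and Remark \ref{explicit}. The only point demanding some care is the combined identification $\mathcal{P} P(F) \cap K = \mathcal{P}$ together with the matching of $f_v$ with $[1, \overline{v}]_{\mathcal{P}}$ on $\mathcal{P}$; once these are in hand, the lemma follows by comparing values at the generators $[1,v]_K$.
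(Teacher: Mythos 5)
Your argument is correct and is exactly the verification the paper leaves implicit: the authors' proof of Lemma \ref{zetaxi} is simply ``clear on the definitions,'' and what you have written out — reducing to the values $((\zeta\circ\xi)[1,v]_K)(1)$ via Remark \ref{I0R}(i), computing $\xi([1,v]_K)$ explicitly, and using $\mathcal{P}P(F)\cap K=\mathcal{P}$ together with Definition \ref{defxizeta} to get $[1,\overline v]_{M_0}$ — is that same check made explicit. No gaps; this is the paper's approach with the bookkeeping filled in.
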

\begin{proof} This is clear on the definitions of $I_{0},\xi, \zeta$. 
\end{proof}

\begin{lemma} \label{inj} The map $\xi $ is  injective.
\end{lemma}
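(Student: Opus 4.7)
The approach is to factor $\xi$ through the compact induction functor $\ind_K^{G(F)}$ and reduce to an injectivity statement of finite-group representations, which will then follow from irreducibility of $V$ together with the nonvanishing of $V_{N(k)}$.

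First I would use that by Definition \ref{defxizeta} we have $\xi = \ind_K^{G(F)}(\iota)$, where $\iota : V \to \ind_{P(k)}^{G(k)} V_{N(k)}$ is the natural $G(k)$-equivariant map $v \mapsto (g \mapsto \overline{gv})$ obtained from the quotient $V \to V_{N(k)}$ by Frobenius reciprocity (and which is seen in the formula for $\xi$ via the transitivity isomorphism $\ind_K^{G(F)} \ind_{P(k)}^{G(k)} V_{N(k)} = \ind_{\mathcal P}^{G(F)} V_{N(k)}$). Since $K$ is open in $G(F)$, compact induction from $K$ is exact on smooth $C$-representations, so it suffices to prove that $\iota$ is injective.

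Next, since $V$ is irreducible as a $G(k)$-module, the kernel of the $G(k)$-equivariant map $\iota$ is either all of $V$ or zero, i.e., $\iota$ is either zero or injective. Evaluating at $1 \in G(k)$ yields $\iota(v)(1) = \overline v$, the image of $v$ in $V_{N(k)}$. Hence $\iota = 0$ if and only if the quotient map $V \to V_{N(k)}$ vanishes, i.e., $V_{N(k)} = 0$. But Corollary \ref{basic}(i) provides an isomorphism $V^{\overline N(k)} \xrightarrow{\sim} V_{N(k)}$ under that quotient, and $V^{\overline N(k)}$ contains the one-dimensional line $V^{\overline U(k)}$ (the opposite-Borel analogue of the highest-weight line discussed before Proposition \ref{1}, on which $T(k)$ acts by $w_0(\psi_V)$ according to Lemma \ref{overU}). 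So $V_{N(k)} \neq 0$, $\iota$ is nonzero, hence injective, and therefore so is $\xi$.

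The argument is entirely formal; the only substantive inputs are Corollary \ref{basic} and the exactness of compact induction from an open subgroup. There is no real obstacle to overcome.
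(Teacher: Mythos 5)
Your proof is correct and follows exactly the paper's argument: the paper also observes that $V\to \ind_{P(k)}^{G(k)}V_{N(k)}$ is injective because $V$ is irreducible and $V_{N(k)}\neq 0$, and then concludes by exactness of $\ind_{K}^{G(F)}$. You merely spell out the (true and easy) reason why $V_{N(k)}\neq 0$, which the paper leaves implicit.
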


\begin{proof}  As $V$ is irreducible and $V_{N(k)}\neq 0 $, the map $V\to \ind_{ P(k)}^{G(k)}V_{N(k)}$ is injective. As the functor $\ind_{K}^{G}$ is exact,  the map $\xi $ is injective.
\end{proof}

As $P\neq G$, we have 
$$ \ind_{K}^{G(F)}V \not\simeq \ind_{\mathcal P}^{G(F)}V_{N(k)} \  , $$
hence $\xi$ is not  surjective.

\section{Hecke operators }
In this chapter we introduce Hecke operators associated to our fixed element $s\in S(F)$ central in $M(F)$ and strictly $M$-positive, and we show the compatibility of these Hecke operators   with the maps $\xi, \zeta, \mathcal S'$  (sometimes we need to suppose that $V$ is $M$-coregular).

\bigskip The space of $G(F)$-equivariant homomorphisms from $\ind_{K}^{G(F)}V$ to  $\ind_{\mathcal P}^{G(F)}V_{N(k)}$,  is isomorphic to the space ${\cal H}(G(F), \mathcal P,K, V,V_{N(k)})$ of functions $\Phi:G(F)\to \Hom_{C}(V,V_{N(k)})$ satisfying 

(i) $\Phi(jgj')=j \circ \Phi \circ j'$ for $j \in \mathcal P,j'\in K$,

(ii) $\Phi$ vanishes outside finitely many double cosets $\mathcal P gK$.

We call $\Phi$ an Hecke operator. We shall usually use the same notation for  the Hecke operator and for  the corresponding $G(F)$-equivariant homomorphism, defined by: for all $v\in V$,  
\begin{equation}\label{7}
[1,v]_{K} \to \sum _{g\in \mathcal P \backslash G(F)} g^{-1}[1, \Phi(g)(v)]_{\mathcal P} \ . 
\end{equation}
 The map $\xi$  corresponds to the Hecke operator of support $K$ and value at $1$ the projection $v\mapsto \overline v : V\to V_{N(k)}$. 

In the same way, the space of $G(F)$-equivariant homomorphisms  $\ind_{\mathcal P}^{G(F)}V_{N(k)}\to \ind_{K}^{G(F)}V$,  corresponds to a space ${\cal H}(G(F),K, \mathcal P   V_{N(k)}, V)$ of functions $G(F)\to \Hom_{C}( V_{N(k)},V)$.

\subsection{Definition of Hecke operators}

\begin{definition} 
We denote by $T_{G}$ the   Hecke operator in $\mathcal H (G(F),K,V)$ with support $KsK$ such that $T_{G}(s) \in \End_{C}(V)$ is the  natural projector of  image  $V^{\overline N(k)}$, factorizing by the quotient map $V\to V_{N(k)}$ (Proposition \ref{deck}).
\end{definition}
This  Hecke operator exists  (\cite{HV} 7.3 Lemma 1), because    $s \in S(F)$  is positive  and belongs to $Z_{V*}$.   The Hecke operator $T_{M}$ could have been defined in the same way as $T_{G}$. We shall prove later that $\mathcal S'(T_{G})  = T_{M}$ when $V$ is $M$-coregular.

 We define now Hecke operators $T_{\mathcal P}$ in $\mathcal H (G(F),\mathcal P,V_{N(k)})$ and $T_{K,\mathcal P }$ in $\mathcal H (G(F), K,\mathcal P, V_{N(k)},V)$ generalizing the Hecke operators $T_{G}$ and $T_{M}$. 
 
\begin{proposition} (i) \ There exists a unique Hecke operator  $T_{\mathcal P}$ in $\mathcal H (G(F),\mathcal P,V_{N(k)})$ with support $\mathcal P s\mathcal P$   and value at $s$ the identity of $V_{N(k)}$.

(ii) \   There exists a unique Hecke operator $T_{K,\mathcal P}$ in $\mathcal H (G(F), K,\mathcal P,V_{N(k)},V)$  with support $K s\mathcal P  $   such that $T_{K,\mathcal P}(s):V_{N(k)} \to V$ is given by the  isomorphism $\varphi: V_{N(k)} \to V^{\overline N(k)}$ deduced from Proposition \ref{deck}.

\end{proposition}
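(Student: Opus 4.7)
Uniqueness in both parts is immediate: a bi-equivariant function with support in a single double coset is determined by its value at one point, so prescribing the support and the value at $s$ together with bi-equivariance fixes the operator. The content of the proposition is therefore existence, i.e.\ that the chosen value at $s$ is compatible with the bi-equivariance relations.

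For (i), the condition to verify is: for every $j \in \mathcal P \cap s^{-1}\mathcal P s$, the elements $j$ and $sjs^{-1}$ act by the same operator on $V_{N(k)}$. Using the Iwahori decomposition (\ref{Iwa}), I would write $j = n m \overline n$ with $n \in N_0$, $m \in M_0$, $\overline n \in \overline N_{0,+}$, so that $sjs^{-1} = (sns^{-1})\, m\, (s \overline n s^{-1})$. The action of $\mathcal P$ on $V_{N(k)}$ factors through the quotient $\mathcal P \to M(k)$, whose kernel is $N_0\, M_{0,+}\, \overline N_{0,+}$. The strict $M$-positivity of $s$ gives $s N_0 s^{-1} \subset N_{0,+}$: for any $a \in \Phi(S,N)$ one has $a(v(s)) > 0$, so conjugation by $s$ sends $U_{a,0}$ into $U_{a,r}$ with $r>0$, hence into $K_+$. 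On the other hand, the hypothesis $sjs^{-1}\in\mathcal P$ forces $s\overline n s^{-1} \in \overline N_{0,+}$. Thus $j$ and $sjs^{-1}$ have the same $M_0$-component $m$, and their remaining components lie in the kernel of $\mathcal P \to M(k)$; they project to the same element $\overline m \in M(k)$ and act identically on $V_{N(k)}$.

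For (ii), the analogous compatibility is: for $j \in \mathcal P$ such that $k := sjs^{-1}$ lies in $K$, one must have $k\circ \varphi = \varphi \circ j$ as maps $V_{N(k)}\to V$. With $j = nm\overline n$ as above, $sns^{-1}\in N_{0,+}\subset K$ and $m\in M_0 \subset K$, so $s\overline n s^{-1} = m^{-1}(sns^{-1})^{-1} k \in K \cap \overline N(F) = \overline N_0$. Consequently the image of $k$ in $G(k)$ is $\overline m \cdot \overline{s\overline n s^{-1}}$ with $\overline m \in M(k)$ and $\overline{s\overline n s^{-1}} \in \overline N(k)$. Since $\varphi(v) \in V^{\overline N(k)}$, the $\overline N(k)$-factor acts trivially on $\varphi(v)$; and the $M(k)$-equivariance of $\varphi$, which comes from the $M(k)$-equivariant decomposition of Proposition \ref{deck}, yields $\overline m\, \varphi(v) = \varphi(\overline m v) = \varphi(jv)$, as required.

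The technical heart of both arguments is the inclusion $s N_0 s^{-1} \subset N_{0,+}$, a direct consequence of the strict $M$-positivity of $s$ and the filtration of root subgroups; given this, both verifications reduce to unwinding the Iwahori decomposition of $\mathcal P$ and invoking the $M(k)$-equivariance of the natural isomorphism $V^{\overline N(k)} \cong V_{N(k)}$.
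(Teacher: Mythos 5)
Your proposal is correct and follows essentially the same route as the paper: reduce to a bi-equivariance compatibility on the single double coset, apply the Iwahori decomposition $\mathcal P = N_{0}M_{0}\overline N_{0,+}$ together with $sN_{0}s^{-1}\subset N_{0,+}$ (from strict $M$-positivity and centrality of $s$ in $M(F)$), and observe that the two conjugate elements share the same $M_{0}$-component, with the residual $\overline N(k)$-factor in (ii) absorbed because $\varphi$ lands in $V^{\overline N(k)}$ and is $M(k)$-equivariant. No gaps.
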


\begin{proof}  
(i) By the condition (i) for Hecke operators, it suffices to check that  for $h,h'\in \mathcal P$, the relation $hs=sh'$ implies that the actions of $h$ and of $h'$ on $V_{N(k)}$ are the same. By the Iwahori decomposition (\ref{Iwa}) of $\mathcal P$, we have 
\begin{equation} s \mathcal Ps^{-1}= sN_{0}M_{0}\overline N_{0+}s^{-1} = sN_{0}s^{-1}M_{0}sN_{0+}s^{-1}
\end{equation}
as $s$ is  central in $M(F)$, and $h$ and $h'$ have the same component in $M_{0}$.

(ii) It suffices to check that  for $h\in  K, h' \in \mathcal P $, the relation $hs=sh'$ implies that  $h' (\varphi (\overline v))= \varphi (h(\overline v))$ for all $v\in V$.
As $s$ is central in $M(F)$ and strictly $M$-positive we have
\begin{equation} s \mathcal Ps^{-1} \subset   N_{0+} M_{0}s\overline N_{0+}s^{-1} \ \ {\rm and}  \ \  K \cap s \mathcal Ps^{-1} \subset   N_{0+} M_{0}\overline N_{0} . 
\end{equation}
The elements $h \in  N_{0+} M_{0}\overline N_{0} $ and $h'$ have the same component in $M_{0}$.
 \end{proof}

\subsection{Compatibilities between Hecke operators  } 

In this section, we prove the following result:

\begin{proposition}\label{xi} i. \ The left diagram  
 \begin{equation}\xymatrix{  \ind_{K}^{G(F)}V 
\   \ar[r]^{\xi \ \ } \ar[d]_{T_{ G}}& \   \ind_{\mathcal P}^{G(F)}V _{N(k)}  \ar[dl]_{T_{K,{\mathcal P}}}		\\ 
 \ind_{K}^{G(F)}V  \ & } \ \ \ \ \
 \xymatrix{ &    \ind_{\mathcal P}^{G(F)}V _{N(k)}  \ar[dl]_{T_{K,{\mathcal P}}}	 \ar[d]^{T_{ \mathcal P}}	\\ 
 \ind_{K}^{G(F)}V  \  \ar[r]_{\xi \ \ } & \  \ind_{\mathcal P}^{G(F)}V _{N(k)} } 
 \end{equation}
is commutative; the right  diagram is commutative when $V$ is $M$-coregular.

ii. \ The diagram  
 $$\xymatrix{ \ind_{\mathcal P}^{G(F)}V _{N(k)}
\   \ar[r]^{\zeta \ \ \ \ \ \ } \ar[d]_{T_{ \mathcal P}}& \   \Ind_{P(F)}^{G(F)}\, (\ind_{ M_{0}}^{M(F)}V_{N (k)}) 	 \ar[d]^{T_{M}}	\\ 
\ind_{\mathcal P}^{G(F)}V _{N(k)}  \  \ar[r]_{\zeta \ \ \ \ \  \ } & \   \Ind_{P(F)}^{G(F)}\, (\ind_{ M_{0}}^{M(F)}V_{N (k)})}$$
 is commutative.
 
 iii. \  $\mathcal S' (T_{G})= T_{M}$  when $V$ is $M$-coregular.
\end{proposition}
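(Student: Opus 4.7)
All four identities are verified by expanding each side in the Hecke-operator formalism of Section~2, restricting attention to the relevant support, and checking agreement of values at a single double-coset representative. The unifying structural input is the Iwahori decomposition $\mathcal{P}=N_0M_0\overline N_{0,+}$ combined with strict $M$-positivity of $s$, which gives $sN_0s^{-1}\subsetneq N_0$ and $s\overline N_{0,+}s^{-1}\supsetneq\overline N_{0,+}$; these strict inclusions collapse the coset sums below.

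For part (i) left, both $T_{K,\mathcal{P}}\circ\xi$ and $T_G$ lie in $\mathcal{H}(G(F),K,V)$ with support in $Ks\mathcal{P}\cdot K=KsK$, so we compare their values at $s$. Expanding $(T_{K,\mathcal{P}}\circ\xi)(s)$ via the convolution formula gives a sum over $k\in K/\mathcal{P}$ with $sk\in Ks\mathcal{P}$; an Iwahori-decomposition argument using strict $M$-positivity of $s$ forces $k\in\mathcal{P}$, so only the identity coset contributes. The resulting value is $T_{K,\mathcal{P}}(s)\circ\xi(1)$, i.e.\ the composition $V\to V_{N(k)}\xrightarrow{\varphi}V^{\overline N(k)}\hookrightarrow V$, which by Proposition~\ref{deck} and Corollary~\ref{basic}(i) equals $p_0=T_G(s)$. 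For part (ii), both sides are $G(F)$-equivariant, so it suffices to evaluate at the generator $[1,\overline v]_{\mathcal{P}}$ at the point $1$ (Remark~\ref{explicit}). Using $\overline N\cap P=\{1\}$ one checks $s\mathcal{P}\cap P(F)=sN_0M_0$, and a direct computation with the Iwahori decomposition gives $\zeta(T_{\mathcal{P}}[1,\overline v]_{\mathcal{P}})(1)=s\cdot[1,\overline v]_{M_0}$; on the other side $T_M(\zeta([1,\overline v]_{\mathcal{P}}))(1)=T_M([1,\overline v]_{M_0})=s\cdot[1,\overline v]_{M_0}$ directly from the definitions.

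For part (iii), Proposition~\ref{I01} gives
\[\mathcal{S}'(T_G)(m)(\overline v)=\sum_{n\in N_0\backslash N(F)}\overline{T_G(nm)(v)},\]
where only those $n$ with $nm\in KsK$ contribute. Writing such $nm=k_1sk_2$ with $k_1,k_2\in K$, we get $T_G(nm)(v)=k_1p_0(k_2v)\in k_1V^{\overline N(k)}$, whose image in $V_{N(k)}$ by Corollary~\ref{basic}(ii) vanishes unless the reduction $\bar k_1\in G(k)$ lies in $P(k)\overline P_V(k)\overline P(k)$; under $M$-coregularity $\overline P_V\subset\overline P$, so this becomes the open Bruhat cell $P(k)\overline P(k)$. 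A Bruhat-cell analysis---combining the Cartan decomposition of $G(F)$ (to control which $nm$ lie in $KsK$) with the Bruhat decomposition of $G(k)$ (to locate $\bar k_1$)---then shows that open-cell contributions occur precisely when $m\in M_0s$ with $n$ trivial mod $N_0$, giving $m_0\overline v=T_M(m_0s)(\overline v)$ for $m=m_0s$ and vanishing for $m\notin M_0s$. Part (i) right follows by the parallel argument: the ``extra'' support of $\xi\circ T_{K,\mathcal{P}}$ beyond $\mathcal{P}s\mathcal{P}$ corresponds to reductions $\bar k_1$ outside the open Bruhat cell, and coregularity annihilates these terms.

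The principal difficulty lies in the coregularity-dependent Bruhat-cell analysis in parts (iii) and (i) right: one must pin down precisely which triples $(n,k_1,k_2)$ with $nm\in KsK$ yield $\bar k_1$ in the open Bruhat cell of $G(k)$, and verify that only the trivial contributions survive. This requires interweaving the Cartan decomposition of $G(F)$ with the Bruhat decomposition of $G(k)$ and the description of $\overline P_V$ from Proposition~\ref{1}.
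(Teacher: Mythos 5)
Your treatment of (i)-left and of (ii) is sound and essentially the paper's: for (i)-left the collapse of the convolution sum to the single coset $\mathcal P$ uses $K\cap s^{-1}Ks\subset\mathcal P$ and gives $T_{K,\mathcal P}(s)\circ\xi(1)=\varphi\circ(v\mapsto\overline v)=T_G(s)$; for (ii) both sides are the function supported on $P(F)\overline N_{0,+}$ with constant value $T_M([1,\overline v]_{M_0})$ on $\overline N_{0,+}$. (Note, though, that with the paper's conventions $T_M([1,\overline v]_{M_0})=s^{-1}[1,\overline v]_{M_0}$, not $s\cdot[1,\overline v]_{M_0}$; the error is made on both sides so the diagram still commutes, but the stated value is wrong.)

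The genuine gap is in (i)-right and (iii), which you yourself flag as "the principal difficulty" and then do not carry out. For (iii) you propose to evaluate $\mathcal S'(T_G)(m)(\overline v)=\sum_{n\in N_0\backslash N(F)}\overline{T_G(nm)(v)}$ directly; but the decisive step — showing that after the Corollary \ref{basic} vanishing the surviving terms contribute exactly once for $m\in M_0s$ and sum to zero otherwise — is precisely where the work lies, and it is not a routine Bruhat-cell bookkeeping exercise: one must count contributing cosets $n$, and in characteristic $p$ such counts can vanish mod $p$ (the paper's $GL(2)$ remark after Proposition \ref{xi} shows exactly this phenomenon for the non-coregular trivial representation). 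Your sketch gives no mechanism for controlling this count. Moreover your logical architecture is inverted relative to what actually works: you derive (i)-right as a "parallel argument" to (iii), whereas the tractable order is the reverse. The paper proves (i)-right by an explicit coset computation — coregularity plus Corollary \ref{basic}(ii) collapses the sum over $\mathcal P\backslash K$ in $\xi([1,v]_K)$, for $v\in V^{\overline N(k)}$, to a sum over $\overline N_{0,+}\backslash\overline N_0$, and then the double coset sums in $\xi\circ T_{K,\mathcal P}$ telescope to the single sum defining $T_{\mathcal P}$ — and then obtains (iii) for free: from $T_{K,\mathcal P}\circ\xi=T_G$ and $\xi\circ T_{K,\mathcal P}=T_{\mathcal P}$ one gets $\xi\circ T_G=T_{\mathcal P}\circ\xi$, hence $I_0\circ T_G=\zeta\circ T_{\mathcal P}\circ\xi=T_M\circ I_0$ by (ii), and $\mathcal S'(T_G)=T_M$ follows from the defining property of $\mathcal S'$ (Proposition \ref{I0}). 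You should adopt this formal deduction for (iii) and replace the unproved Bruhat/Cartan analysis by the concrete telescoping computation for (i)-right.
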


 By   (\ref{7}), the  $G(F)$-homomorphisms   corresponding to $\xi, T_{G}, T_{\mathcal P}$ and $ T_{K, \mathcal P}$,
  satisfy:  for $v\in V$, 
  \begin{align*}
   \xi &: \  [1, v]_{K} \mapsto \sum _{g\in  \mathcal P \backslash K} g^{-1} [1, \overline {gv})]_{ \mathcal P}  \ , \\
  T_{G} &: \  [1,v]_{K} \mapsto \sum _{g\in K\backslash KsK} g^{-1} [1, T_{G}(g)(v)]_{K} \ ,  \\
 T_{\mathcal P} &: \  [1,\overline v]_{\mathcal P} \to \sum _{g\in \mathcal P \backslash\mathcal Ps\mathcal P} g^{-1 } [1, T_{\mathcal P}(g)(\overline v)]_{\mathcal P} \ ,
  \\
  T_{K, \mathcal P} &: \  [1,\overline v]_{\mathcal P} \mapsto \sum _{g\in K \backslash Ks\mathcal P} g^{-1} [1, T_{K,\mathcal P}(g)(\overline v)]_{K}  \  . 
 \end{align*}
The formula for $T_{ \mathcal P}$  and for $T_{K, \mathcal P} $ simplify, using  (\ref{Iwa}):
\begin{equation}\label{KP}
 \mathcal Ps\mathcal P =  \mathcal Ps \overline N_{0+} \ \ {\rm and} \ \ Ks\mathcal P=Ks \overline N_{0+} \ , 
\end{equation}
and, for   $g$ in $s \overline N_{0+}$, we have  $T_{\mathcal P}(g)(\overline v) =\overline v$ and $ T_{K,\mathcal P}(g)(\overline v)= \varphi (\overline v)$ by the property (i) of the Hecke operators, because this is true for $g=s$ and
   $\overline N_{0+}$ acts trivially on $V_{N(k)}$. 
   
   The formula for $T_{G}$ also simplifies:  clearly the surjective map $h\mapsto sh:K \to sK$  induces a bijection
   $$ (K\cap s^{-1}Ks)\backslash K \to K\backslash KsK  \ . $$
We remark that
$K\cap s^{-1}Ks$ is contained in $\mathcal P $ (\cite{HV} 6.13 Proposition) and that the inclusion $\overline N_{0+}\subset \mathcal P $ induces a bijection 
$$ s^{-1}\overline N_{0}s \backslash  \overline N_{0,+} \to (K\cap s^{-1}Ks)\backslash \mathcal P  \ . $$
 This is a consequence of the Iwahori decomposition (\ref{Iwa}) and of the fact that $s$ is strictly $M$-positive.  The group $ \overline N_{0,+}$ acts trivially on $V$ and $ T_{G}(s)(v)=\varphi (\overline v)$ for $v\in V$. 
 
\bigskip We deduce that:
\begin{align}  \label{TP} T_{\mathcal P} &: \  [1,\overline v]_{\mathcal P} \mapsto \sum _{\overline n \in  s^{-1}\overline N_{0+}s \backslash \overline N_{0+}} \overline n^{-1}  s^{-1}[1,  \overline v]_{\mathcal P} \ ,
\\
T_{K, \mathcal P} &: \  [1,\overline v]_{\mathcal P} \mapsto \sum _{\overline n\in s^{-1}\overline N_{0}s \backslash  \overline N_{0+}} \overline n^{-1} s^{-1} [1,  \varphi (\overline v)]_{K}  \ ,
\\
 T_{G} & : \  [1,v]_{K} \mapsto  \sum _{h\in \mathcal P \backslash K } h^{-1} \sum _{\overline n \in s^{-1}\overline N_{0}s \backslash\overline N_{0+} } \overline n ^{-1} s^{-1}[1,  \varphi( \overline {hv})]_{K}  \ . 
 \end{align}

$T_{\mathcal P}([ 1,\overline v]_{\mathcal P})$ is the function in 
$\ind_{\mathcal P}^{G(F)}V_{N(k)}$ of support 
$\mathcal P s\mathcal P$ equal to $\overline v$ on $s\overline N_{0+}$, 

$T_{K,\mathcal P}([1,\overline v]_{\mathcal P})$ is the function in 
$\ind_{K}^{G(F)}V $ of support 
$K s\mathcal P$ equal to $\varphi( v)$ on $s\overline N_{0+}$.

 $ T_{G}([1,v]_{K} )$ is the function in  $\ind_{K}^{G(F)}V$ of support contained in $KsK$ equal to $ \varphi( \overline {hv})$ on $sh$ for all $h\in K$. 

\bigskip  We see on these formula that the left diagram in i is commutative  :
\begin{equation} T_{G}= T_{K,\mathcal P} \circ \xi  \ .  
\end{equation}

When $v$ lies in $V^{\overline N(k)}$,  $\varphi$ disappears from the formula of $T_{K,\mathcal P}([1,\overline v]_{\mathcal P})$, because   $\varphi(\overline v)=v$, hence:
  
\begin{equation}
T_{K, \mathcal P} ([1,\overline v]_{\mathcal P})= \sum _{\overline n\in s^{-1}\overline N_{0}s \backslash  \overline N_{0+}} \overline n^{-1} s^{-1} [1,   \overline v]_{K}  \ .
\end{equation}

\begin{remark} {\rm  
When $v\in V^{\overline U (k)}$ and $g\in G(k)$ we have $\overline{gv}\neq 0$ if and only if 
$g\in \overline P(k)\overline P_{V}(k)$   (Corollary \ref{basic}). We have $\overline P(k)\overline P_{V}(k)=M(k) \overline P_{V}(k)$. The inverse image in $K$ of 
$\overline P_{V}(k)  $  is a parahoric subgroup $\overline{ \mathcal P}_{V} $ acting on  $V^{\overline U(k)}$ by a character that we still denote $\overline \psi_{V}$. For $h\in \overline P_{V}(k)$ we have $hv= \overline \psi_{V}(h) v$ and $ \varphi (hv)=\overline \psi_{V}(h) v$.
 In 
  the formula for $\xi ([1,v]_{K})$ or   $T_{G} ([1,v]_{K})$,
we can replace the sum over $\mathcal P \backslash K$ by a sum over $\mathcal P \cap \overline{ \mathcal P}_{V} \backslash \overline{ \mathcal P}_{V}$, and we obtain for $v\in V^{\overline U (k)}$:
\begin{align} 
   \xi  ( [1, v]_{K})&= \sum _{h\in  \mathcal P \cap \overline{ \mathcal P}_{V} \backslash \overline{ \mathcal P}_{V}} \overline \psi_{V}(h)  h^{-1} [1, \overline v]_{ \mathcal P}  \ , \\
   T_{G} ( [1, v]_{K})&= \sum _{h\in \mathcal P \cap \overline{ \mathcal P}_{V} \backslash \overline{ \mathcal P}_{V} } \overline \psi_{V}(h) h^{-1}  \sum _{\overline n \in s^{-1}\overline N_{0}s \backslash\overline N_{0+} } \overline n ^{-1} s^{-1}[1,  v]_{K}  \ . 
 \end{align}  }
\end{remark}

 \bigskip  
Under the  restriction that  $V$ is  $M$-coregular and when $v\in  V^{\overline N (k)}$, the image of  $hV^{\overline N(k)}$ in $V_{N(k)}$ is $0$ when $h\in K$ does not belong to ${\mathcal P} \overline {\mathcal P}  $ (Corollary \ref{basic}). This vanishing simplifies the formula of $\xi ([1,v]_{K})$ and of $T_{G}([1,v]_{K})$, because 
the sum on $h$ in   $ \mathcal P \backslash K$ can be replaced by a sum on $\overline n$ in  $\overline N_{0,+}\backslash \overline N_{0}$;  for $T_{G}$ the two sums unite in a sum on  $s^{-1}\overline N_{0}s \backslash \overline N_{0}$; moreover using that $\overline N_{0}$ acts trivially on $V$, when $v$ lies in $V^{\overline N(k)}$  we have $ \overline n v =v = \varphi(\overline v)$, hence under our hypothesis on $(v,V)$ :

$ T_{G}([1,v]_{K} )$ is the function in  $\ind_{K}^{G(F)}V$ of support contained in $Ks \overline N_{0}$ equal to $v$ on $s \overline N_{0}$, 

   \begin{align}   
    \xi ([1, v]_{K})& =  \sum _{\overline n \in\overline N_{0+} \backslash   \overline N_{0}}  \overline n^{-1} [1, \overline {v}]_{ \mathcal P}  \ , 
\\
   T_{G}  ( [1,v]_{K})& =  \sum _{\overline n \in s^{-1}\overline N_{0}s \backslash\overline N_{0} } \overline n ^{-1} s^{-1}[1, v]_{K}   \ .
 \end{align} 
   \begin{align}   \label{xiTe}  
  ( \xi \circ   T_{K, \mathcal P})( [1,\overline v]_{\mathcal P})&=
 \sum _{\overline n\in s^{-1}\overline N_{0}s \backslash  \overline N_{0+}} \overline n^{-1} s^{-1} \sum _{\overline n' \in\overline N_{0+} \backslash   \overline N_{0}}  \overline n'^{-1} [1, \overline {v}]_{ \mathcal P}  = 
 \sum _{\overline n \in  s \overline N_{0+}s^{-1} \backslash \overline N_{0+}  }  \overline n^{-1} s^{-1}  [1, \overline {v})]_{ \mathcal P} \ .
 \end{align}
Comparing  (\ref{TP}) and (\ref{xiTe}) we see that : 
\begin{equation} \label{xiT} T_{\mathcal P} = \xi \circ   T_{K, \mathcal P} \ . 
\end{equation}
When $V$ is $M$-coregular,  the right diagram in i   is commutative.

\begin{remark}\label{for}
{\rm When $v\in V^{\overline N(k)}$ and  $V$ is $M$-coregular, we compute easily:
  \begin{align*}     
   (\xi \circ T_{G})([1,v]_{K}) & =  \sum _{\overline n \in s^{-1}\overline N_{0}s \backslash   \overline N_{0}}  \overline n^{-1} s^{-1}  \sum _{\overline n' \in\overline N_{0+} \backslash   \overline N_{0}}  \overline n'^{-1} [1, \overline {v})]_{ \mathcal P} =  \sum _{\overline n \in s^{-1}\overline N_{0+}s \backslash   \overline N_{0}}  \overline n^{-1} s^{-1}  [1, \overline {v})]_{ \mathcal P} \ , 
   \\
   (T_{\mathcal P}\circ \xi )([1,v]_{K} ) & = \sum _{\overline n \in\overline N_{0+} \backslash   \overline N_{0}}  \overline n^{-1} \sum _{\overline n' \in  s^{-1}\overline N_{0+}s \backslash \overline N_{0+}} \overline n'^{-1}  s^{-1}[1,  \overline {v}]_{\mathcal P}= \sum _{\overline n \in s^{-1}\overline N_{0+}s \backslash   \overline N_{0}}  \overline n^{-1} s^{-1}[1,  \overline {v}]_{\mathcal P} \ , 
   \\
   (T_{K, \mathcal P} \circ \xi) ([1, v]_{K})& =  \sum _{\overline n \in\overline N_{0+} \backslash   \overline N_{0}}  \overline n'^{-1} \sum _{\overline n\in s^{-1}\overline N_{0}s \backslash  \overline N_{0+}} \overline n^{-1} s^{-1} [1, v]_{K} =  \sum _{\overline n \in s^{-1}\overline N_{0+}s \backslash   \overline N_{0}}  \overline n^{-1} s^{-1}[1,  \overline {v}]_{K} \ , 
    \end{align*}  }
    
 \end{remark}
 
\bigskip  We consider now the diagram ii. with $\zeta$, without restriction on $V$.  We have 
\begin{equation}\label{TMz}
\zeta \circ T_{\mathcal P} = T_{M} \circ \zeta
\end{equation}
because   :

$(T_{M}\circ \zeta) ([1,\overline v]_{\mathcal P})$ 
    is the function $f_{\overline v} $ of support $P\overline N_{0+}$  and constant on $\overline N_{0+}$  with value $s^{-1}[1, \overline v]_{M_{0}}$,  because $\zeta([1,\overline v]_{\mathcal P})$ is the function $f_{\overline v} $ of support $P\overline N_{0+}$  and constant on $\overline N_{0+}$ with value $[1,\overline v]_{M_{0}} $, and    $T_{M}([1, \overline v]_{M_{0}})=s^{-1}[1, \overline v]_{M_{0}}$.

By (\ref{TP}),  $(\zeta  \circ T_{\mathcal P})([1,\overline v]_{\mathcal P})=\sum _{\overline n \in  s^{-1}\overline N_{0+}s \backslash \overline N_{0+}} \overline n^{-1}  s^{-1} \zeta ([1,  \overline v]_{\mathcal P})$. Hence $(\zeta  \circ T_{\mathcal P})([1,\overline v]_{\mathcal P})$ is also 
  the function $f_{\overline v} $ of support $P\overline N_{0+}$  and constant on $\overline N_{0+}$  with value $s^{-1}[1, \overline v]_{M_{0}}$.

 \bigskip Proof of iii. 
We proved that $\xi \circ T_{\mathcal P, K}= T_{\mathcal P}$ when $V$ is $M$-coregular. As in general $ T_{\mathcal P, K} \circ \xi = T_{G}$, one deduces $\xi \circ T_{G}=
T_{\mathcal P} \circ \xi$. As we always have $\zeta \circ T_{\mathcal P}= T_{M} \circ \zeta$, we obtain
$$\zeta \circ \xi \circ T_{G}=\zeta \circ T_{\mathcal P} \circ \xi=T_{M} \circ \zeta\circ \xi \ , $$
i.e. $I_{0}\circ T_{G}= T_{M}\circ I_{0}$. This implies $\mathcal S' (T_{G})=T_{M}$.

\bigskip This ends the proof of Proposition \ref{xi}. 

\bigskip

We can have   $\mathcal S'(T_{G}) = T_{M}$ even when the representation $V$ is not $M$-coregular. The trivial representation $V$ is never $M$-coregular because $M\neq G$.

\begin{remark} For any choice of $s\in M(F)$ strictly $M$-positive we have 
$\mathcal S'(T_{G}) = T_{M}$, when $G=GL(2,F)$,  $B=P=MN$ the upper triangular subgroup, $M$ the diagonal subgroup, $K=GL(2,o_{F})$ and $V$ the trivial representation of $GL(2,k)$.
\end{remark}

\begin{proof} For $t\in M(F)$,  the value of  $\mathcal S' (1_{KsK})$ at $t$ is the image in $C$ of the integer
$$n_{s}(t):= | \{ b \in F/o_{F} \ \ | \  \ n_{b}t \  {\rm in } \  KsK \} |  \ \ , \ \  n_{b}:=\begin{pmatrix} 1&b \cr 0 & 1\end{pmatrix} \ .
$$
The integer  $n_{s}(t)$ depends only on $sM_{0}$.
We claim that $n_{s}(s)=1$ and $n_{s}(t)\equiv 0$ modulo $p$  for $t$   not in $ sM_{0}$; 
this implies  $\mathcal S'(T_{G}) = T_{M}$.
  It suffices to check that the claim is true for  $ s_{p}^{n}$   with 
 $$
 s_{p}:=\begin{pmatrix} p_{F} &0 \cr 0 & 1\end{pmatrix}  
 $$
 and $n>1$,  because  $s$  belongs to $\cup_{n>1} Z(G)M_{0}s_{p}^{n}$ where  $Z(G)$ is the center of $G(F)$.
 
  It is well known that 
  the double coset $Ks_{p}K$ is a disjoint union of the $p+1$ cosets $Ks_{p}$ and  $K \begin{pmatrix} 1 &a \cr 0 & p_{F} \end{pmatrix}$ for $a$ in system of representatives of $o_{F}/p_{F}o_{F}$,  and more generally $Ks_{p}^{n}K$ is a disjoint union of the cosets $K \begin{pmatrix} p_{F}^{u} &a \cr 0 & p_{F}^{r} \end{pmatrix}$ for $a \in o_{F}/p_{F}^{r}o_{F}$ and for $u,r\in \mathbb N$ with $u+r=n$. It is more convenient to write $$  \begin{pmatrix} p_{F}^{u} &a \cr 0 & p_{F}^{r} \end{pmatrix}= n_{c} s_{p^{u,r}}\ \  {\rm with} \ \ s_{p^{u,r}}:=\begin{pmatrix} p_{F}^{u} & 0 \cr 0 & p_{F}^{r} \end{pmatrix}  $$
 for  $c=ap_{F}^{-r}\in p_{F}^{-r}o_{F}/o_{F}$.

  As $n_{b}t$ and the  representatives $n_{c} s_{p^{u,r}}$ of the cosets $K\backslash Ks_{p}K$ all belong to $B(F)$, $n_{s_{p}^{n}}(t)$ is also the number of $b\in F/o_{F}$ such that  $n_{b}t \in   \cup _{c,u,r}M_{0}N_{0} n_{c} s_{p^{u,r}} $. Hence $n_{s_{p}^{n}}(t) \neq 0$ is equivalent to $ t \in   M_{0} s_{p^{u,r}}$ and in this case
 $$ n_{s_{p}^{n}}(t) = n_{s_{p}^{n}}( s_{p^{u,r}}) = |p_{F}^{-r}o_{F}/o_{F}|=q^{r} \  $$
is equal to $1$ if $t\in M_{0}s_{p}^{n}$ and is divisible by $p$ otherwise.
\end{proof}

 \section{Main theorem}   
The main theorem   is a corollary of the following proposition  :
 \begin{proposition}\label{mainp}

 The map $\xi$ is injective; when $V$ is $M$-coregular, the image of $\xi$ contains $T_{\mathcal P}(\ind_{\mathcal P}^{G(F)}V_{N(k)})$. 

The  kernel of the map $\zeta$ is the $T_{\mathcal P}^{\infty}$-torsion part of $\ind_{\mathcal P}^{G(F)}V_{N(k)}$ and the representation $\ind_{P(F)}^{G(F)}(\ind_{M_{0}}^{M(F)}V_{N(k)})$
 is generated by  $$ (T_{M}^{-n}\circ  \zeta)([1, \overline v]_{\mathcal P}) \ \ {\rm for\  all } \ n\in \mathbb Z$$
 for  any fixed non-zero element $\overline v\in V_{N(k)}$.

 \end{proposition}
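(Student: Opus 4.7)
The first two assertions are immediate consequences of identities already at hand. Injectivity of $\xi$ is Lemma~\ref{inj}. When $V$ is $M$-coregular, equation~\eqref{xiT} reads $T_{\mathcal P}=\xi\circ T_{K,\mathcal P}$, hence for every $f$ we have $T_{\mathcal P}(f)=\xi(T_{K,\mathcal P}(f))\in\im\xi$.

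For the kernel of $\zeta$, the intertwining $\zeta\circ T_{\mathcal P}=T_M\circ\zeta$ of equation~\eqref{TMz}, together with the invertibility of $T_M$ on $\Ind_{P(F)}^{G(F)}(\ind_{M_0}^{M(F)}V_{N(k)})$, yields the easy inclusion $\{T_{\mathcal P}^{\infty}\text{-torsion}\}\subseteq\ker\zeta$: if $T_{\mathcal P}^n f=0$ then $\zeta(f)=T_M^{-n}\zeta(T_{\mathcal P}^n f)=0$. For the reverse inclusion, I would write any $f\in\ker\zeta$ as a finite sum $\sum_i c_i g_i^{-1}[1,\overline v_i]_{\mathcal P}$ and iterate \eqref{TP} (using $G(F)$-equivariance of $T_{\mathcal P}$) to obtain
\[
T_{\mathcal P}^n f \;=\; \sum_i c_i \sum_{\overline n\in s^{-n}\overline N_{0+}s^n\backslash\overline N_{0+}} g_i^{-1}\overline n^{-1}s^{-n}[1,\overline v_i]_{\mathcal P}.
\]
By Remark~\ref{explicit}, $\zeta(T_{\mathcal P}^n f)(h)$ is determined by the restriction of $T_{\mathcal P}^n f$ to $\mathcal P P(F)h=\overline N_{0+}P(F)h$. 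The strict contraction $s^{-1}\overline N_0 s\subsetneq\overline N_0$ ensures that, as $n$ grows, the $\mathcal P$-cosets $\mathcal P\overline n^{-1}s^{-n}g_i^{-1}$ appearing in the formula fall into pairwise distinct $\overline N_{0+}P(F)$-orbits, labelled by the Bruhat-type position of $s^{-n}g_i^{-1}$. Testing the vanishing $\zeta(T_{\mathcal P}^n f)=0$ at a sufficient family of $h$'s then isolates each individual contribution and forces each $c_i$-term, and hence the entire sum $T_{\mathcal P}^n f$, to vanish once $n$ is large. This support-separation analysis, in the spirit of Herzig's argument in the split hyperspecial case, is the principal technical obstacle.

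For the generation claim, let $F\in\Ind_{P(F)}^{G(F)}(\ind_{M_0}^{M(F)}V_{N(k)})$. By smoothness and the Iwasawa decomposition $G(F)=P(F)K$, together with the compactness of $G(F)/P(F)$, $F$ is determined by its values on a finite set of representatives $k_j\in K$, each value $F(k_j)\in\ind_{M_0}^{M(F)}V_{N(k)}$ being a finite sum of elementary elements $m^{-1}[1,\overline w]_{M_0}$ with $m\in M(F)$ and $\overline w\in V_{N(k)}$. The irreducibility of $V_{N(k)}$ as $M(k)$-module (Theorem~\ref{Curtis}) lets one rewrite any nonzero $\overline w$ as an $M_0$-combination of the fixed $\overline v$, and the identity $T_M^{-n}[1,\overline v]_{M_0}=s^n[1,\overline v]_{M_0}$ together with $M(F)$-translates coming from $G(F)$-equivariance allows one to adjust the $M(F)$-coordinate arbitrarily: since $T_M$ commutes with the $G(F)$-action, $m\cdot T_M^{-n}\zeta([1,\overline v]_{\mathcal P})=T_M^{-n}\zeta(m[1,\overline v]_{\mathcal P})$ has support $P(F)\overline N_{0+}m^{-1}$ with values controlled by the $P(F)$-equivariance imposed on $\Ind$. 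Routine bookkeeping then identifies each elementary summand of $F$ with a $C$-linear combination of $G(F)$-translates of some $T_M^{-n}\zeta([1,\overline v]_{\mathcal P})$, realizing $F$ inside the $G(F)$-subrepresentation generated by the family $\{T_M^{-n}\zeta([1,\overline v]_{\mathcal P})\}_{n\in\mathbb Z}$.
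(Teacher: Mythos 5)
Your treatment of the first two assertions and of the easy inclusion for $\ker\zeta$ is correct and matches the paper: injectivity of $\xi$ is Lemma \ref{inj}, the statement about the image of $\xi$ is exactly (\ref{xiT}), and $\{T_{\mathcal P}^{\infty}\text{-torsion}\}\subseteq\ker\zeta$ follows from (\ref{TMz}) and the invertibility of $T_M$. The generation claim is also argued along essentially the right lines (reduce to elementary functions supported on translates of $P(F)\overline N_{0,+}s^{n}$ and match them with $G(F)$-translates of $T_M^{-n}\zeta([1,\overline v]_{\mathcal P})$), though you should make explicit the topological input that the $G(F)$-translates of $P(F)\backslash P(F)\overline N_{0,+}s^{n}$ form a basis of compact open subsets of $P(F)\backslash G(F)$, which is what legitimizes the ``routine bookkeeping'' (this is Lemma \ref{n}(i) and Lemma \ref{gen} in the paper).

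The genuine gap is the reverse inclusion $\ker\zeta\subseteq\{T_{\mathcal P}^{\infty}\text{-torsion}\}$, which you yourself flag as ``the principal technical obstacle.'' Your proposed mechanism --- that for large $n$ the cosets $\mathcal P\,\overline n^{-1}s^{-n}g_i^{-1}$ fall into pairwise distinct $\overline N_{0,+}P(F)$-orbits so that each term can be isolated --- is not justified and is not what actually makes the argument work: distinct $\mathcal P$-cosets in the support of $T_{\mathcal P}^{n}f$ can perfectly well contribute to $\zeta(T_{\mathcal P}^{n}f)$ at the same point of $G(F)$ and cancel. The paper's argument replaces this by two precise statements. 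First (Lemma \ref{binj}), $\zeta$ is injective on functions supported in the positive cone $\mathcal P Z(F)^{+_M}K$: there, using $\mathcal P P(F)=\overline N_{0,+}P(F)$ and the contraction $z^{-1}\overline N_{0,+}z\subseteq\overline N_{0,+}$ for $z$ positive, one may take coset representatives in $Z(F)^{+}M_0$, and then intersecting the (disjoint) supports with $M(F)$ reads off each coefficient $\overline v_i$ from $\zeta(f)(1)\in\ind_{M_0}^{M(F)}V_{N(k)}$. Second (Lemma \ref{n}(ii)), for any finite union of cosets $X$ there is $n$ with $s^{n}X\subseteq\mathcal P Z(F)^{+_M}K$; this is not formal and uses the generalized Tits system $(G(F),\mathcal B,\mathcal N(F))$ and the fact that $K$ is special (so $\nu K=zK$ for suitable $z\in Z(F)$). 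With these two lemmas, $\zeta(f)=0$ gives $\zeta(T_{\mathcal P}^{n}f)=T_M^{n}\zeta(f)=0$ with $T_{\mathcal P}^{n}f$ supported in the positive cone, hence $T_{\mathcal P}^{n}f=0$. Without supplying substitutes for both lemmas, your sketch does not close.
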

 
For the map $\xi$, the proposition follows from (Lemma \ref{inj}) and (\ref{xiT}).
 The next three lemma will be used in the proof for the map $\zeta$.

 \begin{lemma}\label{binj} The map $\zeta$ is injective on the set of functions $f \in  \ind_{\mathcal P}^{G(F)}V_{N(k)}$ with support in ${\mathcal P} Z(F)^{+M}K$. 
   
   \end{lemma}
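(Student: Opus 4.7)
The plan is to use $G(F)$-equivariance of $\zeta$ to reduce to studying $\zeta(f)|_K$, and then exploit the strict $M$-positivity of elements of $Z(F)^{+_M}$ to recover each coefficient of $f$ pointwise. By Iwasawa $G(F)=P(F)K$ and $P(F)$-equivariance of $\Ind_{P(F)}^{G(F)}$, the element $\zeta(f)$ is determined by its restriction to $K$; moreover, writing the $G(F)$-action as $(g\cdot F)(y):=F(yg)$, the equivariance of $\zeta$ together with Remark \ref{explicit} gives $\zeta(f)(k)=\zeta(k\cdot f)(1)$, which depends only on the restriction of $f$ to $\mathcal{P}P(F)k$.

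Decompose $f=\sum_{[y]}[y,f(y)]_{\mathcal{P}}$ as a sum over $\mathcal{P}$-cosets in $\mathcal{P}Z(F)^{+_M}K$, choosing representatives of the form $y=zk_y$ with $z\in Z(F)^{+_M}$ and $k_y\in K$, taken modulo $Z_0$ and modulo $\mathcal{P}_z:=z^{-1}\mathcal{P}z\cap K$ respectively. For such a $y$, Definition \ref{defxizeta} and $G(F)$-equivariance give $\zeta([y,f(y)]_{\mathcal{P}})(k)=\zeta([1,f(y)]_{\mathcal{P}})(ky^{-1})$, which vanishes unless $kk_y^{-1}z^{-1}\in P(F)\overline{N}_{0,+}$. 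The strict $M$-positivity of $z$ yields both $z^{-1}\overline{N}_{0,+}z\subset\overline{N}_{0,+}$ and $z^{-1}N_0 z\supset N_0$, so together with the Iwahori factorization $\mathcal{P}=N_0 M_0\overline{N}_{0,+}$ one identifies
$$\mathcal{P}_z=z^{-1}\mathcal{P}z\cap K=P_0\cdot(z^{-1}\overline{N}_{0,+}z),$$
and the vanishing condition rewrites as $kk_y^{-1}\in\mathcal{P}_z$. When it holds, decomposing $kk_y^{-1}=m_0 n_0\overline{n}'$ with $m_0\in M_0$, $n_0\in N_0$, $\overline{n}'\in z^{-1}\overline{N}_{0,+}z$, and using that $P(F)$ acts on $W=\ind_{M_0}^{M(F)}V_{N(k)}$ through $M(F)$ while $z$ is central in $M(F)$, a direct computation gives
$$\zeta([y,f(y)]_{\mathcal{P}})(k)=[z,\,m_0\cdot f(y)]_{M_0}\in W,$$
a function supported on the single coset $M_0 z\subset M(F)$.

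To conclude, the cosets $M_0 z\subset M(F)$ for distinct representatives of $Z_0\backslash Z(F)^{+_M}$ are pairwise disjoint, so the equation $\zeta(f)(k)=0$ decomposes according to these $M_0$-supports. For each fixed $z$ and $k$, the condition $kk_y^{-1}\in\mathcal{P}_z$ selects exactly one contributing representative $y'$, namely the one with $k_{y'}$ in the $\mathcal{P}_z$-right-coset of $k$. Evaluating at $z\in M(F)$ gives $m_0\cdot f(y')=0$; since $m_0\in M_0$ acts invertibly on $V_{N(k)}$, we get $f(y')=0$. Letting $k$ and $z$ range, every coefficient of $f$ vanishes, so $f=0$.

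The main obstacle is the clean subgroup identification $\mathcal{P}_z=z^{-1}\mathcal{P}z\cap K=P_0(z^{-1}\overline{N}_{0,+}z)$, whose proof uses both halves of strict $M$-positivity (contraction of $\overline{N}_{0,+}$ and expansion of $N_0$ by conjugation by $z$). Outside $Z(F)^{+_M}$ these containments reverse, the $M_0$-supports of contributions from distinct $\mathcal{P}$-cosets can overlap, and the pointwise extraction used above fails; this is precisely why the support hypothesis cannot be relaxed.
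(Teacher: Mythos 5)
Your strategy---decompose $f$ over $\mathcal P$-cosets with representatives $y=zk_y$, $z\in Z(F)^{+_M}$, use positivity of $z$ to locate the support in $M(F)$ of each contribution to $\zeta(f)$, and conclude by disjointness---is the right one and close in spirit to the paper's. But the identity you yourself single out as the main obstacle is false: in general $z^{-1}\mathcal Pz\cap K\subsetneq P_0\,(z^{-1}\overline N_{0,+}z)$. The inclusion $\subset$ holds, but the reverse would require $zM_0z^{-1}\subset\mathcal P$, hence $zM_0z^{-1}=M_0$, which fails for a general $M$-positive $z$ (only the chosen $s$, central in $M(F)$, normalizes $M_0$). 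Concretely, for $G=GL_3$, $M=GL_2\times GL_1$, $z=\mathrm{diag}(p_F,1,1)\in Z(F)^{+_M}$ and $m_0=1+E_{21}\in M_0\subset P_0$, one has $zm_0z^{-1}\notin K$, so $m_0\in P_0(z^{-1}\overline N_{0,+}z)$ but $m_0\notin z^{-1}\mathcal Pz$. Since your representatives $k_y$ are taken modulo $z^{-1}\mathcal Pz\cap K$ while the non-vanishing condition is membership in the strictly larger set $P_0(z^{-1}\overline N_{0,+}z)$, the claim that exactly one representative contributes to $\zeta(f)(k)$ is wrong: several distinct cosets can contribute. A second, related error: you invoke ``$z$ is central in $M(F)$'' to write the contribution as $[z,m_0\cdot f(y)]_{M_0}$ with support $M_0z$; only $s$ is assumed central, not an arbitrary $z\in Z(F)^{+_M}$. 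The actual support is $M_0zm_0^{-1}$, which need not contain $z$ nor equal $M_0z$, so both the evaluation at $z$ and the disjointness claim for the cosets $M_0z$ as stated do not apply. (The argument is repairable: distinct contributing $\mathcal P$-cosets give contributions supported on distinct, hence disjoint, cosets $M_0zm_0^{-1}$, because $zm_0'^{-1}m_0z^{-1}\in M_0$ would force $zk_{y'}k_y^{-1}z^{-1}\in\overline N_{0,+}N_0M_0N_0\overline N_{0,+}\subset\mathcal P$---but this is exactly what you would need to prove.)

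The paper sidesteps all of this by a normalization you skip. Since $\zeta(f)(k)=\zeta(k\cdot f)(1)$ and $k\cdot f$ satisfies the same hypotheses, it suffices to show that $\zeta(f)(1)=0$ forces $f=0$ on $\mathcal PP(F)$. On $\mathcal PP(F)=\overline N_{0,+}P(F)$ the positivity of $z_i$ lets one replace each representative $z_ik_i$ by one lying in $Z(F)^{+}M_0\subset M(F)$; then the support of the $i$-th summand of $\zeta(f)(1)$ is exactly $M(F)\cap\mathcal Pz_ik_i=M_0z_ik_i$, and disjointness of the $\mathcal P$-cosets immediately gives disjointness of these supports, with no subgroup identification and no centrality of $z_i$ needed. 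I recommend reworking your argument along these lines.
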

\begin{proof}   Let $f$ such that $\zeta (f)=0$ with  support in ${\mathcal P} Z(F)^{+}K$. We claim that $f=0$ on $\mathcal P P(F)$. This implies that $f=0$ because $G(F)=P(F)K$ and for $k\in K$ the function $k^{-1}f$ satisfies the same conditions as $f$. To prove the claim, we use only that $\zeta (f)(1)=0$ in $\ind_{M_{0}}^{M(F)}V_{N(k)}$. As $\zeta (f)(1)$ depends only on the restriction of $f$ to  $\mathcal P P(F)$,  we assume as we may, that the support of $f$ is contained in 
$\mathcal P P(F)$. The support of $f$ is a finite disjoint union of ${\mathcal P} z_{i} k_{i}$ for $z_{i}\in Z(F)^{+}$ and $k_{i} \in K$, with  $z_{i}k_{i}\in \mathcal P P(F)$. We have $\mathcal P P(F) = \overline N_{0,+} P(F)$ hence $k_{i}\in z_{i}^{-1}\overline N_{0,+} z_{i}P(F)$. As $z_{i}$ is positive, $z_{i}^{-1}\overline N_{0,+} z_{i} \subset \overline N_{0,+} $. This implies that we can suppose $k_{i}\in P(F)\cap K$. As $P(F)\cap K = N_{0}M_{0}$ and $z_{i}$ is positive, we can suppose $k_{i}\in M_{0}$. We proved that the support of  $f$ is a finite disjoint union of ${\mathcal P} z_{i} k_{i}$ for $z_{i}\in Z(F)^{+}$ and $k_{i} \in M_{0}$. 
Taking the intersection with $M(F)$, the sets  $M(F)\cap {\mathcal P} z_{i} k_{i} $ are also disjoint. Writing 
$$f= \sum_{i} (z_{i}k_{i})^{-1}[1, \overline v_{i}])_{\mathcal P}$$
we have $\zeta(f)(1)=\sum_{i} (z_{i}k_{i})^{-1}[1,\overline v_{i}]_{M_{0}}$, and  $\zeta(f)(1)=0$ is equivalent to $\overline v_{i}=0$ for all $i$.

\end{proof}

\begin{lemma} \label{n} (i)\  A basis of the open compact subsets of  the compact space $ P(F)\backslash G(F)$ is given by  the  $G(F)$-translates of $ P(F)\backslash  P(F)  \overline N_{0,+} s ^{n} $, for all $n \in \mathbb N$.

(ii) \  For any  subset $X\subset G(F)$ with finite image in $ {\mathcal P} \backslash G(F)$ there exists a large integer $n\in \mathbb N$ such that $s^{n} X \subset   {\mathcal P} Z(F)^{+M}K$.

\end{lemma}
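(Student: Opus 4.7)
For (i), the plan is to exploit the open embedding $\overline N(F) \hookrightarrow P(F) \backslash G(F)$ coming from the big-cell decomposition ($P(F) \overline N(F)$ is open in $G(F)$ and $P(F) \cap \overline N(F) = \{1\}$), together with the fact that the family $\{s^{-n} \overline N_{0,+} s^n\}_{n \in \mathbb N}$ is a fundamental system of compact open neighborhoods of $1$ in $\overline N(F)$---a direct consequence of strict $M$-positivity of $s$, which makes conjugation by $s^{-1}$ strictly contract $\overline N_{0,+}$. Because $s^n \in M(F) \subseteq P(F)$, one has the identity
\[
P(F) \, \overline N_{0,+} \, s^n \;=\; P(F) \cdot (s^{-n} \overline N_{0,+} s^n),
\]
so $P(F) \backslash P(F) \overline N_{0,+} s^n$ is the image of $s^{-n}\overline N_{0,+} s^n$ under the open embedding, and these images form a fundamental system of open neighborhoods of the identity coset. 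Since $G(F)$ acts transitively on $P(F) \backslash G(F)$ by right multiplication, taking $G(F)$-translates produces a basis of the topology, proving (i).

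For (ii), the strategy has two parts. The first is a pointwise claim: for each $x \in G(F)$, $s^n x \in \mathcal P Z(F)^{+M} K$ for all $n$ above some threshold $n(x)$. To prove it, write $x = u m k$ via the Iwasawa decomposition $G(F) = N(F) M(F) K$ and $m = m_1 z m_2$ via the Cartan decomposition of the reductive group $M(F)$ with respect to its special parahoric $M_0$, so $m_1, m_2 \in M_0$ and $z \in Z(F)$. Centrality of $s$ in $M(F)$ lets me commute $s^n$ past $m_1, z, m_2$, yielding
\[
s^n x \;=\; (s^n u s^{-n}) \, m_1 \, (z s^n) \, m_2 \, k.
\]
Strict contraction of $N(F)$ by $s$ puts $s^n u s^{-n}$ in $N_0 \subseteq \mathcal P$ once $n \geq n_1(u)$, and strict $M$-positivity of $s$ puts $z s^n$ in $Z(F)^{+M}$ once $n \geq n_2(z)$; the factor $m_1$ lies in $M_0 \subseteq \mathcal P$ and $m_2 k \in K$, so the right side lies in $\mathcal P Z(F)^{+M} K$.

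The second part is the uniformity. Since $X$ has finite image in $\mathcal P \backslash G(F)$, $X \subseteq \mathcal P x_1 \cup \cdots \cup \mathcal P x_r$, which is contained in the compact set $Y := K x_1 \cup \cdots \cup K x_r$ (using $\mathcal P \subseteq K$). I would then establish the stability
\[
s \cdot \bigl( \mathcal P \, Z(F)^{+M} \, K \bigr) \;\subseteq\; \mathcal P \, Z(F)^{+M} \, K,
\]
from which $\bigl( s^{-n} \mathcal P Z(F)^{+M} K \bigr)_{n \in \mathbb N}$ is an increasing open cover of $G(F)$, and compactness of $Y$ yields a single $n$ with $Y \subseteq s^{-n} \mathcal P Z(F)^{+M} K$, i.e., $s^n X \subseteq \mathcal P Z(F)^{+M} K$. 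The crucial calculation for stability starts from $sp = (s n_0 s^{-1}) \, m_0 \, (s \overline n_+)$ using $p = n_0 m_0 \overline n_+$ in the Iwahori factorization of $\mathcal P$ and the centrality of $s$ in $M(F)$. I expect this stability verification to be the main technical obstacle, since one must carefully track how the expansion of $\overline N(F)$ by $s$ interacts with the monoid $Z(F)^{+M}$ on the right before being reabsorbed into $\mathcal P$ and $K$.
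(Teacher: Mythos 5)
Your part (i) coincides with the paper's own argument (big cell open in $P(F)\backslash G(F)$, the $s^{-n}\overline N_{0,+}s^{n}$ decreasing to $1$ in $\overline N(F)$, translates under the transitive right action), so there is nothing to add there. For part (ii) you take a genuinely different route: the paper stays inside the generalized Tits system, writing $G(F)={\cal B}Z(F)K$ and invoking the axiom $\nu'{\cal B}\nu\subset\bigcup_{x}{\cal B}\nu' x{\cal B}$ to control $z{\cal P}z_{i}k_{i}$ for all $z$ simultaneously, whereas you prove a pointwise containment (via Iwasawa for $G(F)$ and Cartan for $M(F)$ --- note that the decomposition $M(F)=M_{0}Z(F)M_{0}$ is exactly where the paper's BN-pair inputs reappear in your argument, so quote or prove it) and then obtain uniformity from compactness of $Kx_{1}\cup\dots\cup Kx_{r}$ together with the absorption property $s\Omega\subset\Omega$ for $\Omega={\mathcal P}Z(F)^{+M}K$. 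Your pointwise computation is correct. The step you flag as the main technical obstacle is in fact not one: the difficulty you anticipate (expansion of $\overline N_{0,+}$ under conjugation by $s$) disappears if you conjugate $\overline n_{+}$ by $z$ rather than by $s$. Indeed, with $p=n_{0}m_{0}\overline n_{+}$, $z\in Z(F)^{+M}$, $k\in K$, centrality of $s$ in $M(F)$ gives
\[
s\,p\,z\,k\;=\;(sn_{0}s^{-1})\,m_{0}\,(sz)\,(z^{-1}\overline n_{+}z)\,k ,
\]
where $(sn_{0}s^{-1})m_{0}\in N_{0}M_{0}\subset{\mathcal P}$ and $z^{-1}\overline n_{+}z\in\overline N_{0,+}\subset K$ because $s$ and $z$ are $M$-positive, and $sz\in Z(F)^{+M}$ because $Z(F)^{+M}$ is a monoid; hence $s\Omega\subset\Omega$, the sets $s^{-n}\Omega$ form an increasing open cover, and your compactness argument closes. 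With that two-line verification your proof of (ii) is complete and arguably more transparent; what the paper's argument buys instead is that it avoids the absorption lemma and compactness altogether, needing only finitely many Bruhat-type manipulations.
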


\begin{proof} See Herzig \cite{Her} Lemma 2.20.

(i) \ The compact space $P(F)\backslash G(F)$  is the union of the right $G(F)$-translates of the big cell $ P(F)\backslash    P(F) \overline N(F)$ which is open,  the $s^{-n}  \overline N_{0,+} s^{n}$ for $n\in \mathbb N$  form a decreasing sequence of open subgroups of $\overline N(F)$ converging to $1$.

(ii) \  Let $\mathcal N$ be the normalizer of $S$ in $G$ and let ${\cal B}$ be the inverse image of $B(k)$ in $K$ (an Iwahori subgroup). Then 
$(G(F), {\cal B}, \mathcal N(F))$ is a generalized Tits system \cite{HV}. We have:

a) \ $ G(F)=  {\cal B} \mathcal N(F)  {\cal B} $, 

b) \  for $\nu \in \mathcal N(F) $ there a finite subset $X_{\nu}$ in $\mathcal N(F)$ such that, for all $\nu'\in \mathcal N(F)$, we have 
$$\nu '{\cal B} \nu \subset \cup_{x \in X_{\nu}} {\cal B}\nu' x  {\cal B} \ . 
$$

c) As the parahoric group $K$ is special, for any $\nu \in \mathcal N(F) $   there exists $z\in Z(F)$ such that $\nu K= zK$ because $K$ contains representatives of the Weyl group.

We deduce  from a) and c) that  $G(F)= {\cal B} Z(F) K$.
We write, as we may,  $X$ as a finite union $X=\cup_{i} {\cal P} z_{i}k_{i}$ with $z_{i}\in Z(F), k_{i}\in K$.
We deduce from b) that, for any index $i$,   there are finitely many $n_{i,j}\in \mathcal N(F)$ such that $z {\cal B}z_{i} \subset \cup_{j}{\cal B}zn_{i,j}{\cal B}$ for all  $z\in Z(F) $.
It follows that 
 $$z {\cal P}z_{i}k_{i} \subset P_{0} z \overline N_{0,+}z_{i}k_{i}  \subset     \cup_{j} {\cal P} z n_{i,j}K$$
as $\overline N_{0,+}\subset {\cal B}$.   
We choose $z_{i,j}\in Z(F)$ such that $z_{i,j}K=n_{i,j}K$, as we may by c). 
There exists $n\in \mathbb N$ such that   $ s^{n} z_{i,j}\in Z(F)^{+M}$ for all $i,j$.  Hence $ s^{n} X 
\subset  \cup_{j}{\cal P} s^{n} z_{i,j}K \subset {\cal P} Z(F)^{+M}K.$

 \end{proof}
 Let $\sigma$ be a smooth $C$-representation of $M(F)$. For any non-zero $y\in \sigma$, there exists a function $f_{y} \in \Ind_{ P(F)}^{G(F)}\sigma$ of support $ P(F) \overline N_{0,+} $ and value $y$ on $\overline N_{0+}$ because the multiplication $P(F)\times \overline N_{0+}\to  P(F) \overline N_{0,+}$  is an homeomorphism.
 
\begin{lemma}\label{gen}  Let $\sigma$ be a smooth $C$-representation of $M(F)$ generated by an element $x$. Then the representation $ \Ind_{ P(F)}^{G(F)}\sigma$ is generated by 
the functions  $f_{s^{-n}x}$ of support $ P(F) \overline N_{0,+} $ and value $s^{-n}x$ on $\overline N_{0+}$, for all $n\in \mathbb Z \ .  $
\end{lemma}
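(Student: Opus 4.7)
My plan is to introduce the $G(F)$-subrepresentation $W \subseteq \Ind_{P(F)}^{G(F)}\sigma$ generated by the family $\{f_{s^{-n}x}\}_{n \in \mathbb{Z}}$ and show $W = \Ind_{P(F)}^{G(F)}\sigma$ in three descending steps: first reduce to showing $f_y \in W$ for arbitrary $y \in \sigma$; then, using that $\sigma = C[M(F)] \cdot x$ by hypothesis, reduce to the case $y = \sigma(m)x$ for any $m \in M(F)$; and finally exhibit each $f_{\sigma(m)x}$ as an explicit finite sum of $G(F)$-translates of a single generator $f_{s^{N}x}$.

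For the first step, take $f \in \Ind_{P(F)}^{G(F)}\sigma$; its image in $P(F)\backslash G(F)$ is compact open, so by Lemma \ref{n}(i) I cover it by finitely many disjoint basic opens $P(F)\backslash P(F)\overline{N}_{0,+} s^{n_i} g_i$. Smoothness of $f$ means it is right-invariant under some compact open $K' \subset G(F)$; since $s^{-n}\overline{N}_{0,+} s^n$ shrinks to $\{1\}$ as $n$ grows, I may enlarge each $n_i$ so that $g_i^{-1}(s^{-n_i}\overline{N}_{0,+} s^{n_i})g_i \subset K'$, which forces $f$ to be literally constant on $\overline{N}_{0,+} s^{n_i} g_i$ with some value $y_i \in \sigma$. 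A direct calculation gives $f|_{P(F)\overline{N}_{0,+}s^{n_i}g_i} = (g_i^{-1}s^{-n_i}) \cdot f_{y_i}$, so $f$ is a $C$-linear combination of $G(F)$-translates of the $f_{y_i}$. The assumption $\sigma = C[M(F)]\cdot x$ then expresses each $y$ as a finite sum $\sum_j c_j\sigma(m_j)x$, reducing the problem to showing $f_{\sigma(m)x} \in W$ for every $m \in M(F)$.

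For the third step, fix $m \in M(F)$. Since $m\overline{N}_{0,+} m^{-1}$ is a compact subgroup of $\overline{N}(F)$ and $\bigcup_{N} s^{N}\overline{N}_{0,+} s^{-N} = \overline{N}(F)$, I may pick $N \in \mathbb{N}$ large enough that, setting $m' := s^{-N}m \in M(F)$, one has $m'\overline{N}_{0,+} m'^{-1} \subseteq \overline{N}_{0,+}$ (using that $s$ is central in $M(F)$). A direct computation with left $P(F)$-equivariance, together with the relation $s^{-N}\sigma(m)s^{N} = \sigma(m)$ coming from centrality, shows that $m' \cdot f_{s^{N}x}$ has support $P(F)\cdot m'\overline{N}_{0,+} m'^{-1}$ and the constant value $\sigma(m)x$ on $m'\overline{N}_{0,+} m'^{-1}$. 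Choosing coset representatives $\overline{c}_1,\dots,\overline{c}_r$ for the finite quotient $\overline{N}_{0,+}/(m'\overline{N}_{0,+} m'^{-1})$, the sum $\sum_{j}(\overline{c}_j^{-1}m') \cdot f_{s^{N}x}$ has pairwise disjoint supports whose union modulo $P(F)$ is exactly $\overline{N}_{0,+}$, and takes the constant value $\sigma(m)x$ there; this sum therefore coincides with $f_{\sigma(m)x}$, placing $f_{\sigma(m)x}$ in $W$ and completing the proof.

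The main obstacle I expect is the smoothness bookkeeping in the first step, where one must simultaneously tune the exponents $n_i$ to finitely many translates $g_i$ and to the $K'$-stabilizer of $f$, so that $f$ really is literally constant on each piece and the values $y_i$ make sense as fixed elements of $\sigma$. Once that is settled, the third step is conceptually the key idea: absorbing $m$ into $m' = s^{-N}m$ to force $m'\overline{N}_{0,+} m'^{-1}$ inside $\overline{N}_{0,+}$, then partitioning by finitely many right cosets, reduces $f_{\sigma(m)x}$ to a concrete finite sum of $G(F)$-translates of the single generator $f_{s^{N}x}$.
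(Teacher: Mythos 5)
Your argument is correct and follows essentially the same route as the paper's proof: reduce via Lemma \ref{n}(i) and smoothness to functions constant on a single basic open set, use that $x$ generates $\sigma$ to reduce to values $\sigma(m)x$, and then absorb $m$ into a power of $s$ so that the conjugate $m'\overline N_{0,+}m'^{-1}$ sits inside $\overline N_{0,+}$ and the target function becomes a finite sum of $G(F)$-translates of a generator. The only slip is cosmetic: since the $G(F)$-action is by right translation, the support of $(\overline c_j^{-1}m')\cdot f_{s^{N}x}$ is $P(F)\,(m'\overline N_{0,+}m'^{-1})\,\overline c_j$, so you should partition $\overline N_{0,+}$ into \emph{right} cosets of $m'\overline N_{0,+}m'^{-1}$ rather than left ones.
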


\begin{proof} By Lemma \ref{n}, we reduce to 
show that  any function $f_{n,mx} \in  \Ind_{ P(F)}^{G(F)}\sigma$ of support contained in $ P(F) \overline N_{0,+}s^{n}$  equal to   $ mx$ on $ \overline N_{0+}s^{n}$,  for $n\in \mathbb N $ and   $m\in M(F)$,  is contained in 
the subrepresentation generated by $f_{s^{-r}x}$ for all $r\in \mathbb Z$.  
The function  $ m^{-1}f_{n,mx} $ has  support in  $ P(F)\backslash  P(F)  \overline N_{0+}s^{n} $ and value $s^{-n}x$ on the compact open subset $m^{-1} s^{-n}\overline N_{0+}s^{n}m$ of $\overline N(F)$;   this set is a finite disjoint union of $s^{-n'}\overline N_{0+}s^{n'}\overline n$  with $\overline n \in \overline N(F)$ and $n'\in \mathbb N$.  For a non-zero $y\in \sigma$, the function  $(s^{n'}\overline n)^{-1}   f_{y} \in  \Ind_{ P(F)}^{G(F)}\sigma$ has support  $P(F)\overline N_{0+}s^{n'}\overline n $ and value $s^{-n'}y$ on $s^{-n'}\overline N_{0+}s^{n'}\overline n$.  The  sum of $(s^{n'}\overline n)^{-1}  f_{ s^{n'-n}x} $ is equal to $m^{-1}f_{n,mx} $. 
\end{proof}

To analyse the image of $\zeta$,  we take  in Lemma \ref{gen} the representation $\sigma= \cind_{M_{0}}^{M(F)} V _{N(k)}$  generated by  $x= [1, \overline v]_{ M_{0}}$,  for any non-zero fixed  $\overline v\in V_{N(k)}$, and  we note that 
for $n\in \mathbb Z$, by definition \ref{TM} and \ref{defxizeta}, 
  $$(T_{ M} ^{n}  \circ \zeta ) ( [1,\overline v]_{\overline {\cal P}}) =f_{s^{-n}x} .$$
We obtain  that  
  the representation  $ \Ind_{ P(F)}^{G(F)}\  (\cind_{M_{0}}^{M(F)} V _{N(k)})$ is generated by the elements 
$(T_{ M} ^{n}  \circ \zeta )( [1,\overline v]_{\overline {\cal P}}))  $ for all $ n \in \mathbb Z \ .$

 We consider now an element
   $f $  in the kernel of $\zeta$. The function $f$ vanishes outside of a compact  set $ X$
   of finite image in $ {\mathcal P} \backslash G(F)$. We choose the integer $n\in \mathbb N$ such that $s^{n }X \subset {\mathcal P} Z(F)^{+}K$ (Lemma \ref{n} ii).   The support of 
   $T_{\mathcal P}^{n} $  is $\mathcal P s^{n}\mathcal P$ by (\ref{Iwa}) and the positivity of $s$. The support of $T_{\mathcal P}^{n} (f)$ is contained in $\mathcal P s^{n }X$ hence in $ {\mathcal P} Z(F)^{+}K$. By Lemma \ref{binj}, we conclude that $T_{\mathcal P}^{n} (f)=0$.
 This ends the proof of Proposition \ref{mainp}.

\begin{corollary} The kernel of $I_{0} =\zeta \circ \xi$ is the space of $T_{\mathcal P}^{\infty}$-torsion elements in $\ind_{K}^{G(F)}V$ identified via $\xi$ to a subspace of $\ind_{K}^{G(F)}\ind_{P(k)}^{G(k)}V_{N(k)}$.

\end{corollary}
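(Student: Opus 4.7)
The plan is to derive this corollary immediately from Proposition \ref{mainp} together with the injectivity of $\xi$ (Lemma \ref{inj}), using the factorization $I_{0}=\zeta\circ\xi$ of Lemma \ref{zetaxi}. No new computation is needed: the corollary is essentially a translation of the characterisation of $\ker\zeta$ across the inclusion $\xi$. No $M$-coregularity hypothesis is invoked, since the part of Proposition \ref{mainp} we rely on (the description of $\ker\zeta$) holds unconditionally.

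First I would recall the identification $\ind_{\mathcal P}^{G(F)}V_{N(k)}=\ind_{K}^{G(F)}\ind_{P(k)}^{G(k)}V_{N(k)}$ coming from the transitivity of compact induction, so that $\xi$ realizes $\ind_{K}^{G(F)}V$ as a subspace of the ambient space carrying the $T_{\mathcal P}$-action. This subspace is in general not stable under $T_{\mathcal P}$, but the notion of being $T_{\mathcal P}^{\infty}$-torsion still makes sense for any element of the ambient space, so the phrase ``$T_{\mathcal P}^{\infty}$-torsion elements in $\ind_{K}^{G(F)}V$ identified via $\xi$'' refers unambiguously to those $f\in\ind_{K}^{G(F)}V$ for which $T_{\mathcal P}^{n}\xi(f)=0$ for some $n\in\mathbb N$.

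Next, since $\xi$ is injective, $f\in\ker I_{0}$ if and only if $\zeta(\xi(f))=0$, i.e. $\xi(f)\in\ker\zeta$. Proposition \ref{mainp} states that $\ker\zeta$ is precisely the $T_{\mathcal P}^{\infty}$-torsion submodule of $\ind_{\mathcal P}^{G(F)}V_{N(k)}$. Combining these, $f\in\ker I_{0}$ if and only if $\xi(f)$ is $T_{\mathcal P}^{\infty}$-torsion, which under the identification above is exactly the stated description.

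There is no real obstacle here; the only point to be a bit careful about is clarifying the meaning of ``$T_{\mathcal P}^{\infty}$-torsion in $\ind_{K}^{G(F)}V$'', which is only defined by transporting along $\xi$. Once that linguistic point is pinned down, the corollary is a one-line consequence of $I_{0}=\zeta\circ\xi$, the injectivity of $\xi$, and the computation of $\ker\zeta$ in Proposition \ref{mainp}.
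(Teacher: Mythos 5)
Your proposal is correct and is exactly the argument the paper intends: the corollary follows immediately from the factorization $I_{0}=\zeta\circ\xi$, the injectivity of $\xi$ (Lemma \ref{inj}), and the unconditional identification of $\ker\zeta$ with the $T_{\mathcal P}^{\infty}$-torsion in Proposition \ref{mainp}. Your clarification of what ``$T_{\mathcal P}^{\infty}$-torsion in $\ind_{K}^{G(F)}V$'' means via transport along $\xi$ is the right reading of the statement.
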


 In the diagram (\ref{diagra})
the representations are $C[T]$-modules, where $T$ acts as on the middle space by  
 $ T_{K, \mathcal P}$, on the right space by $T_{M}$ and on the left space by $ (\mathcal S')^{-1}(T_{M} ) $. Proposition \ref{xi} tells us that:
 
 The map $\zeta$ is $C[T]$-linear. 
 
 When $V$  is $M$-coregular, the map $\xi$ is $C[T]$-linear and  $ (\mathcal S')^{-1}(T_{M} ) =T_{G}$.

 \begin{corollary}\label{Tzeta} i. \ The $T$-localisation  $\zeta_{T}$  of  $\zeta$ is an isomorphism.

ii. \ When $V$  is $M$-coregular, the $T$-localisation $\xi_{T}$  of   $\xi$  is an isomorphism.
 
 \end{corollary}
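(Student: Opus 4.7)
The plan is to deduce both items directly from Proposition \ref{mainp} together with the compatibilities of Proposition \ref{xi}, using only the formal mechanics of $C[T]$-localisation.

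For (i), Proposition \ref{xi}(ii) makes $\zeta$ a $C[T]$-linear map, with $T$ acting as $T_{\mathcal P}$ on the source and as $T_M$ on the target. Since $T_M$ is already central invertible in $\mathcal H(M(F),M_0,V_{N(k)})$, the target is unchanged by localisation. For injectivity of $\zeta_T$, Proposition \ref{mainp} identifies $\ker\zeta$ with the $T_{\mathcal P}^\infty$-torsion part of the source, which is precisely what is killed by localisation at $T=T_{\mathcal P}$. For surjectivity, Proposition \ref{mainp} says the target is generated as a $G(F)$-representation by the elements $(T_M^{-n}\circ\zeta)([1,\overline v]_{\mathcal P})$ for $n\in\mathbb Z$ and any fixed non-zero $\overline v$; each of these is of the form $\zeta_T(T^{-n}[1,\overline v]_{\mathcal P})$ in the localised source, so the image of $\zeta_T$ is a $G(F)$-subrepresentation containing all these generators, hence coincides with the target.

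For (ii), combining the two diagrams of Proposition \ref{xi}(i) under the $M$-coregularity hypothesis yields $T_{\mathcal P}\circ\xi=\xi\circ T_G$, so $\xi$ is $C[T]$-linear with $T$ acting by $T_G$ on the left and by $T_{\mathcal P}$ in the middle. For injectivity of $\xi_T$: if $\xi_T(f)=0$ in the localisation, then some $T_{\mathcal P}^{k}\xi(f)=\xi(T_G^{k}f)$ vanishes, and Lemma \ref{inj} forces $T_G^{k}f=0$, so $f$ dies after inverting $T$. For surjectivity: Proposition \ref{mainp} gives $T_{\mathcal P}(\ind_{\mathcal P}^{G(F)}V_{N(k)})\subseteq\xi(\ind_K^{G(F)}V)$, so for any $g$ in the localised source we can write $T_{\mathcal P}g=\xi(f)$ for some $f$, whence $g=T^{-1}T_{\mathcal P}g=\xi_T(T^{-1}f)$.

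There is no real obstacle here, since the substantial representation-theoretic content has already been absorbed by Proposition \ref{mainp} (the identification of $\ker\zeta$ and the precise description of $\mathrm{Im}\,\xi$ in terms of $T_{\mathcal P}$), while Proposition \ref{xi} provides the intertwining relations needed for the abstract variable $T$ to act coherently on all three modules. The only care required is to keep the three distinct $T$-actions ($T_G$, $T_{\mathcal P}$, $T_M$) straight and to invoke the standard facts that localisation at a central operator is exact and annihilates exactly its power-torsion submodule.
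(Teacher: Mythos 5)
Your proposal is correct and follows essentially the same route as the paper: the corollary is a formal consequence of Proposition \ref{mainp} (kernel of $\zeta$ = $T_{\mathcal P}^{\infty}$-torsion, generators of the target, injectivity of $\xi$ and $\im\xi\supseteq T_{\mathcal P}(\ind_{\mathcal P}^{G(F)}V_{N(k)})$) combined with the $C[T]$-linearity relations from Proposition \ref{xi}, exactly as you argue. Your bookkeeping of the three $T$-actions is right (note the paper's body has a typo saying $T$ acts on the middle space by $T_{K,\mathcal P}$; as you and the introduction have it, the correct operator is $T_{\mathcal P}$).
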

  
The map $\Theta$ is the $T$-localisation  of   $I_{0}=\zeta \circ \xi$.
By i., the map $\Theta =  \zeta_{T} \circ \xi_{T}$ is an isomorphism if and only if 
 $\xi_{T}$ is an isomorphism. The map $\Theta$ is always injective (as $\xi $ is injective)
 and is surjective if and only if $\xi_{T}$ is surjective.
 
We prove now the converse of Corollary \ref{Tzeta} ii.

 \begin{proposition}\label{Sxi} When  $\xi_{T}$  is surjective, $V$  is $M$-coregular.
 \end{proposition}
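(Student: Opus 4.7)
I argue by contrapositive: assume $V$ is not $M$-coregular, and exhibit $f_0 \in \ind_{\mathcal P}^{G(F)}V_{N(k)}$ such that $T_{\mathcal P}^n f_0 \notin \mathrm{image}(\xi)$ for every $n \geq 0$. Since $\zeta_T$ is an isomorphism by Corollary \ref{Tzeta}(i), this shows that the localized class of $\zeta(f_0)$ lies outside $\mathrm{image}(\Theta)$, and hence $\xi_T$ is not surjective.

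By Definition \ref{regu} and Corollary \ref{basic}, failure of $M$-coregularity provides $h \in K$ with reduction $\bar h \in G(k)\setminus P(k)\overline P(k)$ and $v \in V^{\overline N(k)}$ such that $\overline{hv} \neq 0$ in $V_{N(k)}$. Take $f_0 := h\cdot [1,\overline{hv}]_{\mathcal P}$. The key tool is the concrete description of $\mathrm{image}(\xi)$: under the identification $\ind_{\mathcal P}^{G(F)}V_{N(k)} = \ind_K^{G(F)}\ind_{P(k)}^{G(k)}V_{N(k)}$, the map $\xi$ is inflated from the natural $G(k)$-equivariant embedding $\phi\colon V \hookrightarrow \ind_{P(k)}^{G(k)}V_{N(k)}$, $v'\mapsto (\bar g\mapsto \overline{\bar g v'})$; hence an element of $\ind_{\mathcal P}^{G(F)}V_{N(k)}$ lies in $\mathrm{image}(\xi)$ if and only if, on each $K$-coset $Kg$ in its support, its restriction (interpreted in $\ind_{P(k)}^{G(k)}V_{N(k)}$) lies in $\phi(V)$.

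I apply this criterion to the $K$-coset $Ks^n h^{-1}$, which lies in the support of $T_{\mathcal P}^n f_0$ (since $\mathcal P \subset \mathcal P s^n\mathcal P s^{-n}$). Using the Iwahori decomposition $\mathcal P = N_0 M_0 \overline N_{0,+}$ together with the strict $M$-positivity of $s$ and formula \eqref{KP}, one verifies $\mathcal Ps^n\mathcal Ps^{-n}\cap K = \mathcal P\overline{\mathcal P}$ (the preimage of $P(k)\overline P(k)$ in $K$), and that the restriction of $T_{\mathcal P}^n f_0$ to $Ks^n h^{-1}$ corresponds to the element $\psi \in \ind_{P(k)}^{G(k)}V_{N(k)}$ which, on $P(k)\overline P(k) = P(k)\overline N(k)$ with its unique decomposition, is given by $\psi(\bar{\mathcal p}\,\bar{\overline n}) = \bar{\mathcal p}\,\overline{hv}$, and vanishes on $G(k)\setminus P(k)\overline P(k)$. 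If $\psi = \phi(v^*)$ for some $v^* \in V$, evaluation at $\bar k = 1$ forces $\overline{v^*} = \overline{hv}$, so by Corollary \ref{basic}(i) $v^*$ is the unique element of $V^{\overline N(k)}$ projecting to $\overline{hv}$. For $\bar k \notin P(k)\overline P(k)$, the vanishing $0 = \psi(\bar k) = \overline{\bar k v^*}$ combined with Corollary \ref{basic}(ii) gives $\bar k \notin P(k)\overline P_V(k)\overline P(k)$; thus $P(k)\overline P_V(k)\overline P(k) \subset P(k)\overline P(k)$, equivalently $\overline P_V \subset P\overline P$. By the Bruhat decomposition $G = \bigsqcup_{w\in W_M\backslash W/W_M} Pw\overline P$ (with open cell $P\overline P = P\cdot 1\cdot \overline P$), a standard parabolic $\overline P_V \supset \overline B$ contained in the open cell $P\overline P$ must satisfy $W_{M_V}\subset W_M$, i.e., $\overline P_V \subset \overline P$ — which is the $M$-coregularity we assumed to fail, yielding the contradiction.

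The main obstacle is the support analysis in the third paragraph: proving $\mathcal Ps^n\mathcal Ps^{-n}\cap K = \mathcal P\overline{\mathcal P}$ and identifying the restriction of $T_{\mathcal P}^n f_0$ to $Ks^nh^{-1}$ with the explicit $\psi$ above. This relies on the Iwahori decomposition of $\mathcal P$ and the strict $M$-positivity of $s$, which together guarantee $s^{-n}\overline N_0 s^n \subset \overline N_{0,+}$ for all $n\geq 1$, so that conjugation by $s^n$ carries $\overline N_{0,+}$ onto an open subgroup of $\overline N(F)$ containing $\overline N_0$, and $\overline N_0$ precisely parametrizes the big cell contribution $\overline N(k)$ at the finite reductive-group level.
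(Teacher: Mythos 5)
Your strategy is the paper's own argument run in contrapositive form: test whether $T_{\mathcal P}^{n}$ of a generator of $\ind_{\mathcal P}^{G(F)}V_{N(k)}$ lies in the image of $\xi$, reduce by a support computation to an identity in $\ind_{P(k)}^{G(k)}V_{N(k)}$ over a single $K$-coset, and conclude with Corollary \ref{basic}. The support analysis is correct (it rests, as you say, on $s^{-n}\overline N_{0}s^{n}\subset\overline N_{0,+}$ and $s^{n}\overline N_{0,+}s^{-n}\supset\overline N_{0}$, and matches steps 3--4 of the paper's proof). But both of your concluding deductions have genuine gaps. First, from $\psi=\phi(v^{*})$ you infer that $v^{*}\in V^{\overline N(k)}$ merely from $\overline{v^{*}}=\overline{hv}$ (evaluation at $1$). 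That does not follow: Corollary \ref{basic}(i) gives uniqueness of a lift \emph{inside} $V^{\overline N(k)}$, but the single evaluation only determines $v^{*}$ modulo $\ker(V\to V_{N(k)})$ and does not place it in that subspace. You must use the evaluations at all $\bar\mu\in\overline N(k)$, i.e. $\overline{\bar\mu v^{*}}=\overline{hv}$ for every $\bar\mu$, and then argue as the paper does: writing $v^{*}=\varphi(\overline{hv})+w$, the $\overline N(k)$-submodule generated by $w$ lies in $\ker(V\to V_{N(k)})$, and if nonzero it would have nonzero $\overline N(k)$-invariants (a $p$-group acting in characteristic $p$), contradicting \ref{basic}(i). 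This is one of the two substantive points of the proof and cannot be waved through.

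Second, from $\overline{\bar k v^{*}}=0$ for the single vector $v^{*}$ you invoke Corollary \ref{basic}(ii) to conclude $\bar k\notin P(k)\overline P_{V}(k)\overline P(k)$. But \ref{basic}(ii) concerns the image of the whole subspace $\bar kV^{\overline N(k)}$ in $V_{N(k)}$; the vanishing of one vector's image does not give the vanishing of the subspace's image. You need to vary the test vector: either run the whole argument for every $x\in V_{N(k)}$ (the paper's ``and this for all $x\in V_{N(k)}$''), or observe that $\psi(\bar k\bar m)=0$ for all $\bar m\in M(k)$ as well (since $G(k)\setminus P(k)\overline P(k)$ is stable under right multiplication by $M(k)$) and use that $M(k)v^{*}$ spans $V^{\overline N(k)}$ by irreducibility. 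Note also that your choice $f_{0}=h\cdot[1,\overline{hv}]_{\mathcal P}$ is what forces you through this detour, because the resulting $v^{*}$ is $\varphi(\overline{hv})$, not $v$: had you taken $f_{0}=[1,\overline v]_{\mathcal P}$ with $v\in V^{\overline N(k)}$ the witness satisfying $\overline{hv}\neq0$, then (after the corrected first step) $v^{*}=\varphi(\overline v)=v$, and the single evaluation $\psi(\bar h)=0=\overline{\bar h v}$ immediately contradicts $\overline{hv}\neq0$, with no appeal to \ref{basic}(ii) or the Bruhat decomposition needed.
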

 
 \begin{proof}  1) Set $\tau_{G}:= {\mathcal S'}^{-1}(T_{M})$. Par definition, $I_{0}\circ \tau_{G}=T_{M}\circ I_{0}$, hence 
 $$\zeta \circ \xi \circ \tau_{G}=T_{M}\circ \zeta \circ \xi = \zeta \circ T_{\mathcal P} \circ \xi $$
As the localisation $T$ of $\zeta$ is injective, $\xi \circ \tau_{G}=T_{\mathcal P} \circ \xi $ modulo $T_{\mathcal P}^{\infty}$-torsion.

2) The surjectivity of  $\xi_{T}$ means that for all $f\in \ind_{\mathcal P}^{G(F)}V_{N(k)}$ there exists an $n\in \mathbb N$ such that $T_{\mathcal P}^{n}(f)$ belongs in the image of $\xi$ (one can change $n$ by any $n'\geq n$). As the representation is generated by $[1,x]_{\mathcal P}$ for $x\in V_{N(k)}$, the hypothesis is that exists an $n\in \mathbb N$ such that $T_{\mathcal P}^{n}([1,x]_{\mathcal P})$ belongs in the image of $\xi$ for all $x\in V_{N(k)}$. The Hecke operator $T_{\mathcal P}^{n}$ is analogous to the Hecke operator $T_{\mathcal P}$ but associated to $s^{n}$ instead of $s$. Replacing $s$ by $s^{n}$ we can work under the hypothesis:  $T_{\mathcal P}([1,x]_{\mathcal P})$ belongs in the image of $\xi$ for all $x\in V_{N(k)}$. 

3)  The support of $T_{\mathcal P}([1,x]_{\mathcal P})$ is contained in $ {\mathcal P}s{\mathcal P} = {\mathcal P} s \overline N_{0+}$ and if 
\begin{equation}\label{ef}  T_{\mathcal P}([1,x]_{\mathcal P})=\xi (f)
\end{equation}
 for some $f\in \ind_{K}^{G(F)}V$, the support of $f$ must be contained in $Ks{\mathcal P}=Ks \overline N_{0+}$. Writing 
$Ks{\mathcal P}$ as a disjoint union of cosets $Ks\overline n_{i}$ with $\overline n_{i}\in \overline N_{0+}$, and $f=\sum_{i}(s\overline n_{i})^{-1}[1,v_{i}]_{K}$ for a choice of non-zero $v_{i}\in V$ and a finite set of indices $i$. The equality (\ref{ef}) means that, for each index $i$, $v_{i}$ satisfies the two conditions a) and b):  
  for any  $k$ in $K$, 

a)  if  $ks\overline n_{i} \in  {\mathcal P}s{\mathcal P}$, i.e. $ks\overline n_{i}=h s\overline n$ with $h\in \mathcal P$ and $\overline n \in \overline N_{0+}$, 
then $\overline {k v_{i}}= hx$,

b) if  $ks\overline n_{i} \not\in  {\mathcal P}s{\mathcal P}$ then  $\overline {k v_{i}}= 0$.

4) We show that the condition a) implies that $v_{i}=\varphi(x)$  where $\varphi(x)\in V^{\overline N(k)}$ lifts $x$. 

We have $k  =h s\overline n \overline n_{i}^{-1}s^{-1}$  and 
$s\overline n \overline n_{i}^{-1} s^{-1}\in \overline N(F) \cap K= \overline N_{0}$, hence $h\in \mathcal P \overline N_{0} $.  Conversely if $k=h \nu$ with $h\in \mathcal P$ and $\nu\in \overline N_{0} $, then $ks\overline n_{i}=   h s s^{-1} \nu s \overline n_{i}$ and  $s^{-1} \nu s\in N_{0+}$ because $s$ is strictly $M$-positive. The condition a) means that for any $h\in \mathcal P$ and any $\nu \in \overline N_{0}$ we have $\overline {h\nu v_{i}} = hx$. As $h\in \mathcal P$ we have $\overline {h\nu v_{i}} = h \overline {\nu v_{i}}$ and the condition a) is equivalent to $\overline {\nu v_{i}}=x$  for all  $\nu \in \overline N_{0}$. Writing $v_{i}=\varphi(x)+w_{i}$, the   $\overline N (k)$-submodule $W$ of $V$ generated by $w_{i}$ is contained in the kernel of 
$v\mapsto \overline v$. If $W\neq 0$ then $W^{\overline N (k)}\neq 0$ and we get a contradiction. Hence $W=0$ and $v_{i}=\varphi(x)$.

5) We interpret now the condition b) which says that if $k$ does not belong to $\mathcal P \overline N_{0}$, then $\overline{ k \varphi (x)}=0$, and this for all $x\in V_{N(k)}$. Hence  the image of $ g V^{\overline N(k)}$ in $V_{N(k)}$  is $0$   for all $g$  not belonging to $P(k) \overline N (k)$. By Corollary \ref{basic}, this implies
$$P(k)\overline  P_{V}(k) \overline P(k) \subset P(k) \overline N(k)$$
hence the $M$-coregularity of $V$ by Corollary \ref{rregu}.
 \end{proof}
 
 This ends the proof of our main theorem (Theorem \ref{main}).
  
\begin{remark}
 {\rm When   $V$  has dimension $1$ and is given by a character $\epsilon$ of $K$,    the map $\Theta$ is not surjective because $V$ is not $M$-coregular as $\overline P_{V}=G \neq \overline P $. If there exists a character $\epsilon_{M}$  of $M(F)$ equal to $\epsilon $ on $M_{0}$ (such a character  $\epsilon_{M}$  does not always exist), one can consider the composite of $I_{0}$ with the surjective natural map
 $$\psi: \Ind_{P(F)}^{G(F)}(\ind_{M_{0}}^{M(F)}\epsilon ) \to \Ind_{P(F)}^{G(F)}\epsilon_{M} \ . $$
 In the case where $\epsilon $  extends to a character $\epsilon _{G}$ of $G(F)$, the image of $\psi \circ I_{0}$ is the subrepresentation  $\epsilon _{G}$ of dimension $1$ of $\Ind_{P(F)}^{G(F)}\epsilon_{M}$. The map   $\psi \circ \Theta $  is also non surjective.
 
 But in the case where $\epsilon $  does not extend to a character $\epsilon _{G}$ of $G(F)$, 
 the map $\psi \circ \Theta $ can be surjective.
 For example, $\psi \circ \Theta $ is surjective when  $ \Ind_{P(F)}^{G(F)}\epsilon_{M} $ is irreducible. This is the case,  for any  choice of $\epsilon_{M}$,  when $G=U(2,1)$ with respect to an unramified quadratic extension  of $F$, $B$ is a Borel subgroup and $K$ is a special non hyperspecial parahoric subgroup \cite{Ramla}; this is also the case when $G(F)=GL(2,D)$  with a quaternion skew field over $F$, $B$ is the upper triangular subgroup and $K=GL(2,O_{D})$  \cite{Tony}. 

}
\end{remark}   
  \section{Supersingular representations of $G(F)$}
 
We  introduce first  the notion of $K$-supersingularity for an irreducible smooth  representation $\pi$  of $G(F)$. Then we recall the notion of supercuspidality. We expect that supercuspidality is equivalent to $K$-supersingularity, at least for admissible representations.  We will give some partial results in this direction. Finally, when $\pi$ is admissible we  give an equivalent definition of $K$-supersingularity which coincides with the definition given by 
  Herzig and Abe when $G$ is $F$-split, $K$ is hyperspecial and the characteristic of $F$ is $0$.

\bigskip  Let $\pi$ be an irreducible  smooth $C$-representation   of $G(F)$. For any smooth irreducible $C$-representation $V$ of $K$, we consider   
$$
 \Hom_{G(F)}(\ind_{K}^{G(F)}V,\pi)
 $$
as a  right module for the Hecke algebra $\mathcal H (G(F),K,V)$.   

\begin{remark} {\rm The  representation $\pi|_{K}$  contains an irreducible subrepresentation $V$, i.e. by adjunction and the irreducibility of $\pi$,
$$
 \Hom_{G(F)}(\ind_{K}^{G(F)}V,\pi) \neq 0 \ , 
 $$
 because  a  non-zero element $v \in  \pi$ being fixed by an open subgroup of $K$,  generates a $K$-stable subspace   of finite dimension, and any finite dimensional smooth $C$-representation of $K$ contains an irreducible subrepresentation.   }
\end{remark}

We recall some elementary facts on localisation.

 Let $f:A\to B$ be an injective ring morphism which is a localisation at $b\in f(A)$ central and invertible in $B=f(A)[b^{-1}]$ (Def. \ref{defloc}). 

A right $B$-module $\mathcal V$  considered as a right $A$-module via $f$, is called the restriction of $\mathcal V$. An homomorphism $\varphi$ of right $B$-modules considered as an homomorphism of  right $A$-modules  is called the restriction of $\varphi$.

A right $A$-module $\mathcal V$ induces a right $B$-module 
$\mathcal V \otimes_{A, f} B$, 
 called the localisation of $\mathcal V$ at $b$.  An homomorphism $\varphi$ of right $A$-modules induces an homomorphism $\varphi \otimes \id$  of $B$-modules called the localisation of $\varphi$ at $b$.

 A right $A$-module  where the action of  $f^{-1}(b)$ is invertible is canonically a right  $B$-module and the homomorphisms $\Hom_{A}(\cal V,{\cal V}')$ and $\Hom_{B}({\cal V},{\cal V}')$ are the same for such $A$-modules ${\cal V}$ and ${\cal V}'$.
 
 \begin{lemma} The restriction     and the localisation  at $b$
are equivalence of categories, inverse  to each other, between the category of right  $B$-modules  and the category of
 right $A$-modules where the action of  $f^{-1}(b)$ is invertible.  
\end{lemma}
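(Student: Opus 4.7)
The plan is to unpack the definitions, verify that the two functors land in the stated categories, and then exhibit explicit natural isomorphisms witnessing that restriction and localisation are mutually inverse. The technical crux is to show, for any right $A$-module $M$ on which $c := f^{-1}(b)$ acts invertibly, that the canonical map $\eta_M : M \to M \otimes_{A,f} B$, $m \mapsto m \otimes 1$, is a bijection.

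First I would record that the functors go where claimed: a right $B$-module $\mathcal V$ restricted along $f$ gives a right $A$-module on which $c$ acts via the invertible element $b$, so $c$ acts invertibly; and conversely $M \otimes_{A,f} B$ is a right $B$-module, and $b$ acts invertibly on $B$ (hence on $M \otimes_{A,f} B$) by centrality and the hypothesis $B = f(A)[b^{-1}]$. The paragraph preceding the lemma already observes that $\Hom_A$ and $\Hom_B$ coincide on such modules, so once the underlying set-theoretic/abelian-group bijections are established, functoriality on morphisms is automatic.

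For the natural transformations: for a right $B$-module $\mathcal V$, take the counit $\epsilon_{\mathcal V} : \mathcal V \otimes_{A,f} B \to \mathcal V$, $v \otimes x \mapsto vx$, which is well-defined and $B$-linear; for a right $A$-module $M$ with $c$ acting invertibly, take the unit $\eta_M : M \to M \otimes_{A,f} B$, $m \mapsto m \otimes 1$. The composite $\eta_{\mathcal V} \circ \epsilon_{\mathcal V}^{-1}$ on $\mathcal V$ and $\epsilon_M \circ \eta_M$ on $M$ will be the identities, once we know these are isomorphisms. The key step is the normal form: every element of $B$ can be written as $f(a) b^{-n}$ for some $a \in A$ and $n \ge 0$, using that $B = f(A)[b^{-1}]$ together with the centrality of $b$ to shuffle the $b^{-1}$ factors to the right past elements of $f(A)$. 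Hence in $M \otimes_{A,f} B$, a sum $\sum m_i \otimes f(a_i) b^{-n_i}$ can be brought to a common denominator $b^{-n}$, giving $\left(\sum m_i a_i c^{\,n-n_i}\right) \otimes b^{-n}$; since $c$ acts invertibly on $M$, this equals $m' \otimes 1$ where $m' = \bigl(\sum m_i a_i c^{\,n-n_i}\bigr) \cdot c^{-n}$ (using $b^n \otimes b^{-n} = f(c^n) \otimes b^{-n}$ and sliding $c^n$ across the tensor).

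This simultaneously shows that $\eta_M$ is surjective and that $\epsilon_{\mathcal V}$ admits the inverse $v \mapsto v \otimes 1$. For injectivity of $\eta_M$ I would extend the $A$-action on $M$ to a $B$-action by defining $m \cdot (f(a) b^{-n}) := (m \cdot a) \cdot c^{-n}$, which is well-defined precisely because $b$ is central in $B$ (so the normal form $f(a)b^{-n}$ is essentially unique up to $a \sim a c^k$, $n \sim n+k$, and the formula is invariant under that relation). With this extension, $\epsilon_M : M \otimes_{A,f} B \to M$ is defined and $\epsilon_M \circ \eta_M = \id_M$, giving injectivity. The main obstacle is keeping the non-commutativity of $A$ and $B$ honest while manipulating $f(a) b^{-n}$; everything works because the only inverses we invert are those of the central element $b$, and the centrality is exactly what makes the normal form exist and the extended $B$-action well-defined.
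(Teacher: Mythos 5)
Your proof is correct; the paper's own proof of this lemma consists of the single word ``Clear,'' so your write-up simply supplies the standard verification the authors chose to omit, and it does so along the expected lines (normal form $f(a)b^{-n}$, surjectivity of $m\mapsto m\otimes 1$, and a retraction built from the extended $B$-action). One small point worth making explicit: the well-definedness of the extended action $m\cdot(f(a)b^{-n}):=(m\cdot a)\cdot c^{-n}$, and the check that it is multiplicative, use not only centrality of $b$ in $B$ but also the injectivity of $f$ (part of Definition \ref{defloc}), which forces $c=f^{-1}(b)$ to be central in $A$ and hence its action on $M$ to commute with the $A$-action.
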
 

\begin{proof}  Clear. 
  \end{proof}

We  consider now  the localisation   
$$\mathcal S  ' =\mathcal S  ' _{M,G} : \mathcal H (G(F),K,V) \to \mathcal H (M(F),M_{0},V_{N(k)})$$ 
 at $T_{M} $ (Proposition \ref{local}).

By Theorem \ref{main}, the localisation of the left $\mathcal H (G(F),K,V)$-module $ \ind_{K}^{G(F)}V$ at $T_{M}$ is isomorphic to $\Ind_{P(F)}^{G(F)} ( \ind_{M_{0}}^{M(F)}V_{N(k)})$ when $V$ is $M$-coregular.

\begin{definition} An irreducible  smooth $C$-representation $\pi$ of $G(F)$ is called $K$-supersingular when the localisations of the right $\mathcal H (G(F),K,V)$-module $$
 \Hom_{G(F)}(\ind_{K}^{G(F)}V,\pi)
 $$
 at $T_{M}$ are $0$, for all irreducible  smooth $C$-representations $V$ of $K$ and all standard Levi subgroup $M\neq G$.
\end{definition}
 
For a given $M$, the condition means that, for any non-zero  $f\in \Hom_{G(F)}(\ind_{K}^{G(F)}V,\pi)$ there exists $n\in \mathbb N$ such that ${ \mathcal S' }^{-1}(T_{M}^{n})(f)=0$. 
The condition does not depend on the choice of $T_{M}$, as it  is   equivalent to :
$$ \mathcal H(M(F),M _{0},V_{N(k)})\otimes_{\mathcal H(G(F),K,V), \mathcal S'} \Hom_{G(F)}( \ind_{K}^{G(F)}V\ , \ \pi) = 0 \ . 
$$

\begin{definition}  An irreducible smooth $C$-representation $\pi$ of $G(F)$ is called supercuspidal, if $\pi$ is not isomorphic to a subquotient of $\ind_{P(F)}^{G(F)}\tau$  for irreducible smooth $C$-representation $\tau$ of $M(F)$ where  $M\neq G$.
 \end{definition}

The definition  does not depend on the minimal parabolic $F$-subgroup $B$ of $G$ used to define the standard parabolic subgroups, as all such $B$'s  are conjugate in $G(F)$. 

 \bigskip Let $V$ be an irreducible smooth $C$-representation  of $K$ and let $\sigma$ be a smooth $C$-representation of $M(F)$ for some standard Levi subgroup $M\neq G$. Our first result concerns the $T_{M}$-localisation of the right $ \mathcal H (G(F),K,V)$-module
 $$\Hom_{G(F)}(\cind_{K}^{G(F)}V, \Ind_{ P(F)}^{G(F)} \sigma) \ . $$

 \begin{proposition}  \label{par} i. \   $ V\subset (\Ind_{P(F)}^{G(F)}\sigma)|_{K}$  if and only if  $V_{N(k)} \subset 
\sigma|_{M_{0}}$.

ii. \ In this case, the action of  $\mathcal S'^{-1}(T_{M})$ on  $\Hom_{G(F)}(\cind_{K}^{G(F)}V, \Ind_{ P(F)}^{G(F)} \sigma)$ is invertible.

\end{proposition}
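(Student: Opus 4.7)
My plan is to deduce both parts by pure Frobenius reciprocity, leveraging the machinery of Proposition \ref{I0}. The content is essentially a reinterpretation of the isomorphism $j$ for the specific module $\Ind_{P(F)}^{G(F)}\sigma$.

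For part (i), I would first use Frobenius reciprocity for compact induction to get
$$\Hom_{G(F)}(\cind_{K}^{G(F)}V, \Ind_{P(F)}^{G(F)}\sigma)\;\cong\;\Hom_{K}(V,\Ind_{P(F)}^{G(F)}\sigma).$$
The Iwasawa decomposition $G(F)=P(F)K$ together with the hypothesis $P(F)\cap K=M_{0}N_{0}$ identifies $(\Ind_{P(F)}^{G(F)}\sigma)|_{K}$ with $\Ind_{M_{0}N_{0}}^{K}(\sigma|_{M_{0}N_{0}})$, and a second application of Frobenius reciprocity reduces the Hom space to $\Hom_{M_{0}N_{0}}(V,\sigma)$. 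Since $N_{0}$ acts trivially on $\sigma$ (as $\sigma$ is inflated from $M(F)$), this equals $\Hom_{M_{0}}(V_{N_{0}},\sigma)=\Hom_{M_{0}}(V_{N(k)},\sigma)$, using $V|_{K_{+}}$ trivial. As $V$ is irreducible over $K$ and $V_{N(k)}$ is irreducible over $M_{0}$ (it factors through the irreducible $M(k)$-module $V_{N(k)}\simeq V^{\overline N(k)}$, cf.\ Corollary \ref{basic}), the non-vanishing of either side is equivalent to the respective embedding, yielding (i).

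For part (ii), I would apply Proposition \ref{I0}. Specialising the $M$-module $W=\sigma$ in the isomorphism of equation (\ref{calI}), one obtains a canonical $C$-linear bijection
$$j_{\sigma}\ :\ \Hom_{G(F)}(\cind_{K}^{G(F)}V,\Ind_{P(F)}^{G(F)}\sigma)\ \xrightarrow{\sim}\ \Hom_{M(F)}(\cind_{M_{0}}^{M(F)}V_{N(k)},\sigma).$$
By Proposition \ref{I0}(iii) this bijection intertwines the right action of $b\in\mathcal H(G(F),K,V)$ on the source with the right action of $\mathcal S'(b)\in\mathcal H(M(F),M_{0},V_{N(k)})$ on the target. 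Setting $b=\mathcal S'^{-1}(T_{M})$ (which is well defined because $T_{M}$ lies in the image of $\mathcal S'$ by Proposition \ref{local}), the action of $\mathcal S'^{-1}(T_{M})$ on the source corresponds via $j_{\sigma}$ to the action of $T_{M}$ on the target.

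It then suffices to observe that $T_{M}$ is central and invertible in $\mathcal H(M(F),M_{0},V_{N(k)})$ (recorded right after Definition \ref{TM}), so right-multiplication by $T_{M}$ is an automorphism of any right module, in particular of $\Hom_{M(F)}(\cind_{M_{0}}^{M(F)}V_{N(k)},\sigma)$. Transporting back via $j_{\sigma}^{-1}$ gives the invertibility of $\mathcal S'^{-1}(T_{M})$. I do not anticipate any real obstacle here: the only subtleties are verifying carefully that $N_{0}$ is in the kernel of the inflated action on $\sigma$ in (i), and that $T_{M}$ really does act by a well-defined element of the target Hecke algebra in (ii)—both of which are already handled by material earlier in the paper.
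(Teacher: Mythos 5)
Your proof is correct and follows the same route as the paper: part (i) is the Frobenius adjunction $\Hom_{K}(V,\Ind_{P_{0}}^{K}\sigma)\simeq\Hom_{M_{0}}(V_{N(k)},\sigma)$ combined with irreducibility of $V$ and of $V_{N(k)}$, and part (ii) is exactly the transport of the action of $\mathcal S'^{-1}(T_{M})$ through the isomorphism $j$ of Proposition \ref{I0}(iii) to the invertible action of $T_{M}$ on the target. The paper's proof is just a two-line citation of these facts; your write-up supplies the details it leaves implicit.
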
 
 
 \begin{proof}
i \ follows from the Frobenius  adjunction isomorphism 
 $$\Hom_{K}(V, \Ind_{ P_{0}}^{K}\sigma)  \  \to \ \Hom_{M_{0}}( V_{N(k)} , \sigma) \ . $$
ii follows from Proposition \ref{I0}.
   \end{proof}

 Our results on the comparison  between $K$-supersingular and supercuspidal irreducible smooth $C$-representations of $G(F)$ are :

\begin{theorem}   Let  $M\neq G$ be a    standard Levi $F$-subgroup and let $\tau$ be an
irreducible smooth $C$-representation  of $M(F)$.

i. \ An irreducible subrepresentation of $\Ind_{P(F)}^{G(F)}\tau$ is not $K$-supersingular.

ii. \ An admissible irreducible quotient of $\Ind_{P(F)}^{G(F)}\tau$ is not $K$-supersingular.

iii. \ An admissible irreducible smooth $C$-representation $\pi$ of $G(F)$ such that 
 the localisation of the  right $\mathcal H (G(F),K,V)$-module $$
 \Hom_{G(F)}(\ind_{K}^{G(F)}V,\pi)
 $$
 at $T_{M}$ is not  $0$
  for some $L$-coregular irreducible subrepresentation $V$ of $\pi|_{K}$ and some standard Levi subgroup $M\subset L\neq G $, is not supercuspidal.
\end{theorem}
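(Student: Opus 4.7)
For part (i) I would proceed directly via Proposition~\ref{par}. Choose an irreducible $K$-subrepresentation $V$ of $\pi|_K$; such $V$ exists because the smooth $K$-representation $\pi|_K$ has nonzero $K_+$-invariants, and any nonzero finite-dimensional $K/K_+$-representation contains an irreducible subrepresentation. Composing the inclusion $V \hookrightarrow \pi|_K$ with $\pi \hookrightarrow \Ind_{P(F)}^{G(F)}\tau$ gives $V \subset (\Ind_{P(F)}^{G(F)}\tau)|_K$, so by Proposition~\ref{par}(i) we have $V_{N(k)} \hookrightarrow \tau|_{M_{0}}$, and by (ii) the operator ${\mathcal{S}'}^{-1}(T_M)$ acts invertibly on $\Hom_{G(F)}(\ind_K^{G(F)}V,\Ind_{P(F)}^{G(F)}\tau)$. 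The inclusion $\pi \hookrightarrow \Ind_{P(F)}^{G(F)}\tau$ induces an $\mathcal{H}(G(F),K,V)$-linear injection
\begin{equation*}
\Hom_{G(F)}(\ind_K^{G(F)}V,\pi) \hookrightarrow \Hom_{G(F)}(\ind_K^{G(F)}V,\Ind_{P(F)}^{G(F)}\tau),
\end{equation*}
so ${\mathcal{S}'}^{-1}(T_M)$ acts injectively on the left, which is nonzero by Frobenius reciprocity. Its $T_M$-localisation is therefore nonzero and $\pi$ is not $K$-supersingular.

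For part (ii) the analogous argument breaks down since $\Hom_{G(F)}(\ind_K^{G(F)}V,-)$ is not right-exact in characteristic $p$ and a subrepresentation $V \subset \pi|_K$ need not lift to $(\Ind_{P(F)}^{G(F)}\tau)|_K$. My plan is to exploit admissibility via Emerton's ordinary part adjunction $\Hom_{G(F)}(\Ind_{P(F)}^{G(F)}\tau,\pi) \simeq \Hom_{M(F)}(\tau,\Ord_P\pi)$, which forces $\Ord_P\pi \ne 0$, and then to select an irreducible $V \subset \pi|_K$ whose coinvariants $V_{N(k)}$ map nonzero into an irreducible $M_0$-subquotient of $\Ord_P\pi$, so that a Proposition~\ref{par}-style localisation argument applies. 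Equivalently, a ``change of parabolic'' statement identifying $\pi$ as an admissible subrepresentation of $\Ind_{\overline{P}(F)}^{G(F)}\tau'$ for a suitable $\tau'$ would reduce (ii) to (i).

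Part (iii) is an application of Theorem~\ref{main} to the pair $(L,P_L)$ together with the transitivity $\mathcal{S}'_{M,G} = \mathcal{S}'_{M,L} \circ \mathcal{S}'_{L,G}$ (an instance of \eqref{trans} with $Z$ replaced by $M$). Non-vanishing of the $T_M$-localisation of $N := \Hom_{G(F)}(\ind_K^{G(F)}V,\pi)$ factors through the $T_L$-localisation, so the latter is nonzero. Admissibility ensures $N$ is finite-dimensional over $C$, and the Fitting decomposition for the $C$-linear operator $\mathcal{S}'_{L,G}{}^{-1}(T_L)$ on $N$ shows that both $N \otimes_{\mathcal{H}(G,K,V)} \mathcal{H}(L,L_0,V_{N_L(k)})$ and $\Hom_{\mathcal{H}(G,K,V)}(\mathcal{H}(L,L_0,V_{N_L(k)}),N)$ coincide with the $T_L$-invertible part of $N$. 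By Theorem~\ref{main} applied to $L$ (using $L$-coregularity of $V$) together with the tensor-hom adjunction, this common space is canonically
\begin{equation*}
\Hom_{G(F)}(\Ind_{P_L(F)}^{G(F)}(\ind_{L_0}^{L(F)}V_{N_L(k)}),\pi),
\end{equation*}
and is therefore nonzero. Irreducibility of $\pi$ then exhibits $\pi$ as a quotient of $\Ind_{P_L(F)}^{G(F)}\tau''$ for $\tau'' = \ind_{L_0}^{L(F)}V_{N_L(k)}$; passing to an irreducible subquotient $\tau'$ of $\tau''$ such that $\pi$ remains a subquotient of $\Ind_{P_L(F)}^{G(F)}\tau'$ (using exactness of parabolic induction) shows $\pi$ is not supercuspidal, since $L \neq G$.

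The main obstacle I anticipate is part (ii), which genuinely requires an external input (Emerton's adjunction or an equivalent change-of-parabolic result) not otherwise used in the paper. The most delicate point in part (iii) is the coincidence of the $T_L$-localisation of $\Hom$ with the $\Hom$ out of the $T_L$-localisation; this identification is specific to the finite-dimensional setting afforded by admissibility and fails for general smooth $\pi$.
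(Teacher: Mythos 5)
Your part (i) is correct and is essentially the paper's own argument: an irreducible $V\subset\pi|_{K}$ also embeds in $(\Ind_{P(F)}^{G(F)}\tau)|_{K}$, Proposition \ref{par} makes ${\mathcal S'}^{-1}(T_{M})$ invertible on the ambient Hom space, hence injective on the nonzero submodule $\Hom_{G(F)}(\ind_{K}^{G(F)}V,\pi)$, whose $T_{M}$-localisation is therefore nonzero. Parts (ii) and (iii), however, each contain a genuine gap.

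For (ii) you offer only a plan, and it points away from what is actually needed. The paper does not use Emerton's functor here: it chooses an irreducible $M_{0}$-representation $W$ with $\tau$ a quotient of $\ind_{M_{0}}^{M(F)}W$, takes the unique $M$-coregular irreducible $V$ of $G(k)$ with $V_{N(k)}\simeq W$ (Proposition \ref{wi}), and applies Theorem \ref{main} to identify $\Ind_{P(F)}^{G(F)}(\ind_{M_{0}}^{M(F)}W)$ with the $T_{M}$-localisation of $\ind_{K}^{G(F)}V$; then $\Hom_{G(F)}$ of that localisation into $\pi$ is nonzero, and the elementary claim of Remark \ref{cclaim} (essentially your Fitting-decomposition observation, valid because admissibility makes $\Hom_{G(F)}(\ind_{K}^{G(F)}V,\pi)$ finite-dimensional) moves the localisation inside the Hom. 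Your route via $\Ord_{\overline P}\pi\neq 0$ does not visibly produce a pair $(V,M)$ with nonvanishing localised Hom, so (ii) is unproved as written.

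For (iii) your argument parallels the paper up to the nonzero map $\Ind_{Q(F)}^{G(F)}\tau''\to\pi$ with $\tau''=\ind_{L_{0}}^{L(F)}V_{N'(k)}$, but the closing step --- passing to an irreducible subquotient $\tau'$ of $\tau''$ so that $\pi$ stays a subquotient of $\Ind_{Q(F)}^{G(F)}\tau'$ --- is not justified: $\tau''$ has infinite length in general, so exactness of parabolic induction gives no Jordan--H\"older control, and an irreducible quotient of $\Ind_{Q(F)}^{G(F)}\tau''$ need not be a subquotient of $\Ind_{Q(F)}^{G(F)}\tau'$ for any irreducible subquotient $\tau'$. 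This reduction is precisely the paper's Proposition \ref{tau}, and it is here (not in (ii)) that Emerton's ordinary parts enter, via the adjunction, the admissibility of the ordinary part, and the existence of an irreducible admissible subrepresentation. Moreover Proposition \ref{tau} requires the inducing representation to have a central character; the paper secures this by first replacing $\Hom_{G(F)}(\ind_{K}^{G(F)}V,\pi)$ with a simple $\mathcal H(G(F),K,V)$-submodule $\mathcal N$ (available by admissibility) and inducing $\sigma=\mathcal N\otimes_{\mathcal H(L(F),L_{0},V_{N'(k)})}\ind_{L_{0}}^{L(F)}V_{N'(k)}$ rather than $\tau''$ itself. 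Without that device your $\tau''$ has no central character, so even invoking Proposition \ref{tau} would not close the gap.
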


\begin{proof} 

i. \  The last proposition implies that an irreducible subrepresentation of $\Ind_{P(F)}^{G(F)}\tau$  is not $K$-supersingular. 

ii. \  Let $\pi$ be  an irreducible quotient  of $\Ind_{P(F)}^{G(F)}\tau$. We choose 
 an irreducible smooth $C$-representation  $W$ of $ M_{0}$ such that the  irreducible representation $\tau$ is a quotient of  $\ind_{M_{0}}^{M(F)} W$. Then  $\pi$  is a quotient of 
$\Ind_{P(F)}^{G(F)}( \ind_{M_{0}}^{M(F)} W)$.
 We consider  the  unique  irreducible $M$-coregular representation $V$ of $G(k)$ such that $V_{N(k)}\simeq W$ (Proposition \ref{wi}). By our main theorem
(Theorem \ref{main}):
$$
   \Ind_{P(F)}^{G(F)}( \ind _{M(F) \cap K}^{M(F)} W)  \simeq \mathcal H(M(F),M _{0},V_{N(k)})\otimes_{\mathcal H(G(F),K,V), \mathcal S'}  \ind_{K}^{G(F)}V  \ .
 $$
 we deduce:
$$\Hom_{G(F)}( \mathcal H(M(F),M _{0},V_{N(k)})\otimes_{\mathcal H(G(F),K,V), \mathcal S'}  \ind_{K}^{G(F)}V\ , \ \pi) \neq 0 \ . 
$$
Claim: If $\pi$ is admissible, this implies
$$ \mathcal H (M(F),M_{0},V_{N(k)})\otimes_{\mathcal H (G(F),K,V), \mathcal S'} \Hom_{G(F)}(\ind_{K}^{G(F)}V,\pi) \neq 0 \ . 
$$
Hence  $\pi$  is not $K$-supersingular. The claim  follows from elementary algebra and will be proved  later \ref{cclaim}.

\bigskip iii. \   The localisation of $\Hom_{G(F)}(\ind_{K}^{G(F)}V,\pi)$ at $T_{L}$ is not $0$ because the localisation of $\Hom_{G(F)}(\ind_{K}^{G(F)}V,\pi)$ at $T_{M}$ is not $0$,  by transitivity of the localisation:  the localisation at $T_{M}$ is equal to the localisation at $T_{M}$  of the localisation at $T_{L} $. Equivalently
$$ \mathcal H _{ L,V,\pi}:= \mathcal H (L(F),L_{0},V_{N'(k)})\otimes_{\mathcal H (G(F),K,V), \mathcal S'_{L,G}} \Hom_{G(F)}(\ind_{K}^{G(F)}V,\pi) $$
is not $0$ because   $\mathcal H _{ M,V,\pi}\neq 0$. This follows from the transitivity relation
$$\mathcal H _{ M,V,\pi}=   \mathcal H (M(F),M_{0},V_{N(k)})\otimes_{\mathcal H (L(F),L_{0},V_{N'(k)}), \mathcal S'_{M,L}}\mathcal H _{ M,V,\pi}$$
which is deduced from the transitivity  $ \mathcal S'_{M,G}=   \mathcal S'_{M,L} \circ \mathcal S'_{L,G}$.
  
The non-zero space $$
 \Hom_{G(F)}(\ind_{K}^{G(F)}V,\pi)
 $$
 contains a simple right $\mathcal H (G(F),K,V)$-submodule $\mathcal N$ because $\pi$ is admissible. 

 The irreducible representation $\pi$ is a quotient of  
 \begin{equation}\label{calN} \mathcal N\otimes_{\mathcal H (G(F),K,V)}\ind_{K}^{G(F)}V \ 
 \end{equation}
 
  As $V$ is $L$-coregular,  $\mathcal N$ is the restriction of a simple
$\mathcal H (L(F),L_{0},V_{N' (k)})$-module,  still denoted by $\mathcal N$, and 
the representation (\ref{calN})  is isomorphic to 
 \begin{equation}\label{calNQ} \mathcal N\otimes_{\mathcal H (L(F),L_{0},V_{N'(k)})}\Ind_{Q(F)}^{G(F)}(\ind_{L_{0}}^{L(F)}V_{N'(k)}) \  
 \end{equation}
 by Theorem \ref{main}.
  This  last representation is isomorphic to $\Ind_{Q(F)}^{G(F)}\sigma$ where 
  \begin{equation}\label{calNL}\sigma:= \mathcal N\otimes_{\mathcal H (L(F),L_{0},V_{N'(k)})} \ind_{L_{0}}^{L(F)}V_{N'(k)} \ .
 \end{equation}
is a smooth representation   of $L(F)$. 
 The center of $L(F)$ embeds naturally  in the center  of the Hecke algebra $\mathcal H (L(F),L_{0},V_{N' (k)})$ and acts by a character on the simple
$\mathcal H (L(F),L_{0},V_{N' (k)})$-module $\mathcal N$ \cite{VigD}. Hence $\sigma$  has a central character. 

The admissible irreducible representation $\pi$ is a quotient of  $\Ind_{Q(F)}^{G(F)}\sigma$  where $\sigma$  has a central character.  By  Proposition \ref{tau} below, $\pi$ is a quotient of $\Ind_{Q(F)}^{G(F)}\tau$ for an admissible irreducible  smooth $C$-representation  $\tau$ of $L(F)$. 
 As $Q\neq G$, 
the representation $\pi$  is  not supercuspidal.

 \end{proof}
 
 \begin{remark} \label{cclaim} Proof of the claim.
 \end{remark}
  
  \begin{proof}  We denote $A=\mathcal H(G(F),K,V), T=T_{M}\in A, B=A[T^{-1}], X= \ind_{K}^{G(F)}V $. We suppose 
 $$\Hom_{G}(B \otimes_{A}X, \pi) \neq 0 \ , $$
 and we want to prove that   $B \otimes_{A} \Hom_{G}(X, \pi)\neq 0$ provided that $ \Hom_{G}(X, \pi)$ is finite dimensional (which is the case if $\pi$ is admissible).
 
 We consider the natural linear map 
 $$r:\Hom_{G}(B \otimes_{A}X, \pi)\to \Hom_{G}( X, \pi) \ \ , \ \ \varphi \mapsto (x\mapsto \varphi (1\otimes x)) \ . $$
 The space $\Hom_{G}(B \otimes_{A}X, \pi)$ is naturally a right $B$-module hence a right $A$-module by restriction. The map $r$ is $A$-linear :
 $$r(\varphi a) (x) = ( \varphi a )(1\otimes x) = \varphi (a\otimes x)= \varphi (1\otimes ax)= r(\varphi )(ax) =( r(\varphi )a)(x) \ ,$$
 for $a\in A, x\in X, \varphi\in \Hom_{G}(B \otimes_{A}X, \pi).$
 Consequently, the image $\im (r)$ is an $A$-submodule of  $\Hom_{G}( X, \pi) $. We  remark that  $T\im (r)=\im (r) $ because $r(\varphi)=r(\varphi T^{-1})T$
 for $\varphi \in \Hom_{G}(B \otimes_{A}X, \pi)$.

We show  now that our hypothesis implies that $\im (r)$ is not $0$. Indeed, let $\varphi \neq 0$ in $\Hom_{G}(B \otimes_{A}X, \pi)$. There exists $b\in B$ and $x\in X$ such that $\varphi (b\otimes x)\neq 0$. Writing $b= T^{-n}a$ with $n \in \mathbb N$ and $a\in A$ we get
 $\varphi (T^{-n}a\otimes x)= \varphi T^{-n}(1\otimes ax)\neq 0$ so that $r(\varphi T^{-n})\neq 0$.  
  
  We assume now that $ \Hom_{G}(X, \pi)$ is finite dimensional. Then $\im (r) $ is also finite dimensional then $T$ induces an automorphism of $\im (r)$ so that 
  $B\otimes_{A}\im (r)\neq 0$. The localisation being an exact functor,  $B \otimes_{A} \Hom_{G}(X, \pi)\neq 0$.
  
  \end{proof}
 
  \begin{proposition} \label{tau} Let $\pi$ be an admissible irreducible  smooth $C$-representation of $G(F)$ which is a quotient of $\Ind_{P(F)}^{G(F)} \sigma$ for a smooth $C$-representation $\sigma$ of $M(F)$ with a central character.
  Then there exists an admissible irreducible  smooth $C$-representation  $\tau$ of $M(F)$
such that   $\pi$ is a quotient of $\Ind_{P(F)}^{G(F)}\tau$.
  \end{proposition}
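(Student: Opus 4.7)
The plan is to reduce, by parabolic Frobenius reciprocity, the production of $\tau$ to finding an irreducible admissible $M(F)$-subrepresentation of $\pi^{N(F)}$. Since $P(F)\backslash G(F)$ is compact by the Iwasawa decomposition $G(F)=P(F)K$, full and compact induction coincide, so the usual adjunction gives
$$\Hom_{G(F)}(\Ind_{P(F)}^{G(F)}\sigma,\pi)\ =\ \Hom_{P(F)}(\sigma,\pi)\ =\ \Hom_{M(F)}(\sigma,\pi^{N(F)}),$$
the last equality holding because $N(F)$ acts trivially on $\sigma$. The given surjection corresponds to a non-zero $M(F)$-map $\alpha\colon\sigma\to\pi^{N(F)}$, and conversely any non-zero $M(F)$-map from an irreducible admissible $\tau$ into $\pi^{N(F)}$ adjoins to a non-zero, hence surjective (by irreducibility of $\pi$), $G(F)$-map $\Ind_{P(F)}^{G(F)}\tau\to\pi$. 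Thus it suffices to exhibit such a $\tau$ inside $\pi^{N(F)}$.

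Next I would construct a finitely generated admissible cyclic subrepresentation of $\pi^{N(F)}$. Pick any non-zero $v\in\alpha(\sigma)$ and set $\tau_{0}:=C[M(F)]\cdot v\subseteq\pi^{N(F)}$; this is finitely generated and inherits the central character $\chi$ of $\sigma$. The key observation is that $\tau_{0}$ is \emph{admissible}. For any open compact $H_{M}\subseteq M(F)$, a standard finite-intersection trick provides an open compact $N_{0}\subseteq N(F)$ normalized by $H_{M}$ (the collection of $H_{M}$-conjugates of an initial compact open subgroup of $N(F)$ is finite); then $N_{0}H_{M}$ is an open compact subgroup of $G(F)$, and
$$\tau_{0}^{H_{M}}\ \subseteq\ \pi^{\langle N(F),H_{M}\rangle}\ \subseteq\ \pi^{N_{0}H_{M}}$$
is finite-dimensional by the admissibility of $\pi$.

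Then I would extract an irreducible admissible $\tau\subseteq\tau_{0}$ using Hecke-algebra methods. Fix a pro-$p$ open compact subgroup $I\subseteq M(F)$, for instance the pro-$p$ Iwahori. Admissibility makes $\tau_{0}^{I}$ a finite-dimensional right module over $\mathcal{H}(M(F),I)$; pick any simple submodule $\mathcal{N}\subseteq\tau_{0}^{I}$, and let $\tau:=C[M(F)]\cdot\mathcal{N}\subseteq\tau_{0}$. By Vignéras's equivalence of categories (\cite{VigD}) between smooth $M(F)$-representations generated by their $I$-invariants and $\mathcal{H}(M(F),I)$-modules, $\tau$ is irreducible in that subcategory, and as a subrepresentation of the admissible $\tau_{0}$ it is admissible. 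Composing with the first paragraph, the inclusion $\tau\hookrightarrow\pi^{N(F)}$ yields the required surjection $\Ind_{P(F)}^{G(F)}\tau\twoheadrightarrow\pi$.

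The main obstacle is the last step: one must ensure that a simple $\mathcal{H}(M(F),I)$-submodule of $\tau_{0}^{I}$ produces a genuinely irreducible smooth subrepresentation of $\tau_{0}$, not merely an object irreducible in the subcategory of $I$-generated representations. This is where the pro-$p$ nature of $I$ — so that every non-zero smooth subrepresentation of $\tau_{0}$ has non-zero $I$-invariants — together with the admissibility of $\tau_{0}$ and the central character hypothesis come together, via the results of \cite{VigD}, to identify the abstract irreducible subrepresentations of $\tau_{0}$ with the simple submodules of $\tau_{0}^{I}$. Granted this, the proposition follows at once.
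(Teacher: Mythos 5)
There is a fatal gap at the very first step. The displayed identification
$$\Hom_{G(F)}(\Ind_{P(F)}^{G(F)}\sigma,\pi)\ =\ \Hom_{P(F)}(\sigma,\pi)$$
is not ``the usual adjunction''. Compact induction is left adjoint to restriction only for \emph{open} subgroups; $P(F)$ is closed but not open in $G(F)$. The compactness of $P(F)\backslash G(F)$ does give $\Ind_{P(F)}^{G(F)}=\ind_{P(F)}^{G(F)}$, but it does not make this functor a left adjoint of restriction: there is no $P(F)$-equivariant map $\sigma\to\Ind_{P(F)}^{G(F)}\sigma$ sending $v$ to a function supported on $P(F)$ (such a function is not smooth), and in characteristic $p$ there is no invariant integration over $P(F)\backslash G(F)$ to produce a map in the other direction. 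Frobenius reciprocity computes $\Hom_{G(F)}(\pi,\Ind_{P(F)}^{G(F)}\sigma)$, not $\Hom_{G(F)}(\Ind_{P(F)}^{G(F)}\sigma,\pi)$. Concretely, the conclusion you draw — that every irreducible quotient of $\Ind_{P(F)}^{G(F)}\sigma$ has non-zero $N(F)$-invariants — is false: for an irreducible principal series $\pi=\Ind_{B(F)}^{G(F)}\chi$ of $GL_2$ one checks directly that $\pi^{U(F)}=0$, while $\Hom_{G(F)}(\Ind_{B(F)}^{G(F)}\chi,\pi)\neq 0$. So the object $\pi^{N(F)}$ in which you propose to find $\tau$ is typically zero, and the whole construction collapses.

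The correct substitute for this missing adjunction is exactly what the paper uses: Emerton's ordinary parts functor, which gives $\Hom_{G(F)}(\Ind_{P(F)}^{G(F)}\sigma,\pi)\simeq\Hom_{M(F)}(\sigma,\operatorname{Ord}_{\overline P}\pi)$ for $\pi$ admissible and $\sigma$ locally $Z_M$-finite (this is where the central-character hypothesis on $\sigma$ enters — your proposal never actually uses it for the adjunction). One then shows $\operatorname{Ord}_{\overline P}\pi$ is admissible and non-zero, extracts an admissible irreducible subrepresentation $\tau$ (the paper does this by taking a subrepresentation $\pi_1$ for which the $\mathcal H(M(F),H,\id)$-module $\pi_1^{H}$ has minimal length, $H$ a pro-$p$ open subgroup, and letting $\tau$ be generated by $\pi_1^{H}$), and applies the adjunction again. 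Your final step has a secondary weakness of the same flavour: generating a representation by a simple $\mathcal H(M(F),I)$-submodule of the $I$-invariants does not in general yield an irreducible representation, since $\pi\mapsto\pi^{I}$ is not an equivalence of categories in characteristic $p$; the minimal-length argument avoids this. But the essential missing idea is the replacement of the false second adjunction by $\operatorname{Ord}_{\overline P}$.
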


  When the characteristic of $F$ is $0$, Herzig  (\cite{Her}  Lemma 9.9) proved this proposition using the  $ \overline P$-ordinary functor ${\rm Ord}_{\overline P}$ introduced by Emerton \cite{Emerton}.   His proof contains  four steps:
 
 1.  As $\sigma$ is locally $Z_{M}$-finite, we have
  $$\Hom(\Ind_{P(F)}^{G(F)}\sigma, \pi) \simeq \Hom_{ M(F)}(\sigma,  {\rm Ord}_{\overline P}\pi) \ . $$
 
 2.  As $\pi$ is admissible, ${\rm Ord}_{\overline P}\pi$ is admissible.
 
3. As ${\rm Ord}_{\overline P}\pi$ is admissible and non-zero, it contains an admissible irreducible  subrepresentation $\tau$.

4. As ${\rm Ord}_{\overline P}$ is the right adjoint of  $\Ind_{P(F)}^{G(F)}$ in the category of admissible representations, we obtain that $\pi$ is a quotient of $\Ind_{P(F)}^{G(F)}\tau$.

\bigskip The proof is valid without hypothesis on the characteristic of $F$ : we checked carefully  
   that the Emerton's proof of   the steps 1, 2, 4 never uses the characteristic of $F$. Only the proof of step 3 given by Herzig has to be replaced by a characteristic-free proof.   
 
  \begin{lemma}
  An admissible smooth $C$-representation   of $G(F)$ contains an admissible irreducible  subrepresentation.
\end{lemma}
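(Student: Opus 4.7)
The plan is to produce an admissible irreducible subrepresentation of $\pi$ by a minimality argument, exploiting a characteristic-$p$ feature of smooth representations. First, I would fix an open compact pro-$p$ subgroup $H$ of $G(F)$ --- for instance, the pro-$p$ radical $K_{+}$ of the special parahoric $K$ appearing in Section~4. The crucial input is the standard fact that, because $C$ has characteristic $p$ and $H$ is pro-$p$, every nonzero smooth $C$-representation $\sigma$ of $G(F)$ satisfies $\sigma^{H} \neq 0$: on the $H$-orbit of any nonzero smooth vector, $H$ acts through a finite $p$-group, and a finite $p$-group always fixes a nonzero vector in any nonzero $C$-vector space.

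Granting this, $\pi^{H}$ is a nonzero finite-dimensional $C$-vector space. Let $d$ be the minimum of $\dim_{C}\sigma^{H}$ as $\sigma$ runs over all nonzero subrepresentations of $\pi$; this minimum exists because the set of such dimensions is a nonempty set of positive integers bounded above by $\dim_{C}\pi^{H}$. Choose any nonzero $\sigma_{0} \subseteq \pi$ realizing this minimum. Inside $\sigma_{0}$, I would consider the family $\mathcal{F}$ of subrepresentations $\tau \subseteq \sigma_{0}$ with $\tau^{H} = \sigma_{0}^{H}$. For any descending chain $\{\tau_{i}\}$ in $\mathcal{F}$, the intersection $\bigcap_{i}\tau_{i}$ again belongs to $\mathcal{F}$, since $(\bigcap_{i}\tau_{i})^{H} = \bigcap_{i}\tau_{i}^{H} = \sigma_{0}^{H}$. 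Zorn's lemma therefore provides a minimal element $\tau_{0} \in \mathcal{F}$.

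To conclude, I would verify that $\tau_{0}$ is irreducible. Let $\tau' \subseteq \tau_{0}$ be any nonzero subrepresentation. The key property forces $\tau'^{H} \neq 0$, and since $\tau'$ is a nonzero subrepresentation of $\pi$, one has $\dim_{C}\tau'^{H} \geq d$. But $\tau'^{H} \subseteq \tau_{0}^{H} = \sigma_{0}^{H}$, a space of dimension exactly $d$, so $\tau'^{H} = \sigma_{0}^{H}$ and $\tau' \in \mathcal{F}$; minimality of $\tau_{0}$ then gives $\tau' = \tau_{0}$. Admissibility of $\tau_{0}$ is automatic as a subrepresentation of the admissible $\pi$, and so $\tau_{0}$ is the desired admissible irreducible subrepresentation.

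The step I expect to be the heart of the matter --- and the reason the argument works in this setting --- is the use of a pro-$p$ subgroup to guarantee that $\sigma^{H} \neq 0$ for \emph{every} nonzero smooth subrepresentation. Without this, a proper nonzero subrepresentation $\tau' \subsetneq \tau_{0}$ could be invisible to the functor $(-)^{H}$, and the dimension-counting minimality argument would fail to yield irreducibility. It is precisely this characteristic-$p$ phenomenon that converts the finite-dimensional space $\pi^{H}$ into an effective tool for locating irreducible subobjects of $\pi$, replacing the Emerton-style ordinary-parts argument used by Herzig in the characteristic-zero case.
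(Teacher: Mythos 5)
Your proof is correct and follows essentially the same strategy as the paper: both rest on the characteristic-$p$ fact that a nonzero smooth representation has nonzero invariants under an open pro-$p$ subgroup $H$, combined with a minimality argument on $(-)^{H}$ inside the finite-dimensional space $\pi^{H}$. The paper minimizes the length of $\pi_1^{H}$ as a Hecke module and takes the subrepresentation generated by it (which is automatically the minimal element your Zorn argument produces), but this is only a cosmetic difference.
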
 

\begin{proof}
For any admissible smooth $C$-representation   of $G(F)$, the dimension of $\pi^{H}$ is a positive finite integer for any open pro-$p$-sugroup $H$. 
In a subrepresentation   $\pi_{1}$  of $\pi$   such that the right $\mathcal H (G(F), H, \id)$-module $\pi_{1}^{H}$  has minimal length, the subrepresentation generated by $\pi_{1}^{H}$ is    irreducible. 
 \end{proof}

This ends the proof of Proposition \ref{tau} hence of the theorem.

\begin{remark}\label{adm}  {\rm  

   When $\pi$ is an admissible smooth $C$-representation of $G$, then 
 $$
 \Hom_{G(F)}(\ind_{K}^{G(F)}V,\pi)
 $$
is finite dimensional hence it is $0$ or contains a simple  $\mathcal H (G(F),K,V)$-module.
 
  An irreducible smooth $C$-representation $\pi$ of $G(F)$  such that $
 \Hom_{G(F)}(\ind_{K}^{G(F)}V,\pi)
 $
contains a simple  $\mathcal H (G(F),K,V)$-module $\mathcal N$, has a central character.
This follows from:

1.  The center of 
 $\mathcal H (G(F),K,V)$ acts on $\mathcal N$ by a character \cite{VigD}.r
 
 2.   $\pi$ is quotient of 
$\mathcal N\otimes_{\mathcal H (G(F),K,V)}\ind_{K}^{G(F)}V$.  
}
 \end{remark}

 We want now  to show that  the $K$-supersingularity  of an admissible irreducible  representation of $G(F)$ can   also be defined using  the characters of the center   $\mathcal Z (G(F),K,V)$ of $\mathcal H (G(F),K,V)$ appearing in  $\Hom_{G(F)}(\ind_{K}^{G(F)}V,\pi)$.

We  consider the localisation   
$$ 
  \mathcal Z (G(F),K,V) \to \mathcal Z (M(F),M_{0},V_{N(k)}) \ .
$$ 
 at $T_{M} $  obtained by restriction to the centers  of the localisation 
 $\mathcal S  '  $ 
 at $T_{M} $  (Proposition \ref{local}).  
 
\begin{proposition}  Let $\pi$ be an admissible irreducible smooth $C$-representation   of $G(F)$. The following properties are equivalent:  

i. \ $ \pi$ is $K$-supersingular,

ii. \  The localisation at $T_{M}$ of any simple   $\mathcal H (G(F),K,V)$-submodule of 
 $$
 \Hom_{G(F)}(\ind_{K}^{G(F)}V,\pi)
 $$   is $0$,  for all standard Levi subgroups   $M\neq G$.

iii. \   The localisation at $T_{M}$ of any  character of $\mathcal Z (G(F),K,V)$ contained in 
$$
 \Hom_{G(F)}(\ind_{K}^{G(F)}V,\pi)
 $$  
  is $0$,  for all standard Levi subgroups   $M\neq G$.
\end{proposition}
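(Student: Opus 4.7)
The plan is to prove the chain $(i)\Leftrightarrow(ii)\Leftrightarrow(iii)$ by combining Proposition \ref{local} with two elementary ingredients: admissibility, which makes $\mathcal{N}:=\Hom_{G(F)}(\ind_K^{G(F)}V,\pi)$ finite-dimensional over $C$ and hence of finite length as a right module over $A:=\mathcal{H}(G(F),K,V)$; and the centrality of $\tau_M:={\mathcal{S}'}^{-1}(T_M)$ inside $A$. The latter is immediate from Proposition \ref{local}: since $\mathcal{S}'$ is injective and $T_M$ is central in $\mathcal{H}(M(F),M_0,V_{N(k)})$, applying $\mathcal{S}'$ to $a\tau_M$ and $\tau_M a$ yields the same element, so $\tau_M$ commutes with every $a\in A$. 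Restricting $\mathcal{S}'$ to centres then realises $\mathcal{Z}(G(F),K,V)\to\mathcal{Z}(M(F),M_0,V_{N(k)})$ as the ring localisation at $T_M$, as the text already notes.

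For $(ii)\Leftrightarrow(iii)$ I would use that on any simple right $A$-module $\mathcal{N}_0$ the centre $\mathcal{Z}(A)$ acts by a character $\chi$ (by \cite{VigD}, recalled in Remark \ref{adm}); in particular $\tau_M$ acts as the scalar $\chi(\tau_M)$. The localisation $\mathcal{N}_0\otimes_A\mathcal{H}(M(F),M_0,V_{N(k)})$ then vanishes precisely when $\chi(\tau_M)=0$, which, since the central extension itself is the ring localisation at $T_M$, is exactly the failure of $\chi$ to extend through $\mathcal{S}'$ to a character of $\mathcal{Z}(M(F),M_0,V_{N(k)})$. Specialising to simple submodules of $\mathcal{N}$ yields the desired equivalence.

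For $(i)\Leftrightarrow(ii)$ the forward direction is immediate from exactness of the central Ore localisation: any submodule of $\mathcal{N}$ localises into $\mathcal{N}_{T_M}=0$. The converse is the substantive direction, and here admissibility and centrality cooperate. Since $\tau_M$ lies in $\mathcal{Z}(A)$, the centre acts on the finite-dimensional $\mathcal{N}$ and I can decompose $\mathcal{N}=\bigoplus_\chi\mathcal{N}^\chi_{\mathrm{gen}}$ into generalised eigenspaces, each $A$-stable because the centre commutes with all of $A$. Every non-zero $\mathcal{N}^\chi_{\mathrm{gen}}$ has a non-zero socle, which is a direct sum of simple $A$-submodules of $\mathcal{N}$, all with central character $\chi$; by hypothesis $(ii)$ these simple submodules localise to zero, forcing $\chi(\tau_M)=0$ by the argument of the previous paragraph. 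Hence $\tau_M$ acts nilpotently on each $\mathcal{N}^\chi_{\mathrm{gen}}$, therefore on $\mathcal{N}$, so $\mathcal{N}$ is $T_M$-torsion and $\mathcal{N}_{T_M}=0$, which is $(i)$.

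The main obstacle I anticipate is precisely the passage in $(ii)\Rightarrow(i)$ from ``every simple submodule of $\mathcal{N}$'' to ``every composition factor of $\mathcal{N}$'': a priori the deep Jordan--H\"older layers give simple subquotients rather than submodules, and hypothesis $(ii)$ only controls the latter. The generalised central-character decomposition above is exactly what bridges the gap, since in each block the socle, a bona fide submodule of $\mathcal{N}$, shares its central character with every composition factor of that block.
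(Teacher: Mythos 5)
Your proposal is correct and follows essentially the same route as the paper: exactness of localisation gives the forward implications, the identification of the localisation of a simple module with the vanishing of $\chi(\tau_M)$ gives (ii)$\Leftrightarrow$(iii), and the generalised central-character (block) decomposition of the finite-dimensional space $\Hom_{G(F)}(\ind_{K}^{G(F)}V,\pi)$ is exactly the device the paper uses to pass from simple subquotients back to simple submodules. The only cosmetic difference is that you phrase this last step via the socle of each block, while the paper phrases it as "each character appearing as a subquotient also embeds"; the content is identical.
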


\begin{proof}
We suppose first $\pi$ only   irreducible  and we denote $H_{V} :=\Hom_{G(F)}(\ind_{K}^{G(F)}V,\pi)$ for simplicity;
we suppose   $H_{V} \neq 0$.  

We note that the localisation of $H_{V}$ at $T_{M}$ as a 
 $\mathcal H (G(F),K,V)$-module,
and as  a $\mathcal Z (G(F),K,V)$-module,  are isomorphic $\mathcal Z (M(F),M_{0},V_{N(k)})$-modules. 

The localisation at $T_{M}$ is an exact functor hence if the  localisation of $H_{V}$ at $T_{M}$ is $0$, the same is true for the simple $\mathcal H (G(F),K,V)$-submodules of $H_{V}$ and the characters of $\mathcal Z (G(F),K,V)$ contained in $H_{V}$.

We suppose now  $\pi$  admissible. Then $H_{V}$ is finite dimensional and admits a finite Jordan-H\"older filtration as a
 $\mathcal H (G(F),K,V)$-module (or as a $\mathcal Z (G(F),K,V)$-module).
 
 The  localisation of $H_{V}$ at $T_{M}$ is not $0$ if and only if  the localisation at $T_{M}$ of one of the simple quotients of $H_{V}$  as a
 $\mathcal H (G(F),K,V)$-module (or as a $\mathcal H (G(F),K,V)$-module) is not $0$.
 
 Each character of $\mathcal Z (G(F),K,V)$ appearing as a subquotient of $H_{V}$ also embeds in $H_{V}$ because $\mathcal Z (G(F),K,V)$ is a finitely generated commutative algebra over the algebraically closed field $C$. The finite dimensional space $H_{V}$  is the direct sum  of its generalized eigenspaces $(H_{V})_{\chi}$ with eigenvalue an algebra homomorphism $\chi:\mathcal Z (G(F),K,V)\to C$.
 
Hence the localisation of $H_{V}$ at $T_{M}$ is not $0$ if and only if  the localisation at $T_{M}$ of a character  of $\mathcal Z (G(F),K,V)$ contained  in $H_{V}$ is not $0$.

The characters of $\mathcal Z (G(F),K,V)$ contained in $H_{V}$ are the central characters of the simple $\mathcal H (G(F),K,V)$-submodules of $H_{V}$.

 The  localisation  at $T_{M}$ of a simple $\mathcal H (G(F),K,V)$-submodule is not $0$ if and only if  the localisation  at $T_{M}$  of its central character is not $0$.

  \end{proof}

  Herzig and Abe when $G$ is $F$-split, $K$ is hyperspecial and the characteristic of $F$ is $0$ (\cite{Her} Lemma 9.9),  used the property  iii to define the $K$-supersingularity of $\pi$ irreducible  and admissible.

\noindent Guy Henniart\\
Univ. Paris--Sud, Laboratoire de Math\'ematiques d'Orsay\\
Orsay Cedex F--91405 ; CNRS, UMR 8628, Orsay Cedex F--91405\\
Guy.Henniart@math.u-psud.fr

\vskip 4mm
\noindent Vign\'eras Marie-France\\
Universit\'e de Paris 7, Institut de Mathematiques de Jussieu, 175 rue du Chevaleret, Paris 75013, France, \\
 vigneras@math.jussieu.fr


\begin{thebibliography}{CFKSV}



 \bibitem[Abe]{Abe}  Abe Noriyuki : \emph{On a classification of admissible irreducible  modulo $p$ representations of a $p$-adic split reductive group}.  
Preprint 2011.

 \bibitem[Ramla]{Ramla}  Abdellatif Ramla: \emph{Autour des repr\'esentations modulo $p$ des groupes r\'eductifs $p$-adiques de rang $1$.} Thesis in preparation.  

\bibitem[BL]{BL}  Barthel Laure and Livne Ron : \emph{Irreducible modular representations of GL2 of a local field}.  Duke Math. J. Volume 75, Number 2 (1994), 261-292. 

\bibitem[Bki]{Bki}  Bourbaki Nicolas : \emph{Groupes et alg\`ebres de Lie, chapitres 4,5 et 6}.
Hermann 1968.

 \bibitem[BTII]{BTII} Bruhat F. et Tits J. :  \emph{Groupes r\'eductifs sur un corps local}. Inst. Hautes \'Etudes Scient. Publications Math\'ematiques  Vol. 60 (1984), part II, pp. 197-376.

\bibitem[CE]{CE} Cabanes Marc and Enguehard Michel :  \emph{Representation theory of finite reductive groups.
} Cambridge University Press 2004.

\bibitem[Curtis]{Curtis} Curtis C. W.  :  \emph{Modular representations of finite groups with split $(B,N)$-pairs.} In \emph{Seminar on Algebraic groups and related finite groups} Lecture Notes in Math, 131, Springer-Verlag 1970, Chapter B.

\bibitem[Emerton]{Emerton} Emerton Matthew  :  \emph{Ordinary parts of admissible representations of $p$-adic reductive groups I.}
Ast\'erisque 331, 2010, p. 355-402.

 
\bibitem[HV]{HV} Henniart Guy and Vigneras Marie-France : \emph{A Satake isomorphism for representations modulo $p$ of reductive groups over local fields.}
Preprint 2011.

\bibitem[Herzig]{Her}  Herzig Florian : \emph{The classification of admissible irreducible  modulo $p$ representations of a $p$-adic $GL_{n}$}.  To appear in Inventiones Math.

\bibitem[HerzigW]{HerW}  Herzig Florian : \emph{The  weight in a Serre-type conjecture for tame $n$-dimensional Galois represetnations}.  Duke Math. J.  149 (1): 37-116, 2009.

\bibitem[HS]{HS}  Hilton P.J., Stammbach U. : \emph{A Course in Homological Algebra}.  GTM 4 , 1971. Springer-Verlag

\bibitem[Ly]{Tony} Ly Tony : \emph{Irreducible representations modulo $p$ representations of $GL(2,D)$.} In preparation.

\bibitem[VigD]{VigD} Vigneras Marie-France : \emph{Repr\'esentations irr\'eductibles de $GL(2,F)$ modulo $p$.}   In L-functions and Galois representations, ed. Burns, Buzzard, Nekovar, LMS Lecture Notes 320 (2007)

\bibitem[VLivre]{VLivre} Vign\'eras Marie-France :  \emph{Repr\'esentations $\ell$-modulaires d'un groupe r\'eductif $p$-adique avec $\ell \neq p$}. PM 137. Birkhauser (1996).

 
 \end{thebibliography}
\end{document}